\documentclass[10pt]{article}
\DeclareMathSizes{10}{9}{7}{5}
\usepackage{amsmath,amssymb,amsthm,amsfonts}
\usepackage{bm}
\usepackage{mathrsfs}
\usepackage{graphicx}
\usepackage{mathtools}
\usepackage{braket}
\usepackage{comment}
\usepackage{enumitem}
\setlist[enumerate,1]{label=(\roman*),font=\upshape}
\usepackage{indentfirst}
\usepackage{booktabs}
\usepackage{placeins}
\usepackage{xurl}
\usepackage{authblk}
\usepackage[hidelinks]{hyperref}
\usepackage{bookmark}
\usepackage{cleveref}

\theoremstyle{plain}
\newtheorem{thm}{Theorem}[section]
\newtheorem{pro}{Proposition}[section]
\newtheorem{lem}{Lemma}[section]
\newtheorem{cor}{Corollary}[section]

\theoremstyle{definition}

\newtheorem{hyp}{Assumption}

\newcommand{\hypref}[1]{\textup{\ref{#1}}}
\newtheorem{rem}{Remark}[section]
\numberwithin{equation}{section}

\title{A Quasi Maximum Likelihood Estimation Method for Bergomi-Type Volatility Models}

\author[1]{Masaaki Fukasawa\thanks{Supported by KAKENHI, Grant Number 21K03369.}}
\author[1]{Haruki Tomita\thanks{Supported by JST SPRING, Grant Number JPMJSP2138.}}

\affil[1]{Graduate School of Engineering Science, The University of Osaka}

\date{}

\begin{document}
\maketitle
\section{Introduction}
Modeling the dynamics of volatility is a central issue in mathematical finance, both for derivative pricing and for the statistical analysis of option markets. 
A particularly important class of models is given by forward variance models, in which the dynamics of the variance curve are specified directly. 
This framework, often referred to as the Bergomi-type volatility framework, is widely used in derivatives modeling because it provides a flexible way to describe the joint behavior of spot volatility, forward variance, and implied volatility surfaces; see, for example, \cite{Bergomi2016}.
In this paper, we develop a model-based statistical methodology for estimating parameters in Bergomi-type volatility models from time-series observations of option prices.

Most existing statistical methods for volatility models are based on time series of the underlying asset or on volatility proxies constructed from high-frequency price data. 
Statistical inference based directly on option-price observations is much less developed. 
In this direction, recent work has developed a model-free method that exploits cross-sectional information from short-dated option portfolios~\cite{10.3150/24-BEJ1762}.
Our approach is different: we use time-series observations of option prices and develop a model-based procedure for estimating the parameters of a Bergomi-type volatility model. 
Thus, our focus is not on cross-sectional option-based inference, but on the high-frequency time-series estimation problem generated by the Bergomi-type dynamics.

To formulate the problem, we consider the following Bergomi-type stochastic volatility model:
\begin{equation}
\label{OWEIrngodfigmoAIBNoisdfbgoiAOINSdf}
\frac{dS_t}{S_t}=\sqrt{V_t}dZ_t,\
\frac{dV_t^u}{V_t^u}=k(\theta,u-t)dW_t,\ t\le u,\
V_t=\lim_{u\downarrow t}V_t^u,
\end{equation}
where \((Z,W)\) is a correlated two-dimensional Brownian motion, \(S_t\) is the discounted asset price, \(V_t^u\) denotes the forward variance with maturity \(u\), and \(V_t\) is the spot variance. 
The kernel \(k\) depends on an unknown parameter \(\theta\), which is the main object of statistical inference. 
It determines how shocks to the Brownian driver propagate across the forward variance curve. 
The flexibility in choosing the parametric form of \(k\) allows Bergomi-type models to capture a wide range of leverage structures, which is one of the reasons why they are well known in financial practice.

A key role in our analysis is played by the cumulative forward variance $I_t^T \coloneqq \int_t^T V_t^u\,du$.
It is known that this quantity admits the option-price representation
\[
I_t^{T}
=
2\int_0^{S_t}
E\!\left[(K-S_T)_+ \mid \mathscr{F}_t\right]\frac{dK}{K^2}
+
2\int_{S_t}^{\infty}
E\!\left[(S_T-K)_+ \mid \mathscr{F}_t\right]\frac{dK}{K^2}.
\]
Hence, at least conceptually, time-series observations of option prices allow us to recover discrete observations 
\(\{I_{t_i}^{T_j}\}\) of the cumulative forward variance. 
In actual market data, however, this reconstruction is not straightforward, due for example to market microstructure noise, the discreteness of available strikes, and the finite range of traded strikes. 
Therefore, in empirical applications we construct discrete observations of cumulative forward variance from option prices using the method of~\cite{fukasawa-murayama}.

Our first contribution is to show that the process \(I_t^T\) solves an infinite-dimensional stochastic differential equation:
\[
I_t^T
=
I_0^T-\int_0^t V_s\,ds+\int_0^t \sigma_s^T(\theta)\,dW_s,\qquad t<T,
\]
where for $t <x$, $\sigma_t^x(\theta)
\coloneqq
\int_t^x k(\theta,u-t)(dI_t^u/du)\,du$.
Since \(I_t^T\) can be reconstructed, at least approximately, from observed option prices, a finite collection of observations 
\(\{I_{t_i}^{T_j}\}\) can be regarded as market-based observations of this infinite-dimensional state process. 
This provides a route to statistical inference on the kernel parameter \(\theta\). 
At the same time, the resulting inference problem is highly nonstandard. 
The state equation is infinite-dimensional, whereas the driving Brownian motion is one-dimensional. 
Thus, the induced conditional covariance structure is highly degenerate. 
To the best of our knowledge, existing statistical theory does not directly address parameter estimation for such infinite-dimensional SDEs with rank-degenerate noise. 
Yet this type of equation arises naturally when one describes the time evolution of cumulative forward variance under a Bergomi-type model.

The statistical problem considered here is related in spirit to the parametric inference theory for discretely observed diffusion processes. 
For finite-dimensional diffusion processes, estimation from discrete and high-frequency observations has been extensively studied; see, for example, \cite{Yoshida1992,genon1993estimation,Kessler1997}. 
However, the present setting differs substantially from the classical finite-dimensional case. 
The observations are indexed both by time and maturity, and the effective noise dimension is much smaller than the dimension of the observed maturity vector. 
This degeneracy is a defining feature of the problem and prevents a direct application of classical inference methods.

To separate the statistical difficulty caused by this degenerate observation structure from additional analytical complications caused by singular kernels, such as the power-law kernel appearing in the rough Bergomi model~\cite{Bayer02062016}, we first develop the theory for a regular class of kernels. 
More precisely, we assume
$k\in C^{2,2}(\Theta\times[0,1])$,
$k>0$,
and consider a general parameter \(\theta\in\Theta\). 
This regular framework allows us to establish the core inferential theory.

Following the discretization strategy of~\cite{genon1993estimation}, we discretize the above infinite-dimensional SDE simultaneously in time and maturity over grids
$t_i=i\Delta$, 
$\Delta=1/n$,
$i=0,\ldots,n$,
$0=T_0<\cdots<T_d\le1$.
This yields an approximation in which, for each \(i\), the increment vector
$\Delta I_i^d
\coloneqq
[(I_{t_i}^{T_j}-I_{t_{i-1}}^{T_j})/{\sqrt{\Delta}}]_{j}$
satisfies
$
(I_{t_i}^{T_j}-I_{t_{i-1}}^{T_j})/\sqrt{\Delta}
\approx
\sigma_{t_{i-1}}^{T_j}(\theta)\Delta W_i / \sqrt{\Delta}.$
Thus, conditionally on the past, \(\Delta I_i^d\) behaves like a Gaussian vector with covariance matrix of the form \(\sigma\sigma^\mathsf{T}\) for some vector \(\sigma\in\mathbb{R}^{d\times1}\). 
Since this covariance matrix has rank one, the conditional Gaussian distribution is supported only on the one-dimensional space spanned by \(\sigma\), while the remaining \(d-1\) directions are degenerate. 
Consequently, a standard maximum likelihood approach cannot be applied directly.

To overcome this degeneracy, we introduce a nondegenerate proxy distribution obtained by redistributing the variance carried by the direction \(\mathrm{span}(\sigma)\) equally over the \(d\) maturity coordinates. 
We then define the estimating function as the log-likelihood associated with this proxy distribution:
\[
\begin{split}
U_{n,d}^{\epsilon}(\xi)
&\coloneqq
\frac{1}{n}\sum_{i=1}^n
\left[
\log\left(\frac{|\hat{\sigma}_{t_{i-1}}(\xi)|^2}{d}+\epsilon\right)
+
\frac{|\Delta I_i^d|^2+\epsilon d}
{|\hat{\sigma}_{t_{i-1}}(\xi)|^2+\epsilon d}
\right],
\qquad \epsilon>0,\\
\hat{\sigma}_t^j(\xi)
&\coloneqq
\sum_{l=1}^j
k(\xi,T_l-t)
\left(I_t^{T_l}-I_t^{T_{l-1}}\right)
\approx
\int_t^{T_j}
k(\xi,u-t)\frac{dI_t^u}{du}\,du
=
\sigma_t^{T_j}(\xi).
\end{split}
\]
The estimator is defined as a minimizer of this estimating function, and the main results of this paper establish its consistency and asymptotic mixed normality.

The final part of the paper examines the finite-sample behavior of the proposed method and applies it to SPXW option data. In the empirical analysis, we consider a shifted power-law kernel of the form
$k(t)=\eta(t+c)^{H-1/2},\ \eta>0,\ H\in\mathbb{R}$,
and estimate the kernel parameters $(\eta,H)$.
This specification is motivated by recent work on path-dependent volatility modeling, where shifted power-law kernels have been used to capture the memory structure of volatility; see, for example, \cite{AbiJaberLi2025,Guyon02092023,GuyonElAmrani2023}.

Section~\ref{DOfgindoghimOSINDoidfgh} states the main results. 
Section~\ref{OetidftghpmOAINsdofig} develops the key inequalities required for the proofs. 
Sections~\ref{02} and~\ref{OeigoainOISFDMgodfighoISB} establish consistency and asymptotic mixed normality, respectively. 
Finally, Section~\ref{eoitrgnoIAnoewirgnoISANOSIDFB} reports simulation results and empirical estimates based on market option data.
\section{Statement of Results}
\label{DOfgindoghimOSINDoidfgh}
\subsection{Notation and Assumptions.} We first describe the general setting.\ Fix $q \geq 1$ arbitrarily. The parameter space is a compact convex subset $\Theta$ of $\mathbb{R}^q$.\ Let $(\Omega,\mathscr{F},(\mathscr{F}_t),P)$ be a filtered probability space, and $W$ be a one-dimensional $(\mathscr{F}_t)$-standard Brownian motion.\ For simplicity, we assume that the kernel $k$ satisfies the following.  
\begin{hyp}
\label{EIrugnoIASNOsirgmoiNAOISmfgoiN}
    $k \in C^{2,2}(\Theta \times [0,1])$,\ $k >0$.
\end{hyp}
Furthermore,\ we impose the following identifiability condition,\ which will be required to the proof of consistency.
\begin{hyp}
\label{ERogienthoimOAINEOITRgdnofgnoINWSR}
    $\xi \neq \theta \implies \exists \delta >0$ such that $k(\xi,\cdot) - k(\theta,\cdot)$ has a constant nonzero sign on $(0,\delta)$.
\end{hyp}
Hereafter,\ for any $\alpha=1,\ldots,q$,\ the partial derivative with respect to $\xi_\alpha$ will be denoted by $\partial_\alpha$.\ In addition,\ the following assumption is necessary to prove the asymptotic mixed normality to be discussed later.
\begin{hyp}
\label{ErogisdotihnoiSMDOGFIsnrofgidsnfoginOIW}
    $\forall \theta \in \Theta,\ \forall x \in \mathbb{R}^q,\ x \neq 0,\ \exists \delta >0$ such that $\sum_{\alpha} x_\alpha \partial_\alpha k(\theta,\cdot)$ has a constant nonzero sign on $(0,\delta)$. 
\end{hyp}
\begin{rem}
The shifted power-law kernel
$k(t)=\eta(t+c)^{H-1/2},\ \eta>0,\ H\in\mathbb{R}$,
with a fixed shift parameter \(c>0\) also satisfies \ref{ErogisdotihnoiSMDOGFIsnrofgidsnfoginOIW}. Indeed, we have
$\partial_\eta k(t)=(t+c)^{H-1/2},\
\partial_H k(t)=\partial_\eta k(t)\eta\log(t+c),
$
and for any \(x=(x_1,x_2)\neq0\),
$
x_1\partial_\eta k(t)+x_2\partial_H k(t)
=
(t+c)^{H-1/2}\{x_1+\eta x_2\log(t+c)\}
$.
Since the first factor is strictly positive, the sign is determined by
$g(t) \coloneqq x_1+\eta x_2\log(t+c)$.
If \(g(0)\neq0\), then continuity implies that \(g\) has a constant nonzero sign on \((0,\delta)\) for some sufficiently small \(\delta>0\). 
If \(g(0)=0\), then \(x_2\neq0\) and
$g(t)=\eta x_2\log\left(1+t / c\right)$,
which has the constant nonzero sign of \(x_2\) for all \(t>0\). 
Thus the required sign condition holds.
\end{rem}
Let us fix an arbitrary true value $\theta \in \Theta$,\ and define
\begin{equation}
    \label{IWurgnsitugbiUQNIUBAiunsifgubW}
    V_t^u \coloneqq V_0^u \exp \left(\int_0^t k(\theta,u-s) dW_s -\frac{1}{2} \int_0^t k(\theta,u-s)^2ds\right),\ 0 \leq t \leq u \leq 1.
\end{equation}
Here, suppose that $V_0^\cdot$ is positive,\ non-random and continuous on $[0,1]$.\ Furthermore, we define 
\begin{equation}
    \label{OEIRgndosfihgmfdoghinoIWMOSIngodisghoiNSOIN}
    I_t^T \coloneqq \int_t^T V_t^u du,\ 0 \leq t <T \leq 1,
\end{equation}
and extend the definition by setting $I_t^T =0 ,\ t \geq T$.\ Then the following holds.
\begin{pro}
\label{SOfdignrsothimoISNogisdnfgoimOSINFgoidsfg}
   Under \hypref{EIrugnoIASNOsirgmoiNAOISmfgoiN},\ for each $t < T$, the following holds almost surely.
   \[
    I_t^T =I_0^T -\int_0^t V_s ds + \int_0^t \left(\int_s^T k(\theta,u-s)V_s^u du\right)dW_s,
\]
where $V_t \coloneqq V_t^t$.
\end{pro}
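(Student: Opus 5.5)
The plan is to obtain the dynamics of $I_t^T$ by writing $V_t^u$ in integrated form, integrating over $u\in[t,T]$, and exchanging the $du$ and $dW_s$ integrals with a stochastic Fubini theorem. By It\^o's formula applied to \eqref{IWurgnsitugbiUQNIUBAiunsifgubW}, for each fixed $u\in[0,1]$ the process $t\mapsto V_t^u$, $t\le u$, satisfies
\[
V_t^u = V_0^u + \int_0^t k(\theta,u-s)V_s^u\,dW_s,\qquad 0\le t\le u .
\]
The moving lower limit is the only real difficulty, since $I_t^T=\int_t^T V_t^u\,du$. I would handle it by splitting
\[
I_t^T = \int_t^T V_0^u\,du + \int_t^T\!\int_0^t k(\theta,u-s)V_s^u\,dW_s\,du .
\]
I would then rewrite the second term as $\int_t^T\int_0^u(\cdots)\,dW_s\,du-\int_t^T\int_t^u(\cdots)\,dW_s\,du$. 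A cleaner route is to write $\int_0^t\cdots\,dW_s = V_t^u-V_0^u$ and work on the full square.

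Concretely, set $f(s,u)=k(\theta,u-s)V_s^u\mathbf 1_{\{s\le u\}}$. Then $V_{t}^{u}=V_0^u+\int_0^{t\wedge u} f(s,u)\,dW_s$ holds for every $u$; for $u<t$ this is $V_u^u$. Integrating over $u\in[0,T]$ and applying stochastic Fubini gives
\[
\int_0^T V_{t\wedge u}^{u}\,du=\int_0^T V_0^u\,du+\int_0^t\Big(\int_s^T k(\theta,u-s)V_s^u\,du\Big)dW_s .
\]
The left side splits as $\int_0^t V_u\,du + \int_t^T V_t^u\,du=\int_0^t V_s\,ds+I_t^T$. Rearranging then yields exactly the claimed identity, with $I_0^T=\int_0^TV_0^u\,du$. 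This decomposition by $t\wedge u$ is the key trick: it turns the moving boundary into a deterministic domain.

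The remaining work is justifying stochastic Fubini and the measurability of the relevant objects.
\begin{enumerate}
\item I need a version of $(s,u,\omega)\mapsto f(s,u)$ that is $\mathscr{P}\otimes\mathscr B$-measurable. This is immediate because $V_s^u$ is given by the explicit formula \eqref{IWurgnsitugbiUQNIUBAiunsifgubW}. The stochastic integral $\int_0^s k(\theta,u-r)\,dW_r$ can be taken jointly continuous in $(s,u)$, either by integration by parts using $k\in C^{2,2}$ or by Kolmogorov's criterion.
\item I need the integrability condition $\int_0^T\big(E\int_0^T f(s,u)^2ds\big)^{1/2}du<\infty$, or simply $E\int\!\!\int f^2<\infty$. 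This follows because $k$ is bounded on $[0,1]$ by Assumption~\hypref{EIrugnoIASNOsirgmoiNAOISmfgoiN}, $V_0^\cdot$ is bounded, and $E[(V_s^u)^2]=(V_0^u)^2\exp(\int_0^s k^2)$ is uniformly bounded.
\item I also need the identification $V_u^u=V_u$, which is the definition of $V_t$ given in the statement.
\end{enumerate}
I expect the Fubini justification to be the main (though routine) obstacle. Under the boundedness of $k$ it reduces to the moment bound above, so the identity then holds almost surely for each $t<T$.
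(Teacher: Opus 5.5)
Your proposal is correct and follows essentially the same route as the paper: write $V_{t\wedge u}^u - V_0^u = \int_0^{t\wedge u} k(\theta,u-s)V_s^u\,dW_s$, integrate over $u\in[0,T]$, swap the integrals by stochastic Fubini, and split the left side at $u=t$. The only difference is the justification of Fubini: the paper invokes Lemma~\ref{OWImeorignOIAMOIMSdfkLQ} (a supremum of $V$ with moments of all orders), while you use the explicit second-moment bound $E[(V_s^u)^2]=(V_0^u)^2\exp\bigl(\int_0^s k(\theta,u-r)^2dr\bigr)$, and both suffice.
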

\noindent The proof is given in Section~\ref{OetidftghpmOAINsdofig}.

We then define 
\begin{equation}
    \label{OERignrotihmoIAMOWINrogidmfgoiNABOIB}
    \sigma_t^x(\xi) \coloneqq \int_t^x k(\xi,u-t) V_t^u du,\ 0 \leq t<x \leq 1,\ \xi \in \Theta,
\end{equation}
here we set $\sigma_t^x(\xi) =0,\ t \geq x$.\ We now define the partition of the time  by $\Delta \coloneqq 1/n,\ t_i \coloneqq i\Delta$ and suppose the partition of the dimension as follows.
\begin{hyp}
\label{EORigndgfhoimOAINWOirngdofgimOASINSOfiogm}
Assume that $0=T_0 <T_1 <\cdots <T_d \leq 1$, and there exists a measure $\mu$ having positive density,\ the probability measures $\mu_d \coloneqq \frac{1}{d}\sum_{j=1}^d \delta_{T_j}$ on $[0,1]$ weakly converge to $\mu$.
\end{hyp}
For the proof of asymptotic mixed normality,\ we further need the subsequent assumption.
\begin{hyp}
\label{OEIrgnrsotihymoIANOisrfg}
    $\max_k |T_k-T_{k-1}| =o(1/\sqrt{n})$.\ In particular,\ for the uniform partition,\ $\sqrt{n} \ll d$.
\end{hyp}
Accordingly, for $j=1,\ldots,d$, we put the discretization of \eqref{OERignrotihmoIAMOWINrogidmfgoiNABOIB} as 
\begin{equation} 
\label{EOritgnsrotiyhmoiNSOISngodsifhg}
        \hat{\sigma}_t^{j}(\xi) \coloneqq \sum_{l=1}^j k(\xi,T_l-t) (I_t^{T_l}-I_t^{T_{l-1}}).
\end{equation}
\subsection{Convergence of Contrast Functions.}For $\Delta I_i^d \coloneqq [(I_{t_i}^{T_j}-I_{t_{i-1}}^{T_j})/\sqrt{\Delta}]_j$ and $\epsilon >0$, we define the estimating function $U_{n,d}^\epsilon$ as 
\begin{equation}
\label{WRgudgtnhnOAIdnfozdifgNSODIndfhg}
    \begin{split}
U_{n,d}^\epsilon(\xi) &\coloneqq \frac{1}{n} \sum_{i=1}^n F_\epsilon(\xi,t_{i-1},\Delta I_i^d),\\
F_\epsilon(\xi,t,x) &\coloneqq \log\left(\frac{|\hat{\sigma}_t(\xi)|^2}{d}+\epsilon\right) + \frac{|x|^2+\epsilon d}{|\hat{\sigma}_t(\xi)|^2+\epsilon d} .
\end{split}
\end{equation}
Furthermore,\ for $\sigma_t^d(\xi)\coloneqq [\sigma_t^{T_j}(\xi)]_j$, we set 
\[
    U_{t,d}^\epsilon(\xi)\coloneqq F_\epsilon(\xi,t,\sigma_t^d(\theta)),\ U_d^\epsilon(\xi) \coloneqq \int_0^1 U_{t,d}^\epsilon(\xi)dt,
\]
and define 
   \begin{equation}
    \label{WOIrgneotigmOIQOEIrgoiMASOIDSg}
    U^\epsilon(\xi) \coloneqq \int_0^1 \log\left(\int_0^1 \sigma_t^x(\xi)^2 \mu(dx)+\epsilon\right)+\frac{\int_0^1 \sigma_t^x(\theta)^2\mu(dx)+\epsilon}{\int_0^1 \sigma_t^x (\xi)^2 \mu(dx)+\epsilon} dt.
\end{equation}
Then the following statement holds.
\begin{thm}
\label{EORigeothimoISNAOFISNrogsdifghaosfigneso}
    Under~\hypref{EIrugnoIASNOsirgmoiNAOISmfgoiN} and \hypref{EORigndgfhoimOAINWOirngdofgimOASINSOfiogm},\ for any $\epsilon >0$,\ $U_{n,d}^\epsilon$ converges uniformly in probability to $U^\epsilon$ on $\Theta$ as $n,d \to \infty$.
\end{thm}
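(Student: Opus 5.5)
We split the difference $U_{n,d}^\epsilon-U^\epsilon$ into three pieces,
\[
U_{n,d}^\epsilon-U^\epsilon=(U_{n,d}^\epsilon-\tilde U_{n,d}^\epsilon)+(\tilde U_{n,d}^\epsilon-U_d^\epsilon)+(U_d^\epsilon-U^\epsilon),
\]
where $\tilde U_{n,d}^\epsilon(\xi)\coloneqq \frac1n\sum_i F_\epsilon(\xi,t_{i-1},\sigma^d_{t_{i-1}}(\theta)\Delta W_i/\sqrt\Delta)$. Each piece is treated uniformly in $\xi\in\Theta$.

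Everything rests on the fact that $F_\epsilon$ is very well behaved in the normalized quantities. Write $a=|\hat\sigma_t(\xi)|^2/d$ and $b=|x|^2/d$. Then
\[
F_\epsilon=\log(a+\epsilon)+\frac{b+\epsilon}{a+\epsilon}.
\]
Since $a,b\ge 0$ and $\epsilon>0$, this expression is Lipschitz in $a$ with constant $\lesssim \epsilon^{-1}(1+(b+\epsilon)/\epsilon)$ and linear in $b$ with coefficient at most $1/\epsilon$. Uniform bounds therefore only require moment bounds on $|\hat\sigma_t(\xi)|^2/d$ and $|x|^2/d$, with no lower bound on the denominators. The moment bounds come from Assumption~\hypref{EIrugnoIASNOsirgmoiNAOISmfgoiN}: $k$ is bounded by $\|k\|_\infty$, and $V_t^u$ is a lognormal martingale with all moments bounded uniformly in $t\le u\le1$. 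Also, $\partial_\xi k$ is bounded, so $\xi\mapsto \hat\sigma_t(\xi)$ is Lipschitz with an integrable random constant. This yields equicontinuity of every term in $\xi$. We can then reduce uniform convergence to pointwise convergence in probability plus tightness, for instance via a finite-net argument or Kolmogorov-type bounds on $\sup_\xi|\partial_\xi U|$.

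\textbf{First piece (discretization).} We replace $\hat\sigma_{t}(\xi)$ by $\sigma_t^d(\xi)$. The difference $\hat\sigma^j_t(\xi)-\sigma^{T_j}_t(\xi)=\sum_l\int_{T_{l-1}\vee t}^{T_l}(k(\xi,T_l-t)-k(\xi,u-t))V_t^u\,du$ has size $O(\max_l|T_l-T_{l-1}|)\sup_u V_t^u$. There is also the boundary cell containing $t$, but it has length at most the mesh. The mesh tends to zero by Assumption~\hypref{EORigndgfhoimOAINWOirngdofgimOASINSOfiogm}, since $\mu$ has positive density. Hence $|\hat\sigma_t|^2/d-|\sigma^d_t|^2/d\to0$ in $L^1$ uniformly. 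Next we replace $\Delta I_i^d$ by $\sigma^d_{t_{i-1}}(\theta)\Delta W_i/\sqrt\Delta$ using Proposition~\ref{SOfdignrsothimoISNogisdnfgoimOSINFgoidsfg}. The increment equals $-\int V_s\,ds/\sqrt\Delta+\int_{t_{i-1}}^{t_i}\sigma_s^{T_j}(\theta)\,dW_s/\sqrt\Delta$. The drift term is $O(\sqrt\Delta)$. For the stochastic term, Burkholder--Davis--Gundy bounds $E|\cdot|^2$ per coordinate by $\Delta^{-1}\int_{t_{i-1}}^{t_i}E|\sigma_s^{T_j}-\sigma_{t_{i-1}}^{T_j}|^2ds$. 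This tends to $0$ by $L^2$-continuity of $s\mapsto \sigma_s^{x}$, except for the coordinates with $T_j\in(t_{i-1},t_i]$. Those coordinates are at most a $\mu_d$-mass of $\mu([t_{i-1},t_i])+o(1)$ and are bounded in $L^2$, so their average contribution vanishes. Since the averages are over $j$ (division by $d$), the errors $\frac1d\sum_j(\cdot)$ tend to $0$ in $L^1$, uniformly in $i$.

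\textbf{Second piece (law of large numbers).} $E[F_\epsilon(\xi,t_{i-1},\sigma^d\Delta W_i/\sqrt\Delta)\mid\mathscr F_{t_{i-1}}]=U^\epsilon_{t_{i-1},d}(\xi)$, because $E[(\Delta W_i)^2/\Delta]=1$ and the conditional mean of $|x|^2$ is $|\sigma^d_{t_{i-1}}(\theta)|^2$. So $\tilde U_{n,d}^\epsilon-\frac1n\sum_iU^\epsilon_{t_{i-1},d}$ is a sum of martingale differences. Its conditional variances are bounded by $C\epsilon^{-2}(|\sigma^d|^2/d)^2$, so its $L^2$ norm is $O(n^{-1/2})$. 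The Riemann sum $\frac1n\sum_iU^\epsilon_{t_{i-1},d}$ converges to $U_d^\epsilon$ in $L^1$ by continuity in $t$. That continuity holds away from the jumps of $\sigma_t^{T_j}$ at $t=T_j$, and those jumps again carry vanishing weight after dividing by $d$.

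\textbf{Third piece ($d\to\infty$).} For fixed $t$ and $\omega$, the map $x\mapsto \sigma_t^x(\xi)^2$ is bounded and continuous on $[0,1]$ except at $x=t$, a single point of $\mu$-measure zero. Weak convergence $\mu_d\to\mu$ therefore gives $\frac1d|\sigma_t^d(\xi)|^2=\int\sigma_t^x(\xi)^2\mu_d(dx)\to\int\sigma_t^x(\xi)^2\mu(dx)$. Equicontinuity in $\xi$ makes this uniform over compact $\Theta$, and dominated convergence in $t$ then gives $U_d^\epsilon\to U^\epsilon$ almost surely, uniformly in $\xi$.

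The main obstacle is the discontinuity of $\sigma_t^{T_j}$ at $t=T_j$. It affects the discretization of the increments, the continuity in $t$ needed for the Riemann sum, and the weak-convergence step. The plan is to handle it throughout by the same bound: the coordinates with $T_j$ within distance $h$ of $t$ have $\mu_d$-mass tending to $\mu([t-h,t+h])$, which is small. This bound needs to be controlled jointly in $n$ and $d$.
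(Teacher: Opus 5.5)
Your proposal is correct. Its third step, weak convergence of $\mu_d$ together with the mesh bound $|\hat\sigma_t^j(\xi)-\sigma_t^{T_j}(\xi)|\le M\max_k|T_k-T_{k-1}|$, is the paper's final step. The difference is in how you pass from $U^\epsilon_{n,d}$ to $U^\epsilon_d$.

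The paper works directly with
\[
\chi^\epsilon_{i,d}(\xi)=\frac{1}{n}\bigl(|\Delta I_i^d|^2-|\sigma^d_{t_{i-1}}(\theta)|^2\bigr)\big/\bigl(|\hat\sigma_{t_{i-1}}(\xi)|^2+\epsilon d\bigr).
\]
It shows that the sums of conditional first and second moments vanish (Lemma~\ref{EOirgrthoimOASINOSdigdghNOSINR}), applies Lemma~9 of \cite{genon1993estimation} pointwise in $\xi$, and gets uniformity from $\sup_{n,d}E\sup_\xi|\nabla_\xi X^\epsilon_{n,d}(\xi)|<\infty$.

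You instead couple $\Delta I_i^d$ in $L^2$ with the Euler increment $\sigma^d_{t_{i-1}}(\theta)\Delta W_i/\sqrt{\Delta}$. Since $F_\epsilon(\xi,t,x)-F_\epsilon(\xi,t,y)=(|x|^2-|y|^2)/(|\hat\sigma_t(\xi)|^2+\epsilon d)$ and the denominator is at least $\epsilon d$, the coupling error is controlled uniformly in $\xi$ at no cost. What remains is an exact martingale with multipliers $(\Delta W_i)^2/\Delta-1$. Pointwise, this needs only an elementary $L^2$ bound instead of the conditional-moment lemma; uniformity in $\xi$ still needs a gradient bound.

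The analytic input is the same in both routes: the $L^2$-continuity of $s\mapsto\sigma_s^x(\theta)$, uniformly in $x$ (Lemma~\ref{EOrignrotihomoISADOISnrgoin}). The paper itself uses your coupling later, for $L_{n,d}^{\epsilon\,\prime}$ in Lemma~\ref{OiegndogihmoiNAOSIDmsr}.

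Two points to tidy up:
\begin{enumerate}
\item For the Riemann-sum step you need a modulus of continuity in $t$ that is uniform in $d$. The paper takes it from Lemma~\ref{EOEITnhgoedtihmoIAOIDGHOIDFNhoiAMOIWS}(ii). You can get it from your mesh bound plus the a.s.\ uniform continuity of $(t,x,\xi)\mapsto\sigma_t^x(\xi)$ on a compact set.
\item Your mesh estimate is actually consumed in the third piece, not the first, since $\tilde U^\epsilon_{n,d}$ and $U^\epsilon_d$ both still contain $\hat\sigma_t(\xi)$.
\end{enumerate}

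One correction: the ``main obstacle'' you name does not exist.
\begin{enumerate}
\item $\sigma_t^{T_j}$ has no jump at $t=T_j$, and $x\mapsto\sigma_t^x(\xi)$ is continuous at $x=t$. This follows from $|\sigma_t^x(\xi)|\le M|x-t|$ (Lemma~\ref{OWImeorignOIAMOIMSdfkLQ}(iii)) and the convention $\sigma_t^x=0$ for $t\ge x$.
\item Likewise $\hat\sigma_t^j(\xi)$ is continuous at $t=T_l$, because $I_t^{T_l}\to 0$ there.
\item For $T_j\in(t_{i-1},t_i]$, both the $j$-th coordinate of $\Delta I_i^d$ and $\sigma^{T_j}_{t_{i-1}}(\theta)\Delta W_i/\sqrt{\Delta}$ are $O(\sqrt{\Delta})$.
\item In any case each $j$ falls in at most one such cell, which is how the paper disposes of these terms.
\end{enumerate}
So you can drop the $\mu_d$-mass bookkeeping, along with the concern about controlling it jointly in $n$ and $d$.
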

\noindent The proof is given in Section~\ref{02}.

Here,\ since $\Theta$ is compact,\ a minimizer of $U_{n,d}^\epsilon(\cdot)$ exists,\ which we denote by $\hat{\theta}_{n,d}^\epsilon$.\ Then we have:
\begin{cor}
\label{EORginseoihmOIANOisrgmoiNAOiedrg}
        Under~\hypref{EIrugnoIASNOsirgmoiNAOISmfgoiN},\ \hypref{ERogienthoimOAINEOITRgdnofgnoINWSR} and \hypref{EORigndgfhoimOAINWOirngdofgimOASINSOfiogm},\ for all $\epsilon >0$,\ $\hat{\theta}_{n,d}^\epsilon$ is weakly consistent for the true value $\theta$.
\end{cor}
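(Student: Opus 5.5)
The proof follows the standard argmin-consistency route. By Theorem~\ref{EORigeothimoISNAOFISNrogsdifghaosfigneso}, $U_{n,d}^\epsilon$ converges uniformly in probability to $U^\epsilon$ on the compact set $\Theta$. It therefore suffices to show two things: that $U^\epsilon$ is continuous on $\Theta$, and that $\theta$ is its unique (hence well-separated) minimizer. Granting both, fix $\eta>0$ and let $\gamma \coloneqq \inf_{|\xi-\theta|\ge\eta}U^\epsilon(\xi)-U^\epsilon(\theta)>0$. Since $\hat\theta_{n,d}^\epsilon$ minimizes $U_{n,d}^\epsilon$,
\[
U^\epsilon(\hat\theta_{n,d}^\epsilon)-U^\epsilon(\theta)\le 2\sup_{\xi\in\Theta}|U_{n,d}^\epsilon(\xi)-U^\epsilon(\xi)|.
\]
Hence $P(|\hat\theta_{n,d}^\epsilon-\theta|\ge\eta)\le P(2\sup|U_{n,d}^\epsilon-U^\epsilon|\ge\gamma)\to0$.

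\emph{Continuity.} Write $A_t(\xi)\coloneqq\int_0^1\sigma_t^x(\xi)^2\mu(dx)$. By \hypref{EIrugnoIASNOsirgmoiNAOISmfgoiN}, $k$ is uniformly continuous and bounded on $\Theta\times[0,1]$, and $u\mapsto V_t^u$ is pathwise continuous, hence bounded. So $\xi\mapsto\sigma_t^x(\xi)$ is continuous, uniformly in $(t,x)$ for each $\omega$. Because the denominators are bounded below by $\epsilon$, the integrand of $U^\epsilon$ is continuous in $\xi$ and bounded on $\Theta$, and dominated convergence gives continuity of $U^\epsilon$ pathwise. Note that $U^\epsilon$ is random. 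The separation argument therefore has to be carried out $\omega$-wise, with $\gamma=\gamma(\omega)>0$ a.s.; the bound above then still goes to zero because $P(2\sup|\cdot|\ge\gamma(\omega))\to0$ for an a.s. positive random $\gamma$.

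\emph{Identifiability.} For $a,b>0$ the function $\log a+b/a$ is minimized uniquely at $a=b$, with excess $\phi(b/a)\coloneqq b/a-1-\log(b/a)\ge0$. Thus
\[
U^\epsilon(\xi)-U^\epsilon(\theta)=\int_0^1\phi\left(\frac{A_t(\theta)+\epsilon}{A_t(\xi)+\epsilon}\right)dt\ge0,
\]
with equality iff $A_t(\xi)=A_t(\theta)$ for a.e.\ $t$, and then for all $t$ by continuity. The main obstacle is to show that this equality forces $\xi=\theta$, using \hypref{ERogienthoimOAINEOITRgdnofgnoINWSR}.

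The plan is to look at $x\downarrow t$. Suppose $\xi\ne\theta$, and choose $\delta$ with, say, $k(\xi,\cdot)>k(\theta,\cdot)$ on $(0,\delta)$. Since $V_t^u>0$, for $t<x<t+\delta$ we get
\[
\sigma_t^x(\xi)=\int_t^x k(\xi,u-t)V_t^u\,du>\sigma_t^x(\theta)>0,
\]
so $\sigma_t^x(\xi)^2>\sigma_t^x(\theta)^2$ on $x\in(t,t+\delta)$. Because $\mu$ has positive density, this interval carries positive mass. However, for $x\ge t+\delta$ the sign may flip, so strict inequality of the $\mu$-integrals does not follow immediately. I would handle this by using the whole path in $t$: take $t$ close to $1$, say $t\in(1-\delta,1)$. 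Since $\sigma_t^x=0$ for $x\le t$, the $\mu$-integral only involves $x\in(t,1]\subset(t,t+\delta)$, where $\sigma_t^x(\xi)^2>\sigma_t^x(\theta)^2$. Hence $A_t(\xi)>A_t(\theta)$ on a set of $t$ of positive Lebesgue measure, and $U^\epsilon(\xi)>U^\epsilon(\theta)$ for every $\omega$.

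Combining this with the compactness of $\Theta$ and the continuity of $U^\epsilon$ gives $\gamma(\omega)>0$ for each $\eta>0$, which completes the argument.
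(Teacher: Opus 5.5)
Your proposal is correct and follows essentially the same route as the paper: uniform convergence from Theorem~\ref{EORigeothimoISNAOFISNrogsdifghaosfigneso}, then unique minimization of $\log a+b/a$ at $a=b$, then the identifiability assumption applied at times $t\in(1-\delta,1)$, where only $x\in(t,1]\subset(t,t+\delta)$ contribute. The differences are cosmetic: you compare $0<\sigma_t^x(\theta)<\sigma_t^x(\xi)$ (or the reverse) pointwise in $x$, whereas the paper writes the difference via Fubini as $\int_0^{1-t}(k(\xi,r)-k(\theta,r))q(t,r)\,dr$ with $q>0$; you also make explicit the continuity of $U^\epsilon$ and the $\omega$-wise well-separation step that the paper leaves implicit.
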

\begin{rem}
    In the above,\ we have implicitly fixed the equivalent martingale measure as the probability measure $P$.\ However,\ since convergence in probability is preserved under any equivalent measure,\ the above statement also holds under the original probability measure.
\end{rem}
\subsection{Asymptotic Mixed Normality.}For $\epsilon  \geq 0$ and $\xi \in \Theta$,\ we define non-negative definite $q \times q$ random matrices $B^\epsilon(\xi)$ and $D^\epsilon(\xi)$:
\begin{equation}
\label{ErogirethoimOIWNOEIrgndofighoIASN}
   \begin{split}
    &B^\epsilon(\xi) \coloneqq  \int_0^1y_t^\epsilon(\xi) E_t(\xi) dt,\ D^\epsilon(\xi) \coloneqq \int_0^1  z_t^\epsilon(\xi) E_t(\xi) dt,\\
    &E_t(\xi) \coloneqq e_t(\xi) e_t(\xi)^\mathsf{T},\ e_t(\xi) \coloneqq \left[\int_0^1 \partial_\alpha \sigma_t^x(\xi)\sigma_t^x(\xi)\mu(dx)\right]_{\alpha},\\
    &y_t^\epsilon(\xi) \coloneqq \frac{4}{\left(\int_0^1 \sigma_t^x(\xi)^2\mu(dx)+\epsilon\right)^2},\ z_t^\epsilon(\xi) \coloneqq \frac{8\left(\int_0^1 \sigma_t^x(\xi)^2\mu(dx)\right)^2}{\left(\int_0^1 \sigma_t^x(\xi)^2\mu(dx)+\epsilon\right)^4}.
\end{split} 
\end{equation}
For simplicity,\ we write $B^\epsilon$ and $D^\epsilon$ for $B^\epsilon(\theta)$ and $D^\epsilon(\theta)$,\ respectively.
As proved in Section~\ref{OeigoainOISFDMgodfighoISB},\ under \hypref{EIrugnoIASNOsirgmoiNAOISmfgoiN},\ \hypref{ErogisdotihnoiSMDOGFIsnrofgidsnfoginOIW} and \hypref{EORigndgfhoimOAINWOirngdofgimOASINSOfiogm},\ for any $\epsilon \geq 0$,\ $B^\epsilon$ and $D^\epsilon$ are $P$-a.s. invertible.
\begin{thm}
\label{OWIRgnosdifgnoIAMOSIdf}
Assume~\hypref{EIrugnoIASNOsirgmoiNAOISmfgoiN}--\hypref{OEIrgnrsotihymoIANOisrfg}.\ Let $\theta$ be an interior point of $\Theta$ and let $\mathscr{F}^I \coloneqq \sigma(I_t^T,\ 0 \leq t <T \leq 1)$.\ Then for all $\epsilon >0$,\ $\sqrt{n}(\hat{\theta}_{n,d}^\epsilon-\theta)$ $\mathscr{F}^I$-stably converges in law to  $S^\epsilon$, where $S^\epsilon$ is defined on an extension of the space $(\Omega,\mathscr{F}^I,P)$ and is,
    conditionally on $\mathscr{F}^I$,\ centered Gaussian with covariance matrix
    \[
        \Gamma^\epsilon \coloneqq (B^\epsilon)^{-1} D^\epsilon  (B^\epsilon)^{-1}.
    \]
\end{thm}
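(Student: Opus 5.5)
The plan is the standard M-estimator route: Taylor-expand the score around $\theta$, prove an $\mathscr{F}^I$-stable central limit theorem for the normalized score, and show that the Hessian converges to $B^\epsilon$. Consistency comes from Corollary~\ref{EORginseoihmOIANOisrgmoiNAOiedrg}, and $\theta$ is interior. So with probability tending to one $\partial_\xi U_{n,d}^\epsilon(\hat\theta_{n,d}^\epsilon)=0$. The mean value theorem on the convex set $\Theta$ then gives
\[
0=\sqrt{n}\,\partial_\xi U_{n,d}^\epsilon(\theta)+\left(\int_0^1\partial_\xi^2U_{n,d}^\epsilon(\theta+s(\hat\theta_{n,d}^\epsilon-\theta))\,ds\right)\sqrt{n}(\hat\theta_{n,d}^\epsilon-\theta).
\]
It therefore suffices to prove two things. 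First, $\sup_{|\xi-\theta|\le\delta_n}|\partial_\xi^2U_{n,d}^\epsilon(\xi)-2B^\epsilon|\to0$ in probability for any $\delta_n\to0$; the factor $2$ or other normalization is absorbed consistently below. Second, $\sqrt n\,\partial_\xi U_{n,d}^\epsilon(\theta)\to N$ $\mathscr{F}^I$-stably, where $N$ is conditionally Gaussian with covariance $4D^\epsilon$, normalized so that the sandwich gives $(B^\epsilon)^{-1}D^\epsilon(B^\epsilon)^{-1}$. Invertibility of $B^\epsilon$ holds a.s., as stated before the theorem.

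\emph{Hessian.} Write $a_t(\xi)=|\hat\sigma_t(\xi)|^2/d$ and $b_i=|\Delta I_i^d|^2/d$. Then $F_\epsilon=\log(a+\epsilon)+(b+\epsilon)/(a+\epsilon)$. Differentiating twice and evaluating near $\theta$, the terms proportional to $(b_i-a_{t_{i-1}}(\theta))$ vanish in the limit. What remains is
\[
\frac1n\sum_i\frac{\partial_\alpha a\,\partial_\beta a}{(a+\epsilon)^2}\to\int_0^1\frac{4e_t^\alpha e_t^\beta}{(\int\sigma^2d\mu+\epsilon)^2}dt .
\]
This uses $\partial_\alpha a_t\to 2\int\partial_\alpha\sigma_t^x\sigma_t^x\mu(dx)$, which follows from the discretization estimates of Section~\ref{OetidftghpmOAINsdofig} and the weak convergence in \hypref{EORigndgfhoimOAINWOirngdofgimOASINSOfiogm}. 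The same uniform-convergence machinery as in Theorem~\ref{EORigeothimoISNAOFISNrogsdifghaosfigneso}, now applied to second derivatives and using $k\in C^{2,2}$, gives uniformity in $\xi$.

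\emph{Score.} We have $\partial_\alpha F_\epsilon(\theta)=\partial_\alpha a\,(a-b)/(a+\epsilon)^2$. I will decompose $\Delta I_i^d=\hat\sigma_{t_{i-1}}(\theta)\,\Delta W_i/\sqrt\Delta+R_i$. Here $R_i$ collects three errors: the drift $-\int V_sds/\sqrt\Delta=O(\sqrt\Delta)$; the time variation $\sigma_s-\sigma_{t_{i-1}}$, which is $O(\Delta)$ in $L^2$ after scaling; and the maturity-discretization error $\sigma_t^{T_j}-\hat\sigma_t^j=O(\max_k|T_k-T_{k-1}|)$. By \hypref{OEIrgnrsotihymoIANOisrfg} the last one is $o(1/\sqrt n)$, and this is exactly why that assumption is needed. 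The main term is then $b_i-a_{t_{i-1}}=a_{t_{i-1}}((\Delta W_i)^2/\Delta-1)+\text{cross and }R\text{ terms}$. Hence
\[
\sqrt n\,\partial_\alpha U=\frac{1}{\sqrt n}\sum_i\frac{-\partial_\alpha a\,a}{(a+\epsilon)^2}\left(\frac{\Delta W_i^2}{\Delta}-1\right)+o_P(1),
\]
a sum of martingale differences. Its conditional variance is $\frac1n\sum_i 2a^2\partial_\alpha a\partial_\beta a/(a+\epsilon)^4\to\int 8(\int\sigma^2d\mu)^2e_te_t^\mathsf{T}/(\int\sigma^2d\mu+\epsilon)^4dt=D^\epsilon$. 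The conditional third moments are of order $\sum_i E[(\Delta W_i^2/\Delta-1)\Delta W_i]=0$, the conditional Lindeberg condition holds, and the cross-bracket with any bounded martingale orthogonal to $W$ vanishes. Jacod's stable CLT then yields $\mathscr{F}$-stable convergence, hence $\mathscr{F}^I$-stable convergence, to the mixed normal limit. Combining this with the Hessian limit, $\sqrt n(\hat\theta-\theta)=-(B^\epsilon)^{-1}N+o_P(1)$, and stability lets us multiply by the $\mathscr{F}^I$-measurable $(B^\epsilon)^{-1}$, which gives covariance $\Gamma^\epsilon$.

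The main obstacle is showing that the remainder terms are $o_P(1)$ after the $\sqrt n$ scaling. The summed drift contributions and the cross terms $\hat\sigma\Delta W\cdot R_i$ must be shown to be either martingale-like or $o(1/\sqrt n)$ uniformly over the $d\to\infty$ coordinates, uniformly in $i$. In particular, the drift-induced bias $\frac{1}{\sqrt n}\sum_i O(\sqrt\Delta)\cdot\Delta W_i/\sqrt\Delta$ is a martingale, and the bias $\frac1{\sqrt n}\sum_iO(\Delta)$ vanishes. My first attempt at controlling the maturity error will be the Lipschitz bound $|\hat\sigma_t^j-\sigma_t^{T_j}|\le C\max_k|T_k-T_{k-1}|\sup_u V_t^u$, combined with moment bounds on $\sup_{t,u}V_t^u$ from \eqref{IWurgnsitugbiUQNIUBAiunsifgubW}.
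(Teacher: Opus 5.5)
Your overall route is the paper's: a Taylor expansion on the event where $\hat\theta_{n,d}^\epsilon$ is interior, convergence of the Hessian, Jacod's stable CLT for the score, \hypref{OEIrgnrsotihymoIANOisrfg} to remove a bias of order $\sqrt n\max_k|T_k-T_{k-1}|$, and $\mathscr F^I$-measurability of $B^\epsilon,D^\epsilon$. You assert that last point; the paper obtains it by writing $\sigma_t^x$ in terms of $u\mapsto I_t^u$ via integration by parts. You center at $\hat\sigma_{t_{i-1}}\Delta W_i/\sqrt\Delta$ instead of the paper's $\Delta J_i^d=\sigma^d_{t_{i-1}}\Delta W_i/\sqrt\Delta$. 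This only moves the maturity-discretization bias from Jacod's condition (i) into your remainder, where your Lipschitz bound handles it correctly. Your explicit computations give the Hessian limit $B^\epsilon$ and the score covariance $D^\epsilon$ exactly, so the ``$2B^\epsilon$, $4D^\epsilon$'' hedge is unnecessary.

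The gap is in the step you yourself call the main obstacle, which is the content of the paper's Lemma~\ref{OiegndogihmoiNAOSIDmsr}. After the $\frac{1}{\sqrt n}\sum_i$ scaling, the drift and time-variation parts of the cross terms $\hat\sigma_{t_{i-1}}^\mathsf{T}R_i\,\Delta W_i/(d\sqrt\Delta)$ are only $O(1)$ under crude $L^1$ bounds. Everything therefore hinges on their $\mathscr F_{t_{i-1}}$-conditional means, and your sketch does not control these.

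\emph{Drift part.} Here $\int_{t_{i-1}}^{t_i}V_s\,ds$ is not $\mathscr F_{t_{i-1}}$-measurable, so that term is not a martingale as you claim. You need $E_i[\Delta W_i\int_{t_{i-1}}^{t_i}(V_s-V_{t_{i-1}})\,ds]=o(\Delta^{3/2})$ in $L^1$, uniformly in $i$. The paper gets this from the uniform continuity of $V$.

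\emph{Time-variation part.} An $L^2$ estimate on $\sigma_s^{T_j}-\sigma_{t_{i-1}}^{T_j}$ alone only gives $E_i[\Delta W_i\int_{t_{i-1}}^{t_i}(\sigma_s^{T_j}-\sigma_{t_{i-1}}^{T_j})\,dW_s]=E_i[\int_{t_{i-1}}^{t_i}(\sigma_s^{T_j}-\sigma_{t_{i-1}}^{T_j})\,ds]=O(\Delta^{3/2})$. That leaves an $O(1)$ bias after summation. The vanishing bias $\frac{1}{\sqrt n}\sum_iO(\Delta)$ you assert requires that $t\mapsto\sigma_t^x$ be an It\^o process with bounded drift (Lemma~\ref{EOrignrotihomoISADOISnrgoin}), so that this conditional mean is $O(\Delta^2)$.

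With these two inputs, and since the martingale parts have conditional variance $O(\Delta)$, your remainder is indeed $o_P(1)$.

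One smaller point concerns the Hessian. The uniformity argument from Theorem~\ref{EORigeothimoISNAOFISNrogsdifghaosfigneso} bounds $\nabla_\xi$ of the object, which for the Hessian would need third $\xi$-derivatives of $k$. Under \hypref{EIrugnoIASNOsirgmoiNAOISmfgoiN} you should instead use the modulus of continuity of $\partial_\xi^2k$ on a shrinking neighbourhood of $\theta$, as the paper does.
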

\noindent The proof is given in Section~\ref{OeigoainOISFDMgodfighoISB}.

\begin{rem}
    For $\epsilon \geq 0$, we show that $\Gamma^\epsilon \geq \Gamma^0$. Fix $x \in \mathbb{R}^q$, and define $a \coloneqq (B^\epsilon)^{-1} x$ and $b \coloneqq 2(B^0)^{-1}x$. Writing $s_t = \int_0^1 \sigma_t^x(\theta)^2 \mu(dx)$, we have 
    \[x^\mathsf{T} \Gamma^\epsilon x = 8 \int_0^1 \frac{(e_t^\mathsf{T} a)^2 s_t^2}{(s_t+\epsilon)^4} dt,\quad x^\mathsf{T} \Gamma^0 x = 2\int_0^1 \frac{(b^\mathsf{T} e_t)^2}{s_t^2}dt,\]
    while, since $D^0 = 2B^0$, we have
    \[x^\mathsf{T} \Gamma^0 x = b^\mathsf{T} B^\epsilon a = 4 \int_0^1 \frac{b^\mathsf{T} e_te_t^\mathsf{T} a s_t}{s_t(s_t+\epsilon)^2}dt \leq \left( x^\mathsf{T} \Gamma^0 x\right)^{1/2} \left( x^\mathsf{T} \Gamma^\epsilon x\right)^{1/2}.\]
\end{rem}

For $\epsilon  > 0$ and $\xi \in \Theta$,\ we define non-negative definite $q \times q$ random matrices $\hat{B}_{n,d}^\epsilon(\xi)$ and $\hat{D}_{n,d}^\epsilon(\xi)$:
\[
    \begin{split}
    &\hat{B}_{n,d}^\epsilon(\xi) \coloneqq \frac{1}{n} \sum_{i=1}^n \hat{y}_{t_{i-1},d}^\epsilon (\xi) \hat{e}_{t_{i-1},d}(\xi)\hat{e}_{t_{i-1},d}(\xi)^\mathsf{T},\\
    &\hat{D}_{n,d}^\epsilon(\xi) \coloneqq \frac{1}{n} \sum_{i=1}^n \hat{z}_{t_{i-1},d}^\epsilon (\xi) \hat{e}_{t_{i-1},d}(\xi)\hat{e}_{t_{i-1},d}(\xi)^\mathsf{T},\\
    &\hat{e}_{t,d}(\xi) \coloneqq [\partial_\alpha \hat{\sigma}_t(\xi)^\mathsf{T} \hat{\sigma}_t(\xi)/d]_\alpha,\\
    &y_{t,d}^\epsilon(\xi) \coloneqq \frac{4}{(|\hat{\sigma}_t(\xi)|^2 /d + \epsilon)^2},\ z_{t,d}^\epsilon(\xi) \coloneqq \frac{8(|\hat{\sigma}_t(\xi)|^2 /d)^2}{(|\hat{\sigma}_t(\xi)|^2 /d + \epsilon)^4}.
    \end{split}
\]
Then,\ the following holds.
\begin{cor}
\label{EEGRTOAINoinoidsnSOAI}
    Under~\hypref{EIrugnoIASNOsirgmoiNAOISmfgoiN},\ \hypref{ERogienthoimOAINEOITRgdnofgnoINWSR} and \hypref{EORigndgfhoimOAINWOirngdofgimOASINSOfiogm},\ for all $\epsilon >0$,\ $\hat{B}_{n,d}^\epsilon(\hat{\theta}_{n,d}^\epsilon)$ and $\hat{D}_{n,d}^\epsilon(\hat{\theta}_{n,d}^\epsilon)$ converge in probability to $B^\epsilon$ and $D^\epsilon$,\ respectively, as $n,d \rightarrow \infty$.
\end{cor}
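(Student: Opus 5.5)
The argument combines three ingredients: the weak consistency of Corollary~\ref{EORginseoihmOIANOisrgmoiNAOiedrg}, a uniform-in-$\xi$ convergence of the random matrix fields $\hat{B}_{n,d}^\epsilon(\cdot)$ and $\hat{D}_{n,d}^\epsilon(\cdot)$ to $B^\epsilon(\cdot)$ and $D^\epsilon(\cdot)$ on $\Theta$, and continuity of the limits in $\xi$. Once these are in hand, we write
\[
|\hat{B}_{n,d}^\epsilon(\hat{\theta}_{n,d}^\epsilon)-B^\epsilon(\theta)| \le \sup_{\xi\in\Theta}|\hat{B}_{n,d}^\epsilon(\xi)-B^\epsilon(\xi)| + |B^\epsilon(\hat{\theta}_{n,d}^\epsilon)-B^\epsilon(\theta)|.
\]
The first term tends to zero in probability by uniform convergence. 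The second tends to zero by consistency together with the a.s. continuity of $\xi\mapsto B^\epsilon(\xi)$ on the compact set $\Theta$. The same decomposition handles $\hat D$.

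\textbf{Step 1: continuity of the limits.} Since $\xi\mapsto\sigma_t^x(\xi)$ and $\xi\mapsto\partial_\alpha\sigma_t^x(\xi)$ are continuous, uniformly in $(t,x)$, by \hypref{EIrugnoIASNOsirgmoiNAOISmfgoiN}, and $\sup_{t,u}V_t^u<\infty$ a.s. by the continuity of the exponential representation \eqref{IWurgnsitugbiUQNIUBAiunsifgubW}, the integrands $y_t^\epsilon(\xi)E_t(\xi)$ and $z_t^\epsilon(\xi)E_t(\xi)$ are continuous in $\xi$ and bounded. Here $\epsilon>0$ keeps the denominators away from zero. Dominated convergence then gives continuity of $B^\epsilon$ and $D^\epsilon$.

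\textbf{Step 2: uniform convergence.} I would reuse the machinery behind Theorem~\ref{EORigeothimoISNAOFISNrogsdifghaosfigneso}. First, I would show
\[
\sup_{t\in[0,1],\,\xi\in\Theta}\Bigl|\,|\hat{\sigma}_t(\xi)|^2/d-\int_0^1\sigma_t^x(\xi)^2\mu(dx)\Bigr|\to0
\]
in probability, and similarly
\[
\sup_{t,\xi}\Bigl|\partial_\alpha\hat{\sigma}_t(\xi)^\mathsf{T}\hat{\sigma}_t(\xi)/d-\int_0^1\partial_\alpha\sigma_t^x(\xi)\sigma_t^x(\xi)\mu(dx)\Bigr|\to0.
\]
The first ingredient is that the Riemann sum $\hat{\sigma}_t^j(\xi)$ approximates $\sigma_t^{T_j}(\xi)$ uniformly in $(t,j,\xi)$. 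This follows from $k,\partial_\alpha k\in C^{0,1}$ and the continuity of $u\mapsto V_t^u$, with error controlled by the modulus of continuity of $k$ times $\sup V$. This requires $\max_k|T_k-T_{k-1}|\to0$, which follows from \hypref{EORigndgfhoimOAINWOirngdofgimOASINSOfiogm} because $\mu$ has a positive density. The second ingredient is that $\frac1d\sum_j f(T_j)\to\int f\,d\mu$ for the continuous functions $f=\sigma_t^\cdot(\xi)^2$ and $f=\partial_\alpha\sigma_t^\cdot(\xi)\sigma_t^\cdot(\xi)$, extended by $0$ past $t$. The only discontinuity is at $x=t$, where $\sigma_t^x\to0$, so the functions are in fact continuous. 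Uniformity over $(t,\xi)$ comes from the equicontinuity of this family, since weak convergence is uniform over equicontinuous, uniformly bounded classes.

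\textbf{Step 3: Riemann sums in time.} The maps $g\mapsto 4/(g+\epsilon)^2$ and $g\mapsto 8g^2/(g+\epsilon)^4$ are Lipschitz on $[0,\infty)$, and products of bounded convergent quantities converge, so the summands converge uniformly in $(t,\xi)$. The remaining step is to show that $\frac1n\sum_i G(t_{i-1},\xi)\to\int_0^1G(t,\xi)\,dt$ uniformly in $\xi$, where $G$ is the limiting integrand. This holds because $G$ is a.s. continuous in $t$ (inherited from the continuity of $t\mapsto V_t^u$) and continuous in $\xi$ on compact $\Theta$, hence uniformly continuous.

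\textbf{Main obstacle.} I expect the hardest part to be the uniform (in $t$, $\xi$, and near the diagonal $x\approx t$) control of $\hat{\sigma}_t$ against $\sigma_t^{T_j}$, in particular bounding $\sup_{t,u}V_t^u$ and the modulus of continuity of $u\mapsto V_t^u$ jointly in $t$. I would handle this through a continuous modification of the two-parameter Gaussian field $\int_0^t k(\theta,u-s)\,dW_s$, obtained by Kolmogorov's criterion using the Lipschitz property of $k$. If the consistency proof already contains these estimates, I would cite them directly.
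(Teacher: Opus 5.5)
Your proof is correct. Its outer structure is the same as the paper's: uniform-in-$\xi$ convergence on $\Theta$, continuity of the limit at $\theta$, and the consistency of Corollary~\ref{EORginseoihmOIANOisrgmoiNAOiedrg}. The uniform convergence, however, is established by a genuinely different route.

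\textbf{The paper's route.} It inserts the intermediate matrix $B_d^\epsilon(\xi)=\int_0^1\hat{y}_{t,d}^\epsilon(\xi)\hat{e}_{t,d}(\xi)\hat{e}_{t,d}(\xi)^\mathsf{T}dt$, which uses discrete maturities but continuous time.
\begin{enumerate}
\item It treats the time discretization first. The moment bound $E|\hat{\sigma}_t(\xi)-\hat{\sigma}_s(\xi)|^2\le Cd|t-s|$ of Lemma~\ref{EOEITnhgoedtihmoIAOIDGHOIDFNhoiAMOIWS} gives $E|\hat{B}_{n,d}^\epsilon(\xi)-B_d^\epsilon(\xi)|\le C\Delta^{1/2}$.
\item Only then does it let $d\to\infty$ inside the time integral, by dominated convergence, for each fixed $\xi$.
\item In both steps, pointwise convergence is upgraded to uniform convergence over $\Theta$ through random bounds on $\nabla_\xi$ of all the ingredients. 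This is a stochastic-equicontinuity argument.
\end{enumerate}

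\textbf{Your route.} You reverse the order and send $d\to\infty$ first, pathwise and uniformly in $(t,\xi)$. You use two facts:
\begin{enumerate}
\item The maturity Riemann-sum error is bounded by the modulus of continuity of $k$ (and of $\partial_\alpha k$) at $\max_k|T_k-T_{k-1}|$, times $\sup_{0\le t\le u\le 1}V_t^u$.
\item Weak convergence $\mu_d\to\mu$ is automatically uniform over the uniformly bounded, uniformly Lipschitz class $\{x\mapsto\sigma_t^x(\xi)^2\}$ and $\{x\mapsto\partial_\alpha\sigma_t^x(\xi)\sigma_t^x(\xi)\}$.
\end{enumerate}
After that, only time Riemann sums of a fixed continuous limit integrand remain.

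\textbf{Comparison.} Your argument needs nothing beyond boundedness and continuity of the field $V$ on $\{0\le t\le u\le1\}$: no BDG or moment estimates. It actually yields almost sure convergence of $\sup_\xi|\hat{B}_{n,d}^\epsilon(\xi)-B^\epsilon(\xi)|$. It also uses less smoothness in $\xi$: continuity of $k$ and $\partial_\alpha k$ suffices, whereas the paper's gradient bounds on $\hat{e}_{t,d}$ invoke second $\xi$-derivatives of $k$. The paper's route instead gives an explicit $L^1$ rate $\Delta^{1/2}$ for the time-discretization error, and it reuses moment lemmas already needed for the asymptotic mixed normality.

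\textbf{One small point in Step 3.} Continuity of $G$ in $t$ and, separately, in $\xi$ does not by itself give uniform continuity on $[0,1]\times\Theta$; you need joint continuity. This does hold: combine your Step 1 observation that $\xi\mapsto\sigma_t^x(\xi)$ and $\xi\mapsto\partial_\alpha\sigma_t^x(\xi)$ are continuous uniformly in $(t,x)$ with continuity in $t$ for fixed $\xi$. You should state it that way.
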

Here, suppose that \hypref{EIrugnoIASNOsirgmoiNAOISmfgoiN}--\hypref{OEIrgnrsotihymoIANOisrfg}.\ For $\epsilon >0$, let 
\begin{equation}
\label{OEirgndsoihoiNOISNdofgisadnfgoi}
     \hat{\Gamma}_{n,d}^\epsilon\coloneqq \hat{B}_{n,d}^\epsilon(\hat{\theta}_{n,d}^\epsilon)^{-1}\hat{D}_{n,d}^\epsilon(\hat{\theta}_{n,d}^\epsilon)\hat{B}_{n,d}^\epsilon(\hat{\theta}_{n,d}^\epsilon)^{-1}.
\end{equation}
Since $B^\epsilon$ and $D^\epsilon$ are nonsingular,\ $\hat{\Gamma}_{n,d}^\epsilon$ is well defined and invertible for sufficiently large $n$ and $d$. Then, by the asymptotic mixed normality,\ the studentized statistic converges in law to the standard normal; That is,
\begin{equation}
\label{EWOrigndoghimOAISNFOSIFg}
    \sqrt{n}(\hat{\Gamma}_{n,d}^\epsilon)^{-1/2}(\hat{\theta}_{n,d}^\epsilon - \theta) \xrightarrow[n,d \rightarrow \infty]{\mathcal{L}} \mathcal{N}(0,I_q).
\end{equation}
\section{Some Estimates}
\label{OetidftghpmOAINsdofig}
From now on, without loss of generality,\ we extend the kernel by setting $k(\xi,t) =0$ for $t<0$,\ for any $\xi$ and define $V_t^u =0,\ t>u$.
In this section,\ we assume \hypref{EIrugnoIASNOsirgmoiNAOISmfgoiN} unless otherwise specified.
\begin{lem}
\label{OWImeorignOIAMOIMSdfkLQ}
    There exists a $p$-integrable random variable $M$ for any $p \geq 1$,\ such that the following holds.
    \begin{enumerate}
    \item For all $t,T,T_1,T_2$,\ $|I_t^T| \leq M|T-t|$ and $|I_t^{T_2}-I_t^{T_1}| \leq M|T_2-T_1|$.
    \item For any $\xi,t,d$ and $\alpha,\beta=1,\ldots,q$, $|\hat{\sigma}_t(\xi)| \leq M\sqrt{d}$,\ $|\partial_\alpha \hat{\sigma}_t(\xi)| \leq M\sqrt{d}$ and $|\partial_\alpha \partial_\beta \hat{\sigma}_t(\xi)| \leq M\sqrt{d}$.
    \item For any $\xi,t,x$ and $\alpha,\beta=1,\ldots,q$,\\ $|\sigma_t^x(\xi)| \leq M|x-t|$,\ $|\partial_\alpha \sigma_t^x(\xi)| \leq M|x-t|$ and $|\partial_\alpha \partial_\beta \sigma_t^x(\xi)| \leq M|x-t|$.
    \end{enumerate}
\end{lem}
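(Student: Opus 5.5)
The plan is to take $M$ to be a constant multiple of $\sup_{0\le t\le u\le 1} V_t^u$, and to prove that this supremum has moments of every order. Once that is done, all three items reduce to elementary bounds on the kernel.

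Set $K \coloneqq \sup_{\xi,t}\bigl(|k(\xi,t)| + \max_\alpha|\partial_\alpha k(\xi,t)| + \max_{\alpha,\beta}|\partial_\alpha\partial_\beta k(\xi,t)|\bigr)$. By \hypref{EIrugnoIASNOsirgmoiNAOISmfgoiN} and compactness of $\Theta\times[0,1]$, $K<\infty$. Let $V^* \coloneqq \sup_{0\le t\le u\le1}V_t^u$. We will take $M \coloneqq C(1+K)V^*$ for a suitable constant $C$.

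\textbf{Item (i).} Since $V_t^u\ge 0$,
\[
|I_t^T|=\int_t^T V_t^u\,du\le V^*|T-t|
\qquad\text{and}\qquad
|I_t^{T_2}-I_t^{T_1}|=\Bigl|\int_{T_1}^{T_2}V_t^u\,du\Bigr|\le V^*|T_2-T_1|.
\]
The conventions $I_t^T=0$ for $t\ge T$ and $V_t^u=0$ for $t>u$ keep these inequalities valid in every case.

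\textbf{Item (iii).} Differentiating under the integral in \eqref{OERignrotihmoIAMOWINrogidmfgoiNABOIB} gives $\partial_\alpha\sigma_t^x(\xi)=\int_t^x\partial_\alpha k(\xi,u-t)V_t^u\,du$, and similarly for the second derivatives. Each of these is therefore bounded by $KV^*|x-t|$.

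\textbf{Item (ii).} Use item (i) on each summand of \eqref{EOritgnsrotiyhmoiNSOISngodsifhg}. This gives
\[
|\hat\sigma_t^j(\xi)|\le K\sum_{l\le j}V^*(T_l-T_{l-1})\le KV^*T_j\le KV^*.
\]
Hence $|\hat\sigma_t(\xi)|\le \sqrt d\,KV^*$ in Euclidean norm. The same bound holds for $\partial_\alpha\hat\sigma_t(\xi)$ and $\partial_\alpha\partial_\beta\hat\sigma_t(\xi)$, with $k$ replaced by its derivatives.

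\textbf{Main step: $V^*\in L^p$ for every $p\ge1$.} Since $V_0^\cdot$ is continuous and positive, $V_t^u\le \|V_0\|_\infty\exp\bigl(\sup_{t\le u}|X_t^u|\bigr)$, where $X_t^u\coloneqq\int_0^t k(\theta,u-s)\,dW_s$. The drift term in \eqref{IWurgnsitugbiUQNIUBAiunsifgubW} has a nonpositive sign and can simply be dropped.

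Integrate by parts in $s$, which is allowed because $k(\theta,\cdot)\in C^1$:
\[
X_t^u = k(\theta,u-t)W_t+\int_0^t \partial_2 k(\theta,u-s)\,W_s\,ds .
\]
This is a pathwise representation valid for all $(t,u)$ simultaneously, and it gives $\sup_{t\le u}|X_t^u|\le 2K\sup_{s\le1}|W_s|$. Here $\partial_2 k$ denotes the derivative in the time variable; it is bounded because $k\in C^{2,2}$, and we include it in $K$.

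Consequently $V^*\le \|V_0\|_\infty\exp\bigl(2K\sup_{s\le 1}|W_s|\bigr)$. The right-hand side has finite moments of all orders, since $\sup_{s\le 1}|W_s|$ has Gaussian tails by the reflection principle. This completes the moment bound.

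I expect this last step to be the only real obstacle. It requires a bound on $V^*$ that is uniform in $(t,u)$, which a pointwise martingale moment bound does not provide. The integration by parts above handles this cleanly and avoids a Kolmogorov/Garsia–Rodemich–Rumsey continuity argument.
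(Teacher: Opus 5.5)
Your proof is correct, and for items (i)--(iii) it matches the paper: everything is dominated by a constant multiple of $V^*=\sup_{0\le t\le u\le 1}V_t^u$. You make that constant explicit via $M=C(1+K)V^*$, which the paper leaves implicit. The real difference is in the main step. The paper treats $X(t,u)=\int_0^t k(\theta,u-s)\,dW_s$ as a Gaussian field on the triangle. It uses the Kolmogorov--Totoki theorem to get a continuous modification, hence $V^*<\infty$ a.s., and then the Borell--TIS inequality to get Gaussian tails for $\sup X$, which gives all moments of $V^*$. You instead integrate by parts in $s$. The identity $X_t^u=k(\theta,u-t)W_t+\int_0^t\partial_2k(\theta,u-s)W_s\,ds$ holds a.s.\ for each fixed $(t,u)$, and its right-hand side is jointly continuous. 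So it directly supplies the continuous modification (the version one should use to define $V_t^u$), together with the explicit bound $V^*\le\|V_0\|_\infty\exp(2K\sup_{s\le1}|W_s|)$, whose moments follow from the reflection principle alone. Your route is more elementary and self-contained, and it gives a concrete dominating variable. Its cost is that it relies on $\sup|\partial_t k(\theta,\cdot)|<\infty$. The $C^{2,2}$ hypothesis on $k$ provides this, but it fails for kernels singular at the origin, such as the rough Bergomi power law the paper names as a later target. The Gaussian-field argument needs only (H\"older-type) continuity and boundedness of the field, so it is the more robust template for such extensions.
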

\begin{proof}
    (i) Let  $\mathbb{D} \coloneqq \{(t,u) \mid 0 \leq t \leq u \leq 1\}$ and $X(t,u) \coloneqq \int_0^t k(\theta,u-s) dW_s, (t,u) \in \mathbb{D}$.\ Since the Kolmogorov-Totoki theorem~\cite{Kunita2019}, $X$ admits a continuous modification.\ In particular,\ $M \coloneqq \sup_{(t,u) \in \mathbb{D} } V_t^u <\infty$.\ Moreover, from Borell-TIS inequality~\cite{AdlerTaylor2007},\ $M$ is $p$-integrable for every $p \geq 1$.\ Then for all $t<T$,\ $|I_t^T|=\left|\int_t^T V_t^udu\right| \leq M|T-t|$. Similarly, for every $t <T_1,T_2$,\ $|I_t^{T_2}-I_t^{T_1}| =\left|\int_{T_1}^{T_2}V_t^u du\right| \leq M|T_2-T_1|$.\ On the other hand, in the case of $T_1 \leq t <T_2$, it follows from the previous result that $|I_t^{T_2}-I_t^{T_1}| =|I_t^{T_2}| \leq M|T_2-t| \leq M|T_2-T_1|$.

    (ii) For arbitrary $j$,\ $\xi$ and $t$,\  since \hypref{EIrugnoIASNOsirgmoiNAOISmfgoiN} and the result of (i) implies there exists another constant $C>0$ such that
    \[|\hat{\sigma}_t^j(\xi)| \leq C \sum_{l=1}^j (I_t^{T_l}-I_t^{T_{l-1}})=CI_t^{T_j} \leq CM|T_j-t| \leq CM,\]
the assertion has been established. The rest can be proved similarly.

(iii) Again,\ by \hypref{EIrugnoIASNOsirgmoiNAOISmfgoiN},\ there exists the same constant $C>0$ as before in (ii) such that for any $\xi$ and $t<x$,
\[|\sigma_t^x(\xi)| \leq C\int_t^xV_t^udu=CI_t^x \leq CM|x-t|,\]
and hence the first claim follows.\ The remaining part can be proved analogusly.
\end{proof}

\begin{proof}[Proof of Proposition~\textnormal{\ref{SOfdignrsothimoISNogisdnfgoimOSINFgoidsfg}}]
    -- By Lemma~\ref{OWImeorignOIAMOIMSdfkLQ},\ the stochastic Fubini's theorem~\cite{Veraar01082012} can be applied and from \eqref{IWurgnsitugbiUQNIUBAiunsifgubW},\ for every $t < T$,
        \[\begin{split}
        \int_0^t \int_s^T k(\theta,u-s) V_s^u du dW_s &=\int_0^T \int_0^{t \wedge u}k(\theta,u-s)V_s^u dW_s du\\
        &=\int_0^t (V_u^u-V_0^u) du +\int_t^T (V_t^u -V_0^u)du\\
        &=\int_0^t V_u du +I_t^T -I_0^T.
    \end{split}\]
\end{proof}

\begin{lem}
\label{EOEITnhgoedtihmoIAOIDGHOIDFNhoiAMOIWS}
    The following holds.
    \begin{enumerate}
        \item For all $p \geq 1$,\ there exists a constant $C_p$ such that for any $t,s,T$,\ \[E[|I_t^T -I_s^T|^p] \leq C_p |t-s|^{p/2}.\]
        \item There exists a constant $C$ such that for every $\xi$ and $t,s,d$,\ \[E[|\hat{\sigma}_t(\xi)-\hat{\sigma}_s(\xi)|^2] \leq Cd|t-s|.\]
        \item There exists a constant $C$ such that for any $\xi,t,s,d$ and $\alpha=1,\ldots,q$,\ \[E[|\partial_\alpha\hat{\sigma}_t(\xi)-\partial_\alpha \hat{\sigma}_s(\xi)|^2] \leq Cd|t-s|.\]
        \item There exists a constant $C$ such that for all $\xi,t,s,d$ and $\alpha,\beta =1,\ldots,q$,\ \[E[|\partial_\alpha \partial_\beta \hat{\sigma}_t(\xi)-\partial_\alpha \partial_\beta \hat{\sigma}_s(\xi)|^2] \leq Cd|t-s|.\]
    \end{enumerate}
\end{lem}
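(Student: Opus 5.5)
Part (i) comes first. Take $s<t$ without loss of generality and split into cases according to the position of $T$.

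If $t<T$, Proposition~\ref{SOfdignrsothimoISNogisdnfgoimOSINFgoidsfg} gives
\[
I_t^T-I_s^T=-\int_s^t V_r\,dr+\int_s^t \sigma_r^T(\theta)\,dW_r .
\]
We bound the two terms separately.
\begin{itemize}
\item For the drift term, $V_r\le M$ with $M$ as in Lemma~\ref{OWImeorignOIAMOIMSdfkLQ}. Hence $E[|\int_s^t V_r\,dr|^p]\le E[M^p]|t-s|^p\le C|t-s|^{p/2}$, using $|t-s|\le1$.
\item For the stochastic integral, the Burkholder--Davis--Gundy inequality together with $|\sigma_r^T(\theta)|\le M$ (Lemma~\ref{OWImeorignOIAMOIMSdfkLQ}(iii)) gives $E[|\int_s^t\sigma_r^T dW_r|^p]\le C_pE[(\int_s^t M^2dr)^{p/2}]\le C_p E[M^p]|t-s|^{p/2}$.
\end{itemize}

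If $s<T\le t$, then $I_t^T=0$. Lemma~\ref{OWImeorignOIAMOIMSdfkLQ}(i) gives $|I_s^T|\le M|T-s|\le M|t-s|$, which is again bounded as required. If $T\le s$, both terms vanish. The edge case $t=T$ is covered by continuity, or directly by the bound $|I_s^T|\le M(T-s)$.

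For (ii)--(iv), write, for each $j$,
\[
\hat\sigma_t^j(\xi)-\hat\sigma_s^j(\xi)=\sum_{l\le j}\bigl[k(\xi,T_l-t)-k(\xi,T_l-s)\bigr]\Delta_lI_t+\sum_{l\le j}k(\xi,T_l-s)\bigl(\Delta_lI_t-\Delta_lI_s\bigr),
\]
where $\Delta_lI_t\coloneqq I_t^{T_l}-I_t^{T_{l-1}}$. Since $|\hat\sigma_t(\xi)|^2$ is a sum of $d$ squared components, it suffices to show $E[|\hat\sigma_t^j-\hat\sigma_s^j|^2]\le C|t-s|$ uniformly in $j$.

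The delicate point is that $k$, extended by zero on negative arguments, is discontinuous at $0$. So the factor $k(\xi,T_l-t)-k(\xi,T_l-s)$ is not Lipschitz when $T_l\in(s,t]$.

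The cleanest route avoids differencing the kernel term by term. Set $f_r(\xi,u)\coloneqq k(\xi,u-r)$ on $u\ge r$. Then
\[
\hat\sigma_r^j(\xi)=\sum_{l\le j}\int_{T_{l-1}}^{T_l}k(\xi,T_l-r)V_r^u\,du
\]
for $r<T_{l-1}$, with a modified first term when $r$ falls inside a cell. We treat the three groups of indices as follows.
\begin{itemize}
\item Indices $l$ with $T_l\le s$ vanish for both $s$ and $t$.
\item Indices $l$ with $T_{l-1}<t$ and $T_l>s$ form the cells meeting $(s,t]$ plus one extra cell. For these we bound each term by $C\,|\Delta_lI_\cdot|$ and use $\sum|\Delta_l I_t|\le \int_{s\vee T_{l-1}}^{\dots}V\le M(|t-s|+\text{one cell})$. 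The one-cell contributions must be handled by including their continuity in $r$: on a cell containing $r$, the term is $k(\xi,T_l-r)I_r^{T_l}$, which is continuous in $r$.
\item The remaining indices with $T_{l-1}\ge t$ involve a Lipschitz kernel difference, bounded by $C|t-s|\,I_t^{T_j}\le CM|t-s|$. Their increments telescope to
\[
\sum_l k(\xi,T_l-s)\bigl(\Delta_lI_t-\Delta_lI_s\bigr).
\]
Writing $\Delta_lI_t-\Delta_lI_s=-0+\int_s^t(\sigma_r^{T_l}-\sigma_r^{T_{l-1}})\,dW_r$ (the drift cancels since $T_{l-1}\ge t$) and summing, we get a single stochastic integral $\int_s^t\sum_l k(\xi,T_l-s)\int_{T_{l-1}}^{T_l}k(\theta,u-r)V_r^u\,du\,dW_r$. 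Its integrand is bounded by $CM$, so Itô's isometry gives the bound $C|t-s|$.
\end{itemize}

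For the straddling cell we use part (i)-type estimates. The term is $k(\xi,T_l-t)I_t^{T_l}-k(\xi,T_l-s)I_s^{T_l}$. Here $k$ is Lipschitz on its argument when both arguments are positive, and $E|I_t^{T_l}-I_s^{T_l}|^2\le C|t-s|$ by (i). If the cell is entirely passed, the term is bounded by $CM(T_l-s)\le CM|t-s|$.

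I expect the main obstacle to be this bookkeeping of the cells that $s,t$ straddle. The claim is that only $O(1+d|t-s|)$ such cells contribute, and each contributes $O(|t-s|)$ in $L^2$ when $|t-s|$ is compared to the cell size. Summing the squares over $j\le d$ then yields $Cd|t-s|$.

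Parts (iii) and (iv) are identical, replacing $k$ by $\partial_\alpha k$ and $\partial_\alpha\partial_\beta k$. These are bounded and Lipschitz by Assumption~\hypref{EIrugnoIASNOsirgmoiNAOISmfgoiN}, and the constants are uniform in $\xi$ by compactness of $\Theta$.
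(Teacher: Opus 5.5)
Your part (i) is the same as the paper's. For (ii)--(iv) your argument also works, but it takes a longer route because of an obstacle that never arises. With $s<t$, the discontinuity of the zero-extended kernel is always multiplied by a vanishing increment.

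In $\sum_{l\le j}[k(\xi,T_l-t)-k(\xi,T_l-s)]\Delta_lI_t$, every index with $T_l\le t$ has $\Delta_lI_t=0$. Only $T_l>t$ survives, so the paper gets $|A_1^j|\le C|t-s|I_t^{T_j}\le M|t-s|$ at once. For the other sum the paper uses summation by parts, $\sum_{l<j}[k(\xi,T_l-s)-k(\xi,T_{l+1}-s)](I_t^{T_l}-I_s^{T_l})+k(\xi,T_j-s)(I_t^{T_j}-I_s^{T_j})$. Here again only $T_l>s$ contributes, so the kernel differences are $O(T_{l+1}-T_l)$. Cauchy--Schwarz with these weights, together with part (i) (uniform in $T$, including the regime $s<T\le t$), gives $C|t-s|$. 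No cell ever has to be located relative to $s$ and $t$.

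You instead split the cells into three groups:
\begin{enumerate}
\item For $T_{l-1}\ge t$ you use the exact cancellation of the drift to write the increment sum as one stochastic integral with integrand bounded by $CM$, then apply It\^o's isometry. This is a transparent use of the SDE and replaces the summation by parts.
\item The cells inside $(s,t]$ are bounded pathwise.
\item The single cell with $T_{l-1}<t<T_l$ is handled by (i).
\end{enumerate}
This is valid, but it buys nothing in hypotheses. It also costs a case analysis that the paper's two-term decomposition absorbs automatically.

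Be careful with your closing heuristic ($O(1+d|t-s|)$ cells, each $O(|t-s|)$ in $L^2$). It would not give a bound uniform in $d$, and it presupposes a uniform grid, which the lemma does not assume. What actually controls the middle group is total length, as in your earlier bullet: $\sum_{s<T_l\le t}k(\xi,T_l-s)\Delta_lI_s\le C\int_s^tV_s^u\,du\le CM(t-s)$. Also, for the straddling cell with $T_{l-1}>s$, the $s$-term is $\Delta_lI_s=I_s^{T_l}-I_s^{T_{l-1}}$ rather than $I_s^{T_l}$. The extra piece is at most $M(t-s)$, so nothing breaks.
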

\begin{proof}
Hereafter, $C$ denotes a generic positive constant and $M$ a generic random variable with finite moments of all orders; both may vary from line to line.

    (i) From Proposition~\ref{SOfdignrsothimoISNogisdnfgoimOSINFgoidsfg} and the Burkh\"{o}lder-Davis-Gundy inequality,\ for all $p \geq 1$,\ there exists a constant $K_p$ such that for any $s,t<T$,
    \[
        E[|I_t^T-I_s^T|^p] 
        \leq 2^{p-1} \left(E\left[\left|\int_s^t V_udu\right|^p\right]+K_pE\left[\left|\int_s^t \sigma_u^T(\theta)^2du\right|^{p/2}\right]\right).
 \]
    Therefore,\ by Lemma~\ref{OWImeorignOIAMOIMSdfkLQ} we obtain the desired inequality.\ If $s<T \leq t$, then by Lemma~\ref{OWImeorignOIAMOIMSdfkLQ} again, $|I_t^T-I_s^T|^p =|I_s^T|^p \leq M|T-s|^p \leq M|t-s|^p$,\ and thus the claim follows.

    (ii) For all $\xi$ and $t,s,j=1,\ldots,d$, set $\hat{\sigma}_t^j(\xi) -\hat{\sigma}_s^j(\xi) = A_1^j +A_2^j$, where
    \[\begin{split}A_1^j &\coloneqq \sum_{l=1}^j (k(\xi,T_l-t)-k(\xi,T_l-s))(I_t^{T_l}-I_t^{T_{l-1}}),\\ 
    A_2^j &\coloneqq \sum_{l=1}^j k(\xi,T_l-s)[(I_t^{T_l}-I_s^{T_l})-(I_t^{T_{l-1}}-I_s^{T_{l-1}})].\end{split}
\]
By the $\xi$-uniform Lipschitz continuity of $k$, the monotonicity of $I_t^T$ in $T$, and Lemma~\ref{OWImeorignOIAMOIMSdfkLQ}, we have $|A_1^j| \leq C|t-s|I_t^{T_j} \leq M|t-s|$. Hence, $E|A_1^j|^2 \leq C|t-s|$. Next, by summation by parts,
\[A_2^j = \sum_{l=1}^{j-1} (k(\xi,T_l-s)-k(\xi,T_{l+1}-s))(I_t^{T_l}-I_s^{T_l})+k(\xi,T_j-s)(I_t^{T_j}-I_s^{T_j}).\]
Using the $\xi$-uniform Lipschitz continuity and boundedness of $k$,
\[|A_2^j| \leq C\sum_{l=1}^{j-1} (T_{l+1}-T_l) |I_t^{T_l}-I_s^{T_l}| + C|I_t^{T_j} - I_s^{T_j}|.\]
Therefore, by the Cauchy-Schwarz inequality and $\sum_{l=1}^{j-1} (T_{l+1}-T_l) \leq 1$,
\[E|A_2^j|^2 \leq  C \sum_{l=1}^{j-1} (T_{l+1}-T_l)E|I_t^{T_l}-I_s^{T_l}|^2 + CE|I_t^{T_j}-I_s^{T_j}|^2 \leq C|t-s|.\]
Consequently, $E|\hat{\sigma}_t^j (\xi)- \hat{\sigma}_s^j(\xi)|^2 \leq C|t-s|$ and we obtain the desired estimate.
(iii) and (iv) are proved in the same way as (ii).
\end{proof}

\begin{lem}
\label{EOrignrotihomoISADOISnrgoin}
For any $t<x$,
   \[
   \begin{split}
       &\sigma_t^x =\sigma_0^x +\int_0^t b_s^xds +\int_0^t \phi_s^x dW_s,\\
       &b_s^x \coloneqq -k(\theta,0)V_s -\int_s^x \partial_tk(\theta,u-s)V_s^udu,\ \phi_s^x \coloneqq \int_s^x k(\theta,u-s)^2 V_s^udu.
   \end{split}\]
   Moreover,\ for all $p \geq 1$,\ there exists a constant $C_p$ such that for every $s,t,x$,\ $E[|\sigma_t^x-\sigma_s^x|^p] \leq C_p|t-s|^{p/2}$ and for any $s,t<x$,\ $E[|\phi_t^x-\phi_s^x|^p] \leq C_p|t-s|^{p/2}$.
\end{lem}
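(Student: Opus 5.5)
Here $\sigma_t^x=\sigma_t^x(\theta)=\int_t^x k(\theta,u-t)V_t^u\,du$. The plan is to redo the argument of Proposition~\ref{SOfdignrsothimoISNogisdnfgoimOSINFgoidsfg}, now for the weighted integral. There are two sources of time dependence: the forward variances $V_t^u$ and the kernel weight $k(\theta,u-t)$.

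First, by \eqref{IWurgnsitugbiUQNIUBAiunsifgubW} and It\^o's formula, for each $u$ and $s\le u$,
\[
V_s^u=V_0^u+\int_0^s k(\theta,u-r)V_r^u\,dW_r .
\]
Second, $k(\theta,u-s)$ is deterministic and $C^1$ in $s$ by \hypref{EIrugnoIASNOsirgmoiNAOISmfgoiN}. For fixed $u>t$ and $s\le t$ the product rule then gives
\[
d\bigl(k(\theta,u-s)V_s^u\bigr)=-\partial_tk(\theta,u-s)V_s^u\,ds+k(\theta,u-s)^2V_s^u\,dW_s ,
\]
where $\partial_t$ denotes differentiation in the second argument. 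Integrating over $s\in[0,t]$ yields
\[
k(\theta,u-t)V_t^u=k(\theta,u)V_0^u-\int_0^t\partial_tk(\theta,u-s)V_s^u\,ds+\int_0^t k(\theta,u-s)^2V_s^u\,dW_s .
\]

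Next I integrate this identity in $u$ over $(t,x)$. Recall that $\sigma_0^x=\int_0^x k(\theta,u)V_0^u\,du$, and the left side integrates to $\sigma_t^x$. I would swap the $du$ and $ds$/$dW_s$ integrals, exactly as in the proof of Proposition~\ref{SOfdignrsothimoISNogisdnfgoimOSINFgoidsfg}. This uses the ordinary and stochastic Fubini theorems, justified because $|k|,|\partial_tk|\le C$ and $\sup V\le M\in L^p$ by Lemma~\ref{OWImeorignOIAMOIMSdfkLQ}. The swap requires care with the $u$-range: for $u\in(s,t]$ the integrand must be taken up to time $u$ only. So I write $\int_0^x\!\int_0^{t\wedge u}$ and split $u\le t$ from $u>t$.

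The part $u\le t$ gives
\[
\int_0^t\bigl[k(\theta,0)V_u^u-k(\theta,u)V_0^u\bigr]du .
\]
This uses the same integral identity evaluated at $s=u$ and $k(\theta,0)V_u^u=k(\theta,0)V_u$. Collecting terms, the $-\int_0^t k(\theta,u)V_0^u\,du$ pieces cancel. The remaining terms are
\[
-\int_0^t k(\theta,0)V_s\,ds-\int_0^t\!\int_s^x\partial_tk(\theta,u-s)V_s^u\,du\,ds+\int_0^t\!\int_s^x k(\theta,u-s)^2V_s^u\,du\,dW_s,
\]
which is exactly $\int_0^t b_s^x\,ds+\int_0^t\phi_s^x\,dW_s$. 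Getting these boundary terms right is the main bookkeeping obstacle.

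For the moment bounds, take $s<t$, first with $t<x$. Then $|b_r^x|\le CM$ and $|\phi_r^x|\le CM$, so Burkholder--Davis--Gundy gives $E|\sigma_t^x-\sigma_s^x|^p\le C_p(|t-s|^p+|t-s|^{p/2})\le C_p|t-s|^{p/2}$. If instead $s<x\le t$, then $\sigma_t^x=0$ and $|\sigma_s^x|\le M|x-s|\le M|t-s|$ by Lemma~\ref{OWImeorignOIAMOIMSdfkLQ}, which suffices.

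For $\phi$, write $\phi_t^x-\phi_s^x$ in the same way as $A_1+A_2$ in Lemma~\ref{EOEITnhgoedtihmoIAOIDGHOIDFNhoiAMOIWS}, with $s<t<x$. The piece $\int_s^t k^2V_s^u\,du$ and the kernel difference $k(\theta,u-t)^2-k(\theta,u-s)^2$ are both $O(M|t-s|)$. What remains is $\int_t^x k(\theta,u-t)^2(V_t^u-V_s^u)\,du$. By Minkowski's integral inequality and BDG on $V_t^u-V_s^u=\int_s^t k(\theta,u-r)V_r^u\,dW_r$, this piece has $L^p$ norm at most $C|t-s|^{1/2}$.
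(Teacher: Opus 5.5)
Your proof is correct. The representation of $\sigma_t^x$ and the increment bound for $\sigma^x$ are obtained exactly as in the paper:
\begin{itemize}
\item the product rule for $k(\theta,u-s)V_s^u$;
\item stochastic Fubini;
\item the same identity evaluated at time $u$ to handle the range $u\le t$ (the paper computes $\int_t^x k(\theta,u)V_0^u\,du$ in a separate display, which is the same bookkeeping as your split of $\int_0^x\int_0^{t\wedge u}$);
\item BDG, together with Lemma~\ref{OWImeorignOIAMOIMSdfkLQ} for the case $s<x\le t$.
\end{itemize}

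The genuine difference is the bound for $\phi^x$. The paper repeats the product-rule/Fubini computation with $k^2$ in place of $k$. This gives $\phi_t^x=\phi_0^x+\int_0^t c_s^x\,ds+\int_0^t\psi_s^x\,dW_s$, with $c_s^x=-k(\theta,0)^2V_s-2\int_s^x k(\theta,u-s)\partial_tk(\theta,u-s)V_s^u\,du$ and $\psi_s^x=\int_s^x k(\theta,u-s)^3V_s^u\,du$. It then applies BDG exactly as for $\sigma^x$.

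You instead decompose $\phi_t^x-\phi_s^x$ directly into three pieces. Two of them are $O(M|t-s|)$, because $k^2$ is Lipschitz on $[0,1]$. The third is $\int_t^x k(\theta,u-t)^2(V_t^u-V_s^u)\,du$, which you control by Minkowski's integral inequality and BDG applied to $V_t^u-V_s^u=\int_s^t k(\theta,u-r)V_r^u\,dW_r$.

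The paper's route is uniform with the first half of the lemma. It also yields the full It\^o decomposition of $\phi^x$, which is more than the increment bound requires. Your route skips a second stochastic-Fubini computation, which the paper states only as ``similarly''. It also works verbatim for $\int_t^x g(u-t)V_t^u\,du$ with any Lipschitz weight $g$, so it would even give the $\sigma^x$ increment bound without the SDE.
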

\begin{proof}
    In the following,\ $k(\theta,\cdot)$ will simply be denoted by $k(\cdot)$.\ First,\ for any $u \in (0,1]$ and each $t \leq u$,\ from \eqref{IWurgnsitugbiUQNIUBAiunsifgubW},\ integration by parts yields
   \begin{equation}
   \label{EORignethyoimOAIEnOSIRgnods}
       k(u-t)V_t^u=k(u)V_0^u -\int_0^tk'(u-s)V_s^uds +\int_0^t k(u-s)^2 V_s^udW_s.
   \end{equation}
   Therefore,\ by the stochastic Fubini's theorem (justified by Lemma~\ref{OWImeorignOIAMOIMSdfkLQ}),\ we obtain
   \begin{equation}\begin{split}
   \label{EOrtigeothimoIASNOSIFngoinertg}
              \sigma_t^x &=\int_t^x k(u-t)V_t^udu\\
       &=\int_t^xk(u)V_0^udu -\int_0^t\int_t^x k'(u-s)V_s^ududs+\int_0^t \int_t^x k(u-s)^2V_s^ududW_s
       \end{split}
   \end{equation}
   for all $t<x$.\ Here,\ in \eqref{EORignethyoimOAIEnOSIRgnods},\ if we set $t=u$,\ then by the stochastic Fubini's theorem again,\ we obtain
   \begin{equation}
   \label{ERogienthoimOASINSOfigndofgih}\begin{split}
\int_t^xk(u)V_0^udu &=\sigma_0^x -\int_0^t k(0)V_udu-\int_0^t \int_s^t k'(u-s)V_s^ududs\\
&\ \ \ \ +\int_0^t \int_s^tk(u-s)^2 V_s^u dudW_s.
\end{split}
   \end{equation}
   Hence,\ combining \eqref{EOrtigeothimoIASNOSIFngoinertg} and \eqref{ERogienthoimOASINSOfigndofgih},\ we arrive at
\[
   \begin{split}
       &\sigma_t^x =\sigma_0^x +\int_0^t b_s^xds +\int_0^t \phi_s^x dW_s,\\
       &b_s^x \coloneqq -k(0)V_s -\int_s^x k'(u-s)V_s^udu,\ \phi_s^x \coloneqq \int_s^x k(u-s)^2 V_s^udu.
   \end{split}\]
   Similarly,\ it follows that
   \[\begin{split}
       &\phi_t^x =\phi_0^x+\int_0^t c_s^x ds + \int_0^t \psi_s^x dW_s,\\
       &c_s^x \coloneqq -k(0)^2 V_s-2 \int_s^x k(u-s)k'(u-s) V_s^u du,\ \psi_s^x\coloneqq \int_s^x k(u-s)^3V_s^udu.
   \end{split}\]
   Therefore,\ as in the proof of Lemma~\ref{OWImeorignOIAMOIMSdfkLQ},\ it follows from the BDG inequality that there exists a constant $C_p$ such that for all $s,t<x$,\ $E[|\sigma_t^x-\sigma_s^x|^p] \leq C_p|t-s|^{p/2}$ and $E[|\phi_t^x-\phi_s^x|^p] \leq C_p|t-s|^{p/2}$.\ If $s<x \leq t$,\ by Lemma~\ref{OWImeorignOIAMOIMSdfkLQ},\ there exists a random variable $M$ such that $|\sigma_t^x-\sigma_s^x|^p=|\sigma_s^x|^p \leq M|x-s|^p \leq M|t-s|^p$ and thus the craim follows.
\end{proof}
\section{Convergence of Contrast Functions}
\label{02}
We shall prove the convergence of the estimating function~\eqref{WRgudgtnhnOAIdnfozdifgNSODIndfhg} in several steps. To this end,\ we set
\[
\chi_{i,d}^\epsilon (\xi) \coloneqq \frac{1}{n} [F_\epsilon(\xi,t_{i-1},\Delta I_i^d)-U_{t_{i-1},d}^\epsilon (\xi)]=\frac{1}{n} \cdot \frac{1}{|\hat{\sigma}_{t_{i-1}}(\xi)|^2 +\epsilon d}(|\Delta I_i^d |^2 -|\sigma_{t_{i-1}}^d(\theta)|^2),\\
\]
and put  $X_{n,d}^\epsilon (\xi) \coloneqq \sum_{i=1}^n \chi_{i,d}^\epsilon (\xi)$. 

Hereafter,\ we use the same symbol $C$ for all constants appearing in the estimates,\ and all random variables used in the estimates involving Lemma~\ref{OWImeorignOIAMOIMSdfkLQ} will be denoted by the same symbol $M$.

\begin{lem}
\label{EOirgrthoimOASINOSdigdghNOSINR}
    Under \hypref{EIrugnoIASNOsirgmoiNAOISmfgoiN},\ for each $\xi$ and $\epsilon$,
    \[\begin{split}
&\sum_{i=1}^n E[\chi_{i,d}^\epsilon(\xi) \mid \mathscr{F}_{t_{i-1}}] \underset{n,d\to\infty}{\longrightarrow} 0,\ \text{in prob}.\\
&\sum_{i=1}^n E[\chi_{i,d}^\epsilon(\xi)^2 \mid \mathscr{F}_{t_{i-1}}] \underset{n,d\to\infty}{\longrightarrow} 0,\ \text{in prob}.
\end{split}\]
\end{lem}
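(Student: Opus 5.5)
Since $\hat{\sigma}_{t_{i-1}}(\xi)$ is $\mathscr{F}_{t_{i-1}}$-measurable, the weight $w_{i-1}\coloneqq (|\hat{\sigma}_{t_{i-1}}(\xi)|^2+\epsilon d)^{-1}$ factors out of the conditional expectations. It is bounded by $1/(\epsilon d)$, so
\[
\Bigl|\sum_{i} E[\chi_{i,d}^\epsilon(\xi)\mid\mathscr{F}_{t_{i-1}}]\Bigr| \le \frac{1}{n\epsilon d}\sum_{i=1}^n \Bigl|E\bigl[|\Delta I_i^d|^2-|\sigma^d_{t_{i-1}}(\theta)|^2 \bigm| \mathscr{F}_{t_{i-1}}\bigr]\Bigr| .
\]
Both claims therefore reduce to coordinatewise bounds: it suffices to control, for each $j$, the quantities
\[
E\bigl[(I_{t_i}^{T_j}-I_{t_{i-1}}^{T_j})^2/\Delta - \sigma_{t_{i-1}}^{T_j}(\theta)^2 \bigm| \mathscr{F}_{t_{i-1}}\bigr]
\qquad\text{and}\qquad
E\bigl[(I_{t_i}^{T_j}-I_{t_{i-1}}^{T_j})^4/\Delta^2\bigm|\mathscr{F}_{t_{i-1}}\bigr],
\]
uniformly in $j$, and then average over $j$, which is where the factor $1/d$ is used.

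For the first quantity I use Proposition~\ref{SOfdignrsothimoISNogisdnfgoimOSINFgoidsfg}. Write $I_{t_i}^{T}-I_{t_{i-1}}^T = -\int_{t_{i-1}}^{t_i}V_s\,ds+\int_{t_{i-1}}^{t_i}\sigma_s^T(\theta)\,dW_s$. Squaring and applying It\^o's isometry gives
\[
E\bigl[(I_{t_i}^{T}-I_{t_{i-1}}^T)^2\mid\mathscr{F}_{t_{i-1}}\bigr]
= E\Bigl[\int_{t_{i-1}}^{t_i}\sigma_s^T(\theta)^2ds\Bigm|\mathscr{F}_{t_{i-1}}\Bigr] + R_i,
\]
where $R_i$ collects the drift-squared term and the cross term. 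The drift-squared term is $O(M\Delta^2)$ by Lemma~\ref{OWImeorignOIAMOIMSdfkLQ}. The cross term is bounded via Cauchy--Schwarz by $(\Delta^2 E[M^2|\mathscr{F}_{t_{i-1}}])^{1/2}(\Delta E[M^2|\mathscr{F}_{t_{i-1}}])^{1/2}$, which is $O(\Delta^{3/2})$ times a conditional moment of $M$. Dividing by $\Delta$ and replacing $\sigma_s^T$ by $\sigma_{t_{i-1}}^T$ leaves the error
\[
\frac{1}{\Delta}E\Bigl[\int_{t_{i-1}}^{t_i}\bigl|\sigma_s^{T}(\theta)^2-\sigma_{t_{i-1}}^{T}(\theta)^2\bigr|ds\Bigm|\mathscr{F}_{t_{i-1}}\Bigr].
\]
Its unconditional expectation is $O(\Delta^{1/2})$, because Lemma~\ref{EOrignrotihomoISADOISnrgoin} gives $E|\sigma_s^x-\sigma_t^x|^p\le C|s-t|^{p/2}$ and Lemma~\ref{OWImeorignOIAMOIMSdfkLQ} gives $|\sigma_s^x+\sigma_t^x|\le 2M$. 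I then bound the absolute value of the conditional sum by the sum of absolute values and take expectations. This gives
\[
E\Bigl|\sum_{i} E[\chi_{i,d}^\epsilon\mid\mathscr{F}_{t_{i-1}}]\Bigr| \le \frac{1}{n\epsilon d}\cdot n\cdot d\cdot C\Delta^{1/2}=O(n^{-1/2}),
\]
uniformly in $d$, so the sum tends to $0$ in $L^1$ and hence in probability. One case needs attention: maturities with $T_j\in(t_{i-1},t_i]$. There the proposition applies only up to $T_j$, and $\sigma^{T_j}$ is set to zero afterwards. I handle this directly with $|I_{t_{i-1}}^{T_j}|\le M\Delta$, which contributes $O(M\Delta)$ after division by $\Delta$, and with $\sigma^{T_j}_{t_{i-1}}\le M\Delta$. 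Both are negligible.

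For the second claim I use $w_{i-1}^2\le 1/(\epsilon d)^2$ and $\bigl(|\Delta I_i^d|^2-|\sigma^d_{t_{i-1}}|^2\bigr)^2\le 2|\Delta I_i^d|^4+2|\sigma^d_{t_{i-1}}|^4$. By Cauchy--Schwarz, $|\Delta I_i^d|^4\le d\sum_j (\Delta I_i^{d,j})^4$. Lemma~\ref{EOEITnhgoedtihmoIAOIDGHOIDFNhoiAMOIWS}(i) with $p=4$ gives $E(\Delta I^{d,j}_i)^4\le C_4$, and Lemma~\ref{OWImeorignOIAMOIMSdfkLQ} gives $|\sigma_{t_{i-1}}^d|^4\le M^4d^2$. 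Hence the expected sum of conditional second moments is at most
\[
\sum_i\frac{1}{n^2\epsilon^2d^2}\,C d^2 = O(1/n),
\]
and Markov's inequality finishes. The main obstacle, mild here, is the first-moment estimate: the remainder terms must be $o(1)$ per coordinate uniformly in $j$ and $i$, including the boundary maturities. Lemma~\ref{EOrignrotihomoISADOISnrgoin} supplies this with rate $\Delta^{1/2}$, and working with $L^1$ norms of the conditional expectations rather than pathwise bounds avoids any conditional-moment issues with $M$.
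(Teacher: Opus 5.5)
Your proposal is correct and takes the same route as the paper. You bound the $\mathscr{F}_{t_{i-1}}$-measurable weight by $1/(\epsilon d)$, then control $E\bigl|E[(I_{t_i}^{T_j}-I_{t_{i-1}}^{T_j})^2/\Delta-\sigma_{t_{i-1}}^{T_j}(\theta)^2\mid\mathscr{F}_{t_{i-1}}]\bigr|\le C\Delta^{1/2}$ coordinatewise via the SDE representation of $I_t^T$, It\^o's isometry and the moment bounds for $I$ and $\sigma$, and handle the second claim through fourth moments to get $O(1/n)$. Your $O(\Delta)$ bound for the maturity falling in $(t_{i-1},t_i]$ is slightly sharper than the paper's bound by $M$ with the at-most-one-term count, but the difference is cosmetic.
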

\begin{proof}
    We begin by proving the first claim.\ In the following,\ we abbreviate $E[\cdot \mid \mathscr{F}_{t_{i-1}}]$ by $E_i[\cdot]$.
    Then we have 
       \[
   \left|\sum_{i=1}^n E_i [\chi_{i,d}^\epsilon(\xi)] \right| \leq \frac{1}{\epsilon nd} \sum_{j=1}^d \sum_{i=1}^n |E_i [|I_{t_i}^{T_j}-I_{t_{i-1}}^{T_j}|^2/\Delta-\sigma_{t_{i-1}}^{T_j}(\theta)^2]|.
   \] 
   Let $j$ be fixed arbitrarily .\ If $T_j \leq t_{i-1}$,\ then by definition,\ the conditional expectation on the right-hand side is zero.\ Next,\ assume that $t_{i-1} < T_j \leq t_i$.\ In this case,\ Since $|T_j-t_{i-1}| \leq |t_i-t_{i-1}| =\Delta \leq 1$,\ the above conditional expectation can be evaluated as $M$.\ Suppose that $t_i < T_j $,\ since  
   $I_{t_i}^{T_j}-I_{t_{i-1}}^{T_j} =-\int_{t_{i-1}}^{t_i} V_{s}ds +\int_{t_{i-1}}^{t_i}\sigma_{s }^{T_j}(\theta) dW_s,
   $
the It\^o's isometry yields
   \[\begin{split}
       E\left|E_i \left[\frac{|I_{t_i}^{T_j}-I_{t_{i-1}}^{T_j}|^2}{\Delta}-\sigma_{t_{i-1}}^{T_j}(\theta)^2\right]\right|
       &\leq \frac{1}{\Delta}E\left|\int_{t_{i-1}}^{t_i} V_{s}ds\right|^2\\
       &+\frac{2}{\Delta}E\left[\left|\int_{t_{i-1}}^{t_i} V_{s}ds\right|^2\right]^{1/2}E\left[\int_{t_{i-1}}^{t_i} \sigma_s^{T_j}(\theta)^2ds\right]^{1/2} \\
       &+\frac{1}{\Delta}E\left[\int_{t_{i-1}}^{t_i} |\sigma_s^{T_j}(\theta)^2 -\sigma_{t_{i-1}}^{T_j}(\theta)^2|ds\right].
   \end{split}\]
   By Lemmas~\ref{OWImeorignOIAMOIMSdfkLQ} and \ref{EOrignrotihomoISADOISnrgoin},
   The right-hand side is bounded by $C \Delta^{1/2}$.
   Noting that there is at most one term with $t_{i-1} <T_j \leq t_i$,\ we obtain $E\left[\left|\sum_{i=1}^n E_i [\chi_{i,d}^\epsilon(\xi)] \right|\right] \leq C \Delta^{1/2}$ and hence we conclude that.

   We now turn to the proof of the second claim.\ Since
      \[\chi_{i,d}^\epsilon(\xi)^2 \leq \frac{2d}{\epsilon ^2n^2d^2 } \sum_{j=1}^d \left(\frac{|I_{t_i}^{T_j}-I_{t_{i-1}}^{T_j}|^4}{\Delta^2}+\sigma_{t_{i-1}}^{T_j}(\theta)^4\right),\]
      we arrive at the bound $E[\sum_{i=1}^n E_i[\chi_{i,d}^\epsilon(\xi)^2]] \leq C \Delta$ from Lemma~\ref{OWImeorignOIAMOIMSdfkLQ} and \ref{EOEITnhgoedtihmoIAOIDGHOIDFNhoiAMOIWS}.\ This completes the proof.
   \end{proof}

   \begin{lem}
       Under \hypref{EIrugnoIASNOsirgmoiNAOISmfgoiN},\ for all $\epsilon$,
       \[\sup_{\xi} \left|U_{n,d}^\epsilon(\xi)-\frac{1}{n} \sum_{i=1}^n U_{t_{i-1},d}^\epsilon(\xi)\right| \underset{n,d\to\infty}{\longrightarrow} 0,\ \text{in prob}.\]
   \end{lem}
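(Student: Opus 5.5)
The difference is exactly $X_{n,d}^\epsilon(\xi)=\sum_{i=1}^n\chi_{i,d}^\epsilon(\xi)$, so the claim is that $X_{n,d}^\epsilon\to0$ uniformly on $\Theta$ in probability. I would first get pointwise convergence and then upgrade it to uniform convergence through tightness in $C(\Theta)$. The upgrade uses a Sobolev-type (or Kolmogorov-type) criterion: for compact convex $\Theta\subset\mathbb{R}^q$ and $p>q$, one has $\sup_\xi|f(\xi)|\le C\bigl(\|f\|_{L^p(\Theta)}+\|\nabla f\|_{L^p(\Theta)}\bigr)$. It is simpler here to use a plain Lipschitz-modulus argument instead: show that $\sup_\xi|\partial_\alpha X_{n,d}^\epsilon(\xi)|$ is bounded in probability, and combine this with pointwise convergence on a finite $\delta$-net of $\Theta$.

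\textbf{Pointwise step.} For fixed $\xi$, Lemma~\ref{EOirgrthoimOASINOSdigdghNOSINR} gives $\sum_i E_i[\chi_{i,d}^\epsilon(\xi)]\to0$ and $\sum_i E_i[\chi_{i,d}^\epsilon(\xi)^2]\to0$ in probability. By the standard Lenglart/Genon-Catalot--Jacod lemma for triangular arrays (e.g.\ Lemma 9 of~\cite{genon1993estimation}), this yields $X_{n,d}^\epsilon(\xi)\to0$ in probability. The same holds simultaneously for finitely many $\xi$.

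\textbf{Derivative bound.} Write $\chi_{i,d}^\epsilon(\xi)=\frac1n g_{i}(\xi)\,(|\Delta I_i^d|^2-|\sigma^d_{t_{i-1}}(\theta)|^2)/d$ with $g_i(\xi)=(|\hat\sigma_{t_{i-1}}(\xi)|^2/d+\epsilon)^{-1}$. Then
\[
\partial_\alpha g_i(\xi)=-\frac{2\,\partial_\alpha\hat\sigma_{t_{i-1}}(\xi)^{\mathsf T}\hat\sigma_{t_{i-1}}(\xi)/d}{(|\hat\sigma_{t_{i-1}}(\xi)|^2/d+\epsilon)^2}.
\]
By Lemma~\ref{OWImeorignOIAMOIMSdfkLQ}(ii) and Cauchy--Schwarz, $|\partial_\alpha g_i(\xi)|\le 2M^2/\epsilon^2$ uniformly in $\xi,i,d$. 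Hence
\[
\sup_\xi|\partial_\alpha X_{n,d}^\epsilon(\xi)|\le \frac{2M^2}{\epsilon^2}\,\frac1{nd}\sum_{i=1}^n\sum_{j=1}^d\Bigl(\frac{|I_{t_i}^{T_j}-I_{t_{i-1}}^{T_j}|^2}{\Delta}+\sigma_{t_{i-1}}^{T_j}(\theta)^2\Bigr).
\]
By Lemma~\ref{EOEITnhgoedtihmoIAOIDGHOIDFNhoiAMOIWS}(i) and Lemma~\ref{OWImeorignOIAMOIMSdfkLQ}(iii), the double average has bounded expectation. The factor $M^2$ has all moments, so the product is $O_P(1)$; a H\"older split handles the product. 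This step needs some care, and it is the main technical point, though it looks routine given the uniform-in-$\xi$ bounds already in Lemma~\ref{OWImeorignOIAMOIMSdfkLQ}.

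\textbf{Conclusion.} By convexity of $\Theta$, the mean value theorem gives $|X_{n,d}^\epsilon(\xi)-X_{n,d}^\epsilon(\xi')|\le L_{n,d}|\xi-\xi'|$ with $L_{n,d}=O_P(1)$. Given $\eta>0$, choose $K$ with $P(L_{n,d}>K)<\eta$ and a finite $\eta/K$-net $\{\xi_1,\dots,\xi_N\}$ of $\Theta$. Then
\[
\sup_\xi|X_{n,d}^\epsilon(\xi)|\le\max_{m\le N}|X_{n,d}^\epsilon(\xi_m)|+\eta
\]
off an event of probability less than $\eta$. The maximum tends to $0$ in probability by the pointwise step, which proves the lemma.
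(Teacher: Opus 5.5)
Your proposal is correct and follows the paper's proof. You get pointwise convergence of $X_{n,d}^\epsilon(\xi)$ from Lemma~9 of \cite{genon1993estimation} combined with Lemma~\ref{EOirgrthoimOASINOSdigdghNOSINR}, and then upgrade to uniform convergence using a bound on $\sup_\xi|\nabla_\xi X_{n,d}^\epsilon(\xi)|$ that is uniform in $(n,d)$. The differences are cosmetic: the paper bounds $E[\sup_\xi|\nabla_\xi X_{n,d}^\epsilon|]$ via Cauchy--Schwarz with fourth moments, whereas you only need tightness (no H\"older split is required, since a product of tight sequences is tight), and you write out the finite-net argument that the paper leaves implicit.
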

   \begin{proof}
       To prove the claim,\ it suffices to show $\sup_{n,d}E[\sup_\xi |\nabla_\xi X_{n,d}^\epsilon(\xi)| ]<\infty$ and $X_{n,d}^\epsilon(\xi)\xrightarrow{p}0$ as $n,d \rightarrow 0$ for all $\xi$.
       
       First,\ it follows from Lemma~$9$ of \cite{genon1993estimation} and Lemmas~\ref{EOirgrthoimOASINOSdigdghNOSINR} that for any $\xi$ and $\epsilon$,\ $X_{n,d}^\epsilon(\xi)$ converges in probability to $0$ as $n,d \rightarrow \infty$.

       Next,\ we show $\sup_{n,d}E[\sup_\xi |\nabla_\xi X_{n,d}^\epsilon(\xi)| ]<\infty$.\ We obtain
       \[\nabla_\xi X_{n,d}^\epsilon(\xi) =\frac{1}{n}\sum_{i=1}^n \left(-\frac{\nabla_\xi|\hat{\sigma}_{t_{i-1}}(\xi)|^2}{(|\hat{\sigma}_{t_{i-1}}(\xi)|^2 +\epsilon d)^2}\right)(|\Delta I_i^d|^2 -|\sigma_{t_{i-1}}^d (\theta)|^2).\]
       Hence, as in the proof of Lemma~\ref{EOirgrthoimOASINOSdigdghNOSINR},
    \[
        E[\sup_\xi |\nabla_\xi X_{n,d}^\epsilon(\xi)|] \leq \frac{C}{ nd} \sum_{i=1}^n \sum_{j=1}^d  E\left[\frac{|I_{t_i}^{T_j}-I_{t_{i-1}}^{T_j}|^4}{\Delta^2}+\sigma_{t_{i-1}}^{T_j}(\theta)^4\right]^{1/2}\leq C.
   \]
  This establishes the assertion.
   \end{proof}

   \begin{lem}
   Under~\hypref{EIrugnoIASNOsirgmoiNAOISmfgoiN}, for all $\epsilon$,
       \[\sup_\xi \left|\frac{1}{n} \sum_{i=1}^n U_{t_{i-1},d}^\epsilon(\xi) - U_d^\epsilon(\xi)\right|\underset{n,d\to\infty}{\longrightarrow} 0,\ \text{in prob}.\]
   \end{lem}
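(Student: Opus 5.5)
We have to show that the Riemann sum $\frac1n\sum_i U_{t_{i-1},d}^\epsilon(\xi)$ approximates $U_d^\epsilon(\xi)=\int_0^1 U_{t,d}^\epsilon(\xi)\,dt$ uniformly in $\xi$. We write the difference as
\[
\frac1n\sum_{i=1}^n U_{t_{i-1},d}^\epsilon(\xi)-U_d^\epsilon(\xi)=\sum_{i=1}^n\int_{t_{i-1}}^{t_i}\bigl(U_{t_{i-1},d}^\epsilon(\xi)-U_{t,d}^\epsilon(\xi)\bigr)\,dt,
\]
and then bound $E\bigl[\sup_\xi|U_{s,d}^\epsilon(\xi)-U_{t,d}^\epsilon(\xi)|\bigr]$ by $C|t-s|^{1/2}$ with $C$ independent of $d$. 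This gives an $L^1$ bound of order $\Delta^{1/2}$, which is enough.

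For fixed $\xi$, put $a_t\coloneqq|\hat\sigma_t(\xi)|^2/d$ and $b_t\coloneqq|\sigma_t^d(\theta)|^2/d$. Then
\[
U_{t,d}^\epsilon(\xi)=\log(a_t+\epsilon)+\frac{b_t+\epsilon}{a_t+\epsilon}.
\]
On $a\ge0$ the map $a\mapsto\log(a+\epsilon)$ is $\epsilon^{-1}$-Lipschitz, and $(a,b)\mapsto(b+\epsilon)/(a+\epsilon)$ satisfies
\[
\Bigl|\frac{b_t+\epsilon}{a_t+\epsilon}-\frac{b_s+\epsilon}{a_s+\epsilon}\Bigr|\le \epsilon^{-1}|b_t-b_s|+\epsilon^{-2}(b_s+\epsilon)|a_t-a_s|.
\]
By Lemma~\ref{OWImeorignOIAMOIMSdfkLQ}, $b_s\le M^2$ and $a_t\le M^2$, so
\[
|U_{t,d}^\epsilon(\xi)-U_{s,d}^\epsilon(\xi)|\le C_\epsilon(1+M^2)\bigl(|a_t-a_s|+|b_t-b_s|\bigr).
\]
Next we use $\bigl||x|^2-|y|^2\bigr|\le|x-y|(|x|+|y|)$ together with the bounds $|\hat\sigma_t|\le M\sqrt d$ and $|\sigma_t^d|\le M\sqrt d$ from Lemma~\ref{OWImeorignOIAMOIMSdfkLQ}. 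This gives
\[
|a_t-a_s|\le 2M\,|\hat\sigma_t(\xi)-\hat\sigma_s(\xi)|/\sqrt d,\qquad |b_t-b_s|\le 2M\,|\sigma_t^d(\theta)-\sigma_s^d(\theta)|/\sqrt d.
\]
The $b$ term does not depend on $\xi$. By Cauchy--Schwarz and Lemma~\ref{EOrignrotihomoISADOISnrgoin}, $E|\sigma_t^d-\sigma_s^d|^2\le Cd|t-s|$, so its contribution is $O(|t-s|^{1/2})$, using that $M$ has moments of all orders.

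The main obstacle is the supremum over $\xi$ in the $\hat\sigma$ term, because Lemma~\ref{EOEITnhgoedtihmoIAOIDGHOIDFNhoiAMOIWS}(ii) holds only for each fixed $\xi$. To handle it, go back to the decomposition $A_1^j+A_2^j$ in the proof of that lemma and take the sup inside. The bound on $A_1^j$ is $M|t-s|$ uniformly in $\xi$, since $k$ is $\xi$-uniformly Lipschitz. The bound on $A_2^j$ is $C\sum_l(T_{l+1}-T_l)|I_t^{T_l}-I_s^{T_l}|+C|I_t^{T_j}-I_s^{T_j}|$, where $C$ does not depend on $\xi$. Hence $\sup_\xi|\hat\sigma_t(\xi)-\hat\sigma_s(\xi)|^2$ is dominated by a $\xi$-free random variable whose expectation is at most $Cd|t-s|$, by Lemma~\ref{EOEITnhgoedtihmoIAOIDGHOIDFNhoiAMOIWS}(i). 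As a fallback, one could instead use a Sobolev embedding in $\xi$ combined with parts (iii)--(iv) of that lemma.

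Combining these estimates with Hölder's inequality to separate the factors of $M$ gives
\[
E\sup_\xi\Bigl|\frac1n\sum_{i=1}^n U_{t_{i-1},d}^\epsilon(\xi)-U_d^\epsilon(\xi)\Bigr|\le C\Delta^{1/2}\to0,
\]
uniformly in $d$. The claimed convergence in probability then follows from Markov's inequality.
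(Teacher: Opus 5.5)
Your proof is correct, but it handles the supremum over $\xi$ differently from the paper. The paper first proves only a fixed-$\xi$ estimate, $E|\frac1n\sum_{i}U_{t_{i-1},d}^\epsilon(\xi)-U_d^\epsilon(\xi)|\le C\Delta^{1/2}$. For this it uses the same Lipschitz bound $|U_{t,d}^\epsilon(\xi)-U_{s,d}^\epsilon(\xi)|\le Md^{-1/2}(|\hat\sigma_t(\xi)-\hat\sigma_s(\xi)|+|\sigma_t^d(\theta)-\sigma_s^d(\theta)|)$ that you derive, together with the fixed-$\xi$ Lemma~\ref{EOEITnhgoedtihmoIAOIDGHOIDFNhoiAMOIWS}(ii). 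It then upgrades pointwise convergence in probability to uniform convergence on the compact set $\Theta$ via the a.s.\ bound $|\nabla_\xi U_{t,d}^\epsilon(\xi)|\le M$. That bound makes both $\frac1n\sum_i U_{t_{i-1},d}^\epsilon$ and $U_d^\epsilon$ Lipschitz in $\xi$, with a random constant independent of $n,d$.

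You instead take the supremum inside the expectation. In the decomposition $A_1^j+A_2^j$ the parameter enters only through the weights $k(\xi,\cdot)$, whose sup norm and time-Lipschitz constant are uniform over $\Theta$. Hence $\sup_\xi|\hat\sigma_t(\xi)-\hat\sigma_s(\xi)|^2$ is dominated by a $\xi$-free variable with mean at most $Cd|t-s|$. Your route gives the quantitative bound $E\sup_\xi|\cdot|\le C\Delta^{1/2}$, uniformly in $d$, and avoids both computing $\nabla_\xi U_{t,d}^\epsilon$ and the equicontinuity step. The paper's route needs only fixed-parameter moment bounds, which is why the same device is reused in the proofs of Theorem~\ref{EORigeothimoISNAOFISNrogsdifghaosfigneso} and Corollary~\ref{EEGRTOAINoinoidsnSOAI}.

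Two minor remarks. First, with the convention $k(\xi,t)=0$ for $t<0$, the kernel is not Lipschitz across $0$. So your bound on $A_1^j$, and the bound on the summation-by-parts coefficients $k(\xi,T_l-s)-k(\xi,T_{l+1}-s)$, need a short case check using $I_t^T=0$ for $T\le t$ and $I_t^T\le M(T-t)$. The extra terms are at most $CM|t-s|$ and still $\xi$-free, so your argument is unaffected; the paper's Lemma~\ref{EOEITnhgoedtihmoIAOIDGHOIDFNhoiAMOIWS}(ii) glosses over the same point. Second, your Sobolev-embedding fallback is not needed. As stated, it relies only on the second moments supplied by Lemma~\ref{EOEITnhgoedtihmoIAOIDGHOIDFNhoiAMOIWS}(iii)--(iv), so it would cover only small $q$ unless you first establish higher-moment versions.
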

   \begin{proof}
       We set $Y_{n,d}^\epsilon(\xi) \coloneqq \sum_{i=1}^n U_{t_{i-1},d}^\epsilon (\xi)/n$.\ To prove the assertion,\ it is enough to show\\ $\sup_{\xi,n,d}|\nabla_\xi Y_{n,d}^\epsilon(\xi)| <\infty$,\ $\sup_{\xi,d}|\nabla_\xi U_d^\epsilon(\xi)| <\infty$ and 
       $|Y_{n,d}^\epsilon(\xi) -U_d^\epsilon(\xi)| \xrightarrow{p}0$ as $n,d \rightarrow 0$ for all $\xi$.

       We first show that 
       \[\begin{split}
           &\nabla_\xi U_{t,d}^\epsilon (\xi) =\frac{\nabla_\xi |\hat{\sigma}_{t}(\xi)|^2}{|\hat{\sigma}_{t}(\xi)|^2+\epsilon d}-\frac{(|\sigma_{t}^d(\theta)|^2+\epsilon d)\nabla_\xi |\hat{\sigma}_{t}(\xi)|^2}{(|\hat{\sigma}_{t}(\xi)|^2+\epsilon d)^2}.
       \end{split}\]
       Thus, $|\nabla_\xi U_{t,d}^\epsilon (\xi)| \leq M$ holds for every $\xi,t,d$ and consequently $\sup_{\xi,n,d}|\nabla_\xi Y_{n,d}^\epsilon(\xi)| <\infty$,\\
       $\sup_{\xi,d}|\nabla_\xi U_d^\epsilon(\xi)| <\infty$ follows.

       Next,\ decomposing the difference into the logarithmic term and the fractional term, and using elementary estimates, we obtain
    \[|U_{t,d}^\epsilon(\xi) -U_{s,d}^\epsilon(\xi)| \leq \frac{M}{\sqrt{d}} |\hat{\sigma}_t(\xi)-\hat{\sigma}_s(\xi)| + \frac{M}{\sqrt{d}}|\sigma_t^d(\theta)-\sigma_s^d(\theta)|.\]
    Consequently, $E[|U_{t,d}^\epsilon(\xi)-U_{s,d}^\epsilon(\xi)|] \leq C|t-s|^{1/2}$ which in turn yields $E[|Y_{n,d}^\epsilon (\xi) - U_d^\epsilon(\xi)|] \leq C\Delta^{1/2}$.\ Hence, we obtain $|Y_{n,d}^\epsilon(\xi) -U_d^\epsilon(\xi)| \xrightarrow{p}0$ as $n,d \rightarrow 0$ for all $\xi$.
   \end{proof}

\begin{proof}[Proof of Theorem~\textnormal{\ref{EORigeothimoISNAOFISNrogsdifghaosfigneso}}]-- From the previous results and the triangle inequality,\ it suffices to show $\sup_{\xi}|U_d^\epsilon(\xi)-U^\epsilon(\xi)| \xrightarrow[]{p} 0$ as $d \rightarrow \infty$ for any $\epsilon$.\ Furthermore,\ it is enough to prove $\sup_{\xi,d}|\nabla_\xi U_d^\epsilon(\xi)| <\infty$,\ $\sup_\xi |\nabla_\xi U^\epsilon(\xi)|<\infty$ and $U_d^\epsilon(\xi) \xrightarrow{p} U^\epsilon(\xi)$ as $d \rightarrow 0$ for all $\xi$.\ However,\ $\sup_{\xi,d}|\nabla_\xi U_d^\epsilon(\xi)| <\infty$ has already been proved in the previous lemma and $\sup_\xi |\nabla_\xi U^\epsilon(\xi)|<\infty$ follows from the twice continuous differentiability of $U^\epsilon(\cdot)$.

        We now prove $U_d^\epsilon(\xi) \xrightarrow{p} U^\epsilon(\xi)$ as $d \rightarrow 0$ for all $\xi$.\ To begin with,\ by \hypref{EORigndgfhoimOAINWOirngdofgimOASINSOfiogm},\ for any $\xi$ and $t <1$,\ we have
        \[\frac{1}{d} |\sigma_t^d(\xi)|^2 =\frac{1}{d} \sum_{j=1}^d \sigma_t^{T_j}(\xi)^2 \rightarrow  \int_0^1 \sigma_t^x(\xi)^2 \mu(dx),\ d \rightarrow \infty\]
        with probability one.\ Moreover,  
        by Lipschitz continuity of k,
         $|\hat{\sigma}_t^{T_j}(\xi)-\sigma_t^{T_j}(\xi)|\leq M \max_{k} |T_k-T_{k-1}|$,
    and by \hypref{EORigndgfhoimOAINWOirngdofgimOASINSOfiogm} again,
    $||\hat{\sigma}_t(\xi)|^2-|\sigma_t^d(\xi)|^2|/d \leq M\max_{k} |T_k-T_{k-1}| \rightarrow 0,\ d \rightarrow \infty$.
    Hence,\ the dominated convergence theorem yields $U_d^\epsilon(\xi) \xrightarrow{p} U^\epsilon(\xi)$ as $d \rightarrow 0$.\ Thus, the assertion has been established.
\end{proof}

\begin{proof}[Proof of Corollary~\textnormal{\ref{EORginseoihmOIANOisrgmoiNAOiedrg}}]
    -- By the result of Theorem~\ref{EORigeothimoISNAOFISNrogsdifghaosfigneso},\ if the limit $U^\epsilon (\cdot)$ has a unique minimizer $\xi=\theta$ almost surely,\ then the claim follows.
    
    First,\ since the function $f(x)=\log(x)+y/x,\ x,y > 0$ has the unique minimizer $x=y$,\ it follows that $U^\epsilon(\xi) \geq U^\epsilon (\theta)$ and if $U^\epsilon(\xi)=U^\epsilon(\theta)$,\ then we obtain
    \begin{equation}
    \label{EROignetohgimOWINWEOrigmsdoiOSIFbgosifh}
    \int_0^1 \sigma_t^x(\xi)^2 \mu(dx) =\int_0^1 \sigma_t^x(\theta)^2 \mu(dx),\ t \text{-a.e.}\end{equation}

    Here we assume that the density of $\mu$ is given by a function $g>0$,\ and fix $t<1$ arbitrarily.\ Then,\ by Fubini's theorem,\ we obtain
    \[\begin{split}
        \int_0^1 \sigma_t^x(\xi)^2 \mu(dx)&=\int_0^{1-t}\int_0^{1-t} k(\xi,r)k(\xi,r')V_t^{t+r}V_t^{t+r'}\int_{t+r \vee r'}^1 g(u)dudrdr'\\
        &\eqqcolon \int_0^{1-t}\int_0^{1-t} k(\xi,r)k(\xi,r') K_t(r,r')drdr'.
    \end{split}\]
    Furthermore,\ if we set $h(t) \coloneqq k(\xi,t)+k(\theta,t)$,\ $\phi(t,r) \coloneqq V_t^{t+r}$ and $G(t,r) \coloneqq \int_{t+r}^1 g(u)du$,\ then we obtain
    \[\begin{split}
        &\int_0^1 \sigma_t^x(\xi)^2 \mu(dx)-\int_0^1 \sigma_t^x(\theta)^2 \mu(dx)
        =\int_0^{1-t} (k(\xi,r)-k(\theta,r))q(t,r)dr,\\
        &q(t,r)\coloneqq \phi(t,r)\left(G(t,r)\int_0^r h(r')\phi(t,r')dr'+\int_r^{1-t} h(r') \phi(t,r')G(t,r')dr'\right)
    \end{split}\]
    by symmetry of $K_t$.\ Moreover,\ since $g,h,\phi>0$,\ it follows that $q(t,\cdot) >0,\ \text{on}\ (0,1-t)$.

    Assume that $\xi \neq \theta$.\ Then from~\hypref{ERogienthoimOAINEOITRgdnofgnoINWSR},\ there exists $\delta>0$ such that $k(\xi,\cdot)-k(\theta,\cdot)>0$ (or $<0$) on $(0,\delta)$.\ Then,\ for all $t \in (1-\delta,1)$,\ we obtain
    \[\int_0^{1-t} (k(\xi,r)-k(\theta,r))q(t,r)dr >0\ (\text{or} <0),
     \]
    which contradicts \eqref{EROignetohgimOWINWEOrigmsdoiOSIFbgosifh}.\ Hence,\ we must have $\xi=\theta$.
\end{proof}
\section{Asymptotic Mixed Normality}
\label{OeigoainOISFDMgodfighoISB}
Unless explicitly mentioned otherwise,\ we will work under the assumptions~\hypref{EIrugnoIASNOsirgmoiNAOISmfgoiN},\\
\hypref{ERogienthoimOAINEOITRgdnofgnoINWSR} and \hypref{EORigndgfhoimOAINWOirngdofgimOASINSOfiogm}.\ We continue to regard $\theta$ as the true parameter value and assume that it is an interior point of $\Theta$.\ Let $\hat{\theta}_{n,d}^\epsilon$ be the minimal solution of $U_{n,d}^\epsilon(\cdot)$,\ and $\Omega_{n,d}^\epsilon$ be the set where $\hat{\theta}_{n,d}^\epsilon$ belongs to the interior of $\Theta$,\ so Corollary~\ref{EORginseoihmOIANOisrgmoiNAOiedrg} yields 
\begin{equation}
\label{EOrigneothinoIMASDOfisdnroginmoINASD}
    P(\Omega_{n,d}^\epsilon) \rightarrow 1.
\end{equation}

In this case,\ since $\nabla_\xi U_{n,d}^\epsilon(\hat{\theta}_{n,d}^\epsilon)=0$ on $\Omega_{n,d}^\epsilon$,\ it follows from Taylor's theorem that
\[    \begin{split}
0&=L_{n,d}^\epsilon+B_{n,d}^\epsilon\cdot \sqrt{n}(\hat{\theta}_{n,d}^\epsilon-\theta)\ \text{on}\ \Omega_{n,d}^\epsilon,\\
L_{n,d}^\epsilon &\coloneqq \sqrt{n}\nabla_\xi U_{n,d}^\epsilon(\theta),\ B_{n,d}^\epsilon \coloneqq \int_0^1 \nabla^2_\xi U_{n,d}^\epsilon(\theta+u(\hat{\theta}_{n,d}-\theta))du.
    \end{split}\]
    
In order to establish the asymptotic mixed normality,\ we first show the convergence of $L_{n,d}^\epsilon$ and $B_{n,d}^\epsilon$.
 As a first step,\ we prove that an approximation $\nabla^2_\xi U_{n,d}^\epsilon(\theta)$ of $B_{n,d}^\epsilon$ converges in probability to $B^\epsilon$~\eqref{ErogirethoimOIWNOEIrgndofighoIASN},\ where the $(\alpha,\beta)$-component of $B^\epsilon$ is restated as
\[(B^\epsilon)^{\alpha,\beta} = 4 \int_0^1 \frac{\left(\int_0^1 \partial_\alpha \sigma_t^x(\theta)\sigma_t^x(\theta)\mu(dx)\right)\left(\int_0^1 \partial_\beta \sigma_t^x(\theta)\sigma_t^x(\theta)\mu(dx)\right)}{\left(\int_0^1 \sigma_t^x(\theta)^2\mu(dx)+\epsilon\right)^2}dt.\]
The structure of the proof is essentially the same as that in \cref{02}.\ In what follows,\ we shall denote $\sigma_t^d(\theta)$ and $\hat{\sigma}_t(\theta)$ by $\sigma_t^d$ and $\hat{\sigma}_t$,\ respectively.
\begin{lem}
\label{EORIngotihmoSIFOIGNoaiNDF}
For any $\epsilon$,\ $\nabla_\xi^2U_{n,d}^\epsilon(\theta)$ converges to $B^\epsilon$ in probability as $n,d \rightarrow \infty$.
\end{lem}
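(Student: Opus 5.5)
Write $s_{t,d}(\xi)\coloneqq|\hat\sigma_t(\xi)|^2/d$, so that $F_\epsilon(\xi,t,x)=\log(s_{t,d}(\xi)+\epsilon)+(|x|^2/d+\epsilon)/(s_{t,d}(\xi)+\epsilon)$. The first step is to compute the Hessian explicitly. With $a\coloneqq s_{t,d}(\xi)+\epsilon$ and $r\coloneqq|x|^2/d+\epsilon$, we have
\[
\partial_\alpha\partial_\beta F_\epsilon=\frac{\partial_\alpha\partial_\beta s}{a}\Bigl(1-\frac{r}{a}\Bigr)-\frac{\partial_\alpha s\,\partial_\beta s}{a^2}\Bigl(1-\frac{2r}{a}\Bigr).
\]
Here $\partial_\alpha s=2\partial_\alpha\hat\sigma_t^\mathsf{T}\hat\sigma_t/d=2\hat e_{t,d}^\alpha$.

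I then split $\nabla_\xi^2U_{n,d}^\epsilon(\theta)$ in the same way as in \cref{02}. The first piece is a martingale-type remainder in which $|\Delta I_i^d|^2/d$ is replaced by $|\sigma_{t_{i-1}}^d|^2/d$. The second is the Riemann sum $\frac1n\sum_i\partial_\alpha\partial_\beta F_\epsilon(\theta,t_{i-1},\sigma^d_{t_{i-1}})$.

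For the first piece, the difference is
\[
\frac1n\sum_i c_{i}^{\alpha\beta}\,\frac{|\Delta I_i^d|^2-|\sigma^d_{t_{i-1}}|^2}{d}.
\]
The coefficient $c_i^{\alpha\beta}=-\partial_\alpha\partial_\beta s/a^2+2\partial_\alpha s\,\partial_\beta s/a^3$ is $\mathscr{F}_{t_{i-1}}$-measurable and bounded by $M/\epsilon^2$ (or by $M/\epsilon^3$), using Lemma~\ref{OWImeorignOIAMOIMSdfkLQ}(ii). The argument of Lemma~\ref{EOirgrthoimOASINOSdigdghNOSINR} applies verbatim. The conditional means sum to $O_p(\Delta^{1/2})$, via Itô's isometry, Lemma~\ref{EOrignrotihomoISADOISnrgoin} and the single boundary index where $t_{i-1}<T_j\le t_i$. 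The conditional second moments sum to $O_p(\Delta)$. Lemma~9 of \cite{genon1993estimation} then gives convergence to $0$. No uniformity in $\xi$ is needed here, since we evaluate only at $\xi=\theta$.

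For the second piece, set $x=\sigma_t^d$, so that $r=|\sigma_t^d|^2/d+\epsilon$. The key simplification is that in the limit $r/a\to1$. The first term of the Hessian then vanishes, and the second becomes $4e_te_t^\mathsf{T}/(\cdot+\epsilon)^2$, which is exactly the integrand of $B^\epsilon$. I will proceed as follows.

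First, I replace the Riemann sum by $\int_0^1\partial_\alpha\partial_\beta F_\epsilon(\theta,t,\sigma_t^d)\,dt$. The integrand is Lipschitz in $(\hat\sigma,\partial\hat\sigma,\partial^2\hat\sigma,\sigma^d)/\sqrt d$ with an $M$-type constant, so Lemma~\ref{EOEITnhgoedtihmoIAOIDGHOIDFNhoiAMOIWS}(ii)--(iv) and Lemma~\ref{EOrignrotihomoISADOISnrgoin} give an $L^1$ error of $O(\Delta^{1/2})$. Some care is needed with products of $M$-bounded factors: I will use Cauchy--Schwarz together with the moment bounds of $M$.

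Second, for each fixed $t<1$ I let $d\to\infty$. Lipschitz continuity of $k$ and its $\xi$-derivatives gives $|\hat\sigma_t^j-\sigma_t^{T_j}|$, $|\partial_\alpha\hat\sigma_t^j-\partial_\alpha\sigma_t^{T_j}|$ and $|\partial_\alpha\partial_\beta\hat\sigma_t^j-\partial_\alpha\partial_\beta\sigma_t^{T_j}|$ all at most $M\max_k|T_k-T_{k-1}|$. Weak convergence $\mu_d\to\mu$ (Assumption~\ref{EORigndgfhoimOAINWOirngdofgimOASINSOfiogm}) applies because $x\mapsto\sigma_t^x$, $\partial\sigma_t^x$ and $\partial^2\sigma_t^x$ are continuous on $[0,1]$. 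Hence $\hat e_{t,d}\to e_t(\theta)$, $s_{t,d}\to\int\sigma_t^x(\theta)^2\mu(dx)$, and $\partial^2 s_{t,d}$ converges, so $r/a\to1$. Dominated convergence in $t$, with bound $M$, finishes the argument.

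I expect the main obstacle to be bookkeeping rather than any conceptual difficulty. Specifically, I need the $\partial_\alpha\partial_\beta\hat\sigma$-term to disappear in the limit; this holds because its prefactor $1-r/a$ tends to $0$, but only after both the mesh error and the $\mu_d\to\mu$ error are controlled. I also need the time-regularity estimates for the second-derivative terms, which rely on Lemma~\ref{EOEITnhgoedtihmoIAOIDGHOIDFNhoiAMOIWS}(iv).
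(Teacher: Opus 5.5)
Your proposal is correct and follows the paper's proof essentially step for step. The paper uses the same three stages. First, the martingale remainder is handled by the conditional-moment bounds of Lemma~\ref{EOirgrthoimOASINOSdigdghNOSINR} together with Lemma~9 of \cite{genon1993estimation}. Second, the Riemann sum is replaced by $\int_0^1\partial_\alpha\partial_\beta U^\epsilon_{t,d}(\theta)\,dt$ with an $L^1$ error of $O(\Delta^{1/2})$, using the increment bounds of Lemma~\ref{EOEITnhgoedtihmoIAOIDGHOIDFNhoiAMOIWS}. Third, $d\to\infty$ is handled by weak convergence and dominated convergence.

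The differences are minor. The paper localizes with the stopping times $R_l=\inf\{t : I_t^1\ge l\}$ to obtain deterministic constants $C_l$. You instead factor out the single random variable $M$, which does not depend on $n,d,i$, and work with $O_p$ bounds; this is equally valid. You also make explicit the Hessian formula and why the $\partial_\alpha\partial_\beta s$ term drops out (its prefactor $1-r/a$ tends to $0$), which the paper leaves implicit.
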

\begin{proof}
    As the beginning of the proof,\ we define the sequence of stopping times $R_l$ by $R_l \coloneqq \inf \{t \in [0,1] \mid I_t^1 \geq l\}$,\ here we adopt the convention $\inf \emptyset =\infty$.\ In this case,\ if we set $\Omega_l \coloneqq \{R_l =\infty\}$,\ then $P(\Omega_l) \rightarrow 1$ holds.\ Furthermore,\ since $I_{t \wedge R_l}^T \leq I_{t \wedge R_l}^1 \leq l$ for all $t,T$,\ there exists a constant $C_l$ such that for any $t,d$ and $\alpha,\beta=1,\ldots,q$,\ $|\hat{\sigma}_{t \wedge R_l}| \leq C_l\sqrt{d}$,\ $|\partial_\alpha \hat{\sigma}_{t \wedge R_l}| \leq C_l\sqrt{d}$ and $|\partial_\alpha \partial_\beta \hat{\sigma}_{t \wedge R_l}| \leq C_l\sqrt{d}$\ holds in the same manner as in the proof of Lemma~\ref{OWImeorignOIAMOIMSdfkLQ}.

    Fix arbitrary $\epsilon$ and $\alpha,\beta=1,\ldots,q$. 
    We define 
    \[
        \chi_{n,d}^l \coloneqq \frac{1}{n}[\partial_\alpha \partial_\beta F_\epsilon(\theta,t_{i-1},\Delta I_i^d)-\partial_\alpha \partial_\beta U_{t_{i-1},d}^\epsilon(\theta)]1_{\{t_{i-1} <R_l\}}.
    \]
    If we abbreviate $E[\cdot \mid \mathscr{F}_{t_{i-1}}]$ as $E_i[\cdot]$,\ then,
    \[\left|\sum_{i=1}^n E_i[\chi_{i,d}^l]\right| \leq \frac{C_l}{ nd}\sum_{i=1}^n |E_i[|\Delta I_i^d|^2-|\sigma_{t_{i-1}}^d|^2]|,\quad(\chi_{i,d}^l)^2 \leq \frac{C_l}{n^2d^2}(|\Delta I_i^d|^2-|\sigma_{t_{i-1}}^d|^2)^2
   \]
    for a constant $C_l$,\ and hence,\ as in the proof of Lemma~\ref{EOirgrthoimOASINOSdigdghNOSINR},\ we obtain \\
    $\sum_{i=1}^n E_i[\chi_{n,d}^l] \xrightarrow{p}0$ and $\sum_{i=1}^n E_i[(\chi_{n,d}^l)^2] \xrightarrow{p}0$,\ as $n,d \rightarrow \infty$.\ Therefore,\ by Lemma~$9$ of \cite{genon1993estimation},\ we obtain $ \sum_{i=1}^n \chi_{n,d}^l \xrightarrow{p}0$ as $n,d \rightarrow \infty$.\ Moreover,\ since $P(\Omega_l) \rightarrow 1$,\ if we set $Y_{n,d}^\epsilon \coloneqq \sum_{i=1}^n \partial_\alpha \partial_\beta U_{t_{i-1},d}^\epsilon(\theta)/n$,\ then it follows that $|\partial_\alpha \partial_\beta U_{n,d}^\epsilon(\theta)-Y_{n,d}^\epsilon| \xrightarrow{p}0$ as $n,d \rightarrow \infty$.
    
    Next,\ we shall prove $|Y_{n,d}^\epsilon -\partial_\alpha \partial_\beta U_d^\epsilon(\theta)|\xrightarrow{p}0$ as $n,d \rightarrow \infty$.
     In what follows, we use the same notation for the random
     variable $M$ and the constant $C$ appearing in the estimates.
    We first note that
    \[\begin{split}
    &|\partial_\alpha\partial_\beta U_{t,d}^\epsilon (\theta) - \partial_\alpha\partial_\beta U_{s,d}^\epsilon (\theta)| \\
    &\leq \frac{M}{\sqrt{d}} (|\hat{\sigma}_t-\hat{\sigma}_s|+|\partial_\alpha\hat{\sigma}_t-\partial_\alpha\hat{\sigma}_s| + |\partial_\beta\hat{\sigma}_t-\partial_\beta\hat{\sigma}_s| + |\partial_\alpha\partial_\beta\hat{\sigma}_t-\partial_\alpha\partial_\beta\hat{\sigma}_s| + | \sigma_t^d - \sigma_s^d|).
    \end{split}\]
Therefore, we have $E[|\partial_\alpha \partial_\beta U_{t,d}^\epsilon(\theta)-\partial_\alpha \partial_\beta U_{s,d}^\epsilon(\theta)|] \leq C|t-s|^{1/2}$.
   which in turn yields
    $E[|Y_{n,d}^\epsilon -\partial_\alpha \partial_\beta U_d^\epsilon(\theta)|] \leq C\Delta^{1/2}$.
    Consequently, $|Y_{n,d}^\epsilon -\partial_\alpha \partial_\beta U_d^\epsilon(\theta)| \xrightarrow[]{p} 0$ as $n,d \rightarrow \infty$.
       
    Finally,\ we show that $\partial_\alpha \partial_\beta U_d^\epsilon(\theta) \xrightarrow{p} (B^\epsilon)^{\alpha,\beta}$,\ as $d \rightarrow \infty$.\ 
First,\ as shown in the proof of Theorem~\ref{EORigeothimoISNAOFISNrogsdifghaosfigneso},
it follows that 
$||\hat{\sigma}_t|^2-|\sigma_t^d|^2|/d \rightarrow 0$, as $d \rightarrow \infty$.
    By Lemma~\ref{OWImeorignOIAMOIMSdfkLQ} and \hypref{EORigndgfhoimOAINWOirngdofgimOASINSOfiogm},\ we have
        \[\frac{1}{d} (\partial_\alpha\sigma_t^d)^\mathsf{T} \sigma_t^d =\frac{1}{d} \sum_{j=1}^d \partial_\alpha \sigma_t^{T_j} \sigma_t^{T_j}\rightarrow  \int_0^1 \partial_\alpha \sigma_t^x \sigma_t^x \mu(dx),\ d \rightarrow \infty\]
        with probability one.\ Moreover,\ similarly to the proof of Theorem~\ref{EORigeothimoISNAOFISNrogsdifghaosfigneso},\ we obtain$|(\partial_\alpha \hat{\sigma}_t)^\mathsf{T} \hat{\sigma}_t -(\partial_\alpha \sigma_t^d )^\mathsf{T} \sigma_t^d|/d\leq M\max_k|T_k-T_{k-1}| \rightarrow 0,\ d \rightarrow \infty.$
        Hence,\ the dominated convergence theorem yields $\partial_\alpha \partial_\beta U_d^\epsilon(\theta) \xrightarrow{p} (B^\epsilon)^{\alpha,\beta}$,\ as $d \rightarrow \infty$.\ This completes the proof of $\partial_\alpha \partial_\beta U_{n,d}^\epsilon(\theta) \xrightarrow{p} (B^\epsilon)^{\alpha,\beta}$,\ as $n,d \rightarrow \infty$. 
\end{proof}

Next,\ we verify that $B_{n,d}^\epsilon$ itself converges.

\begin{lem}
For any $\epsilon$,\ $B_{n,d}^\epsilon$ converges to $B^\epsilon$ in probability as $n,d \rightarrow \infty$.
\end{lem}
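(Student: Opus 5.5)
Since Lemma~\ref{EORIngotihmoSIFOIGNoaiNDF} already gives $\nabla_\xi^2 U_{n,d}^\epsilon(\theta)\xrightarrow{p}B^\epsilon$, it suffices to show
\[
\Bigl|B_{n,d}^\epsilon-\nabla_\xi^2 U_{n,d}^\epsilon(\theta)\Bigr|
\le \sup_{u\in[0,1]}\Bigl|\nabla_\xi^2U_{n,d}^\epsilon(\theta+u(\hat{\theta}_{n,d}^\epsilon-\theta))-\nabla_\xi^2U_{n,d}^\epsilon(\theta)\Bigr|\xrightarrow{p}0 .
\]
Because $\Theta$ is convex, $\theta+u(\hat{\theta}_{n,d}^\epsilon-\theta)\in\Theta$. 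By the mean value theorem, the right-hand side is bounded by $\sup_{\xi\in\Theta}|\nabla_\xi^3 U_{n,d}^\epsilon(\xi)|\cdot|\hat{\theta}_{n,d}^\epsilon-\theta|$. Corollary~\ref{EORginseoihmOIANOisrgmoiNAOiedrg} gives $|\hat{\theta}_{n,d}^\epsilon-\theta|\xrightarrow{p}0$, so the whole argument reduces to showing that $\sup_\xi|\nabla^3_\xi U_{n,d}^\epsilon(\xi)|$ is tight, i.e.\ $O_p(1)$.

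The difficulty is that \hypref{EIrugnoIASNOsirgmoiNAOISmfgoiN} only gives $k\in C^{2,2}$, so $\hat{\sigma}_t(\xi)$ is only twice differentiable in $\xi$ and $\nabla^3_\xi U_{n,d}^\epsilon$ need not exist. I would therefore avoid third derivatives. Instead I would show that $\nabla^2_\xi U_{n,d}^\epsilon$ is asymptotically equicontinuous in $\xi$, in the sense that
\[
\sup_{|\xi-\xi'|\le\eta}\bigl|\nabla^2_\xi U_{n,d}^\epsilon(\xi)-\nabla^2_\xi U_{n,d}^\epsilon(\xi')\bigr|\le M\,\omega(\eta)
\]
with $M=O_p(1)$ uniformly in $n,d$ and $\omega(\eta)\to0$ as $\eta\to0$. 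Combining this with consistency then closes the argument.

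To get the modulus, I would write out $\partial_\alpha\partial_\beta F_\epsilon(\xi,t,x)$ explicitly. It is a rational expression in the quantities
\[
|\hat{\sigma}_t(\xi)|^2/d,\quad \partial_\alpha\hat{\sigma}_t(\xi)^\mathsf{T}\hat{\sigma}_t(\xi)/d,\quad \partial_\alpha\hat{\sigma}_t(\xi)^\mathsf{T}\partial_\beta\hat{\sigma}_t(\xi)/d,\quad \partial_\alpha\partial_\beta\hat{\sigma}_t(\xi)^\mathsf{T}\hat{\sigma}_t(\xi)/d,
\]
multiplied by $1$ or by $(|x|^2+\epsilon d)/d$, with denominators bounded below by powers of $\epsilon$. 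Each such quantity is bounded by $M$ via Lemma~\ref{OWImeorignOIAMOIMSdfkLQ}(ii). For continuity in $\xi$, I use that $k,\partial_\alpha k,\partial_\alpha\partial_\beta k$ are uniformly continuous on the compact set $\Theta\times[0,1]$, with a common modulus $\omega$. Combined with $\sum_l (I_t^{T_l}-I_t^{T_{l-1}})\le M$, this gives $|\partial^\gamma\hat{\sigma}_t^j(\xi)-\partial^\gamma\hat{\sigma}_t^j(\xi')|\le M\omega(|\xi-\xi'|)$ for every multi-index $|\gamma|\le2$. Hence each normalized quantity above varies by at most $M\omega(|\xi-\xi'|)$, uniformly in $t$ and $d$.

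Averaging over $i$, the only unbounded factor is $\frac1n\sum_i |\Delta I_i^d|^2/d$. Its expectation is bounded by Lemma~\ref{EOEITnhgoedtihmoIAOIDGHOIDFNhoiAMOIWS}(i), so it is $O_p(1)$. To avoid products of random bounds, one could instead localize with the stopping times $R_l$ as in the proof of Lemma~\ref{EORIngotihmoSIFOIGNoaiNDF}, obtaining deterministic constants $C_l$ on $\Omega_l$. Either way,
\[
|B_{n,d}^\epsilon-\nabla^2_\xi U_{n,d}^\epsilon(\theta)|\le O_p(1)\,\omega(|\hat{\theta}_{n,d}^\epsilon-\theta|)\xrightarrow{p}0 .
\]
The main obstacle is this step: getting uniform continuity in $\xi$ of the second derivatives uniformly in $t$, $d$ and $n$ without a third derivative. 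I expect it to be handled by the uniform continuity of $\partial^2 k$ on the compact set together with monotonicity of $I_t^T$ in $T$.
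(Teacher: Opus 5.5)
Your proposal is correct and is essentially the paper's argument. The paper also avoids third derivatives: it transfers the uniform continuity of $k,\partial_\alpha k,\partial_\alpha\partial_\beta k$ in $\xi$ to $\hat{\sigma}_t$ through the bound $I_t^T\le l$ on the localized event $\Omega_l$. It obtains $E[\sup_{|\zeta|\le a}|\partial_\alpha\partial_\beta U_{n,d}^\epsilon(\theta+\zeta)-\partial_\alpha\partial_\beta U_{n,d}^\epsilon(\theta)|1_{\Omega_l}]\le \frac{C_l\eta(a)}{nd}\sum_{i=1}^n(E[|\Delta I_i^d|^2]+d)\le C_l\eta(a)$, and combines this with consistency through deterministic radii $a_{n,d}\downarrow 0$. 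Your random-modulus bound $O_p(1)\,\omega(|\hat{\theta}_{n,d}^\epsilon-\theta|)$ is an equivalent packaging of that same estimate.
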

\begin{proof}
Let $\epsilon$ be arbitrary.\ It suffices to show that $N_{n,d}(a_{n,d}) \xrightarrow[]{p}0$ as $n,d \rightarrow \infty$ for any sequence $a_{n,d} \downarrow 0$,\ where $N_{n,d}(a) \coloneqq \sup_{\zeta:|\zeta| \leq a} |\nabla^2_\xi U_{n,d}^\epsilon(\theta+\zeta)-\nabla^2_\xi U_{n,d}^\epsilon(\theta)|$,\ from the previous result.

    First,\ define the stopping times $R_l$ and the events $\Omega_l$ in the same way as in the proof of Lemma~\ref{EORIngotihmoSIFOIGNoaiNDF}.\ Moreover,\ take $a >0$ sufficiently small,\ and let $\zeta \in \Theta$ be arbitrary with $|\zeta| \leq a$.\ Then,\ since $I_t^T \leq l$ for all $t,T$ on $\Omega_l$,\ we have
    \[|\hat{\sigma}_t(\theta+\zeta)-\hat{\sigma}_t(\theta)| \leq l\sqrt{d}\sup_{\zeta:|\zeta | \leq a ,t }|k(\theta+\zeta,t)-k(\theta,t)|,\ \text{on}\ \Omega_l.\]
    Here,\ by the uniform continuity of $k$,\ the right-hand side converges to $0$ as $a \downarrow 0$.\ Similarly,\ there exists a function $\eta(a) \rightarrow 0$ as $a \downarrow 0$ such that,\ for any $t,\alpha,\beta$,\ $|\hat{\sigma}_t(\theta+\zeta)-\hat{\sigma}_t(\theta)| \leq l\sqrt{d}\eta(a)$,\ $|\partial_\alpha\hat{\sigma}_t(\theta+\zeta)-\partial_\alpha\hat{\sigma}_t(\theta)| \leq l\sqrt{d}\eta(a)$ and $|\partial_\alpha\partial_\beta\hat{\sigma}_t(\theta+\zeta)-\partial_\alpha\partial_\beta\hat{\sigma}_t(\theta)| \leq l\sqrt{d}\eta(a)$ holds on $\Omega_l$.\ In this case, we obtain the estimate with  a constant $C_l$,
    \[
        E[\sup_{\zeta:|\zeta| \leq a}|\partial_\alpha\partial_\beta U_{n,d}^\epsilon(\theta+\zeta)-\partial_\alpha\partial_\beta U_{n,d}^\epsilon(\theta)|1_{\Omega_l}]\leq \frac{C_l \eta(a)}{nd} \sum_{i=1}^n (E[|\Delta I_i^d|^2]+d),
    \]
     in the same way as in the proof of Lemma~\ref{EORIngotihmoSIFOIGNoaiNDF}.\ Furthermore,\ by Lemma~\ref{EOEITnhgoedtihmoIAOIDGHOIDFNhoiAMOIWS},\ there exists a constant $C_l$, \[E[\sup_{\zeta:|\zeta| \leq a}|\partial_\alpha\partial_\beta U_{n,d}^\epsilon(\theta+\zeta)-\partial_\alpha\partial_\beta U_{n,d}^\epsilon(\theta)|1_{\Omega_l}] \leq C_l \eta(a).\] Finally,\ we obtain $N_{n,d}(a_{n,d})1_{\Omega_l} \xrightarrow[]{p}0$ as $n,d \rightarrow \infty$ for any sequence $a_{n,d} \downarrow 0$ and since $P(\Omega_l) \rightarrow 1$,\ the claim follows.
\end{proof}
Let us define the approximation $L_{n,d}^{\epsilon\ \prime}$ of $L_{n,d}^\epsilon$ as 
\[L_{n,d}^{\epsilon\ \prime}  \coloneqq \frac{1}{\sqrt{n}}\sum_{i=1}^n\nabla_\xi F_\epsilon(\theta,t_{i-1},\Delta J_i^d),\ \Delta J_i^d \coloneqq \frac{\sigma^d_{t_{i-1}}(\theta)}{\sqrt{\Delta}}(W_{t_i}-W_{t_{i-1}}).\]
Then,\ the following holds.
\begin{lem}
\label{OiegndogihmoiNAOSIDmsr}
    For any $\epsilon$,\ $L_{n,d}^\epsilon-L_{n,d}^{\epsilon\ \prime}$ converges to $0$ in probability as $n,d \rightarrow 0$.
\end{lem}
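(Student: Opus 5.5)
We prove Lemma~\ref{OiegndogihmoiNAOSIDmsr} by making the difference explicit and then applying Lemma~$9$ of \cite{genon1993estimation}, as in Lemma~\ref{EOirgrthoimOASINOSdigdghNOSINR}.

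\textbf{Reduction to a weighted sum.} In $F_\epsilon$, the variable $x$ enters only through $|x|^2$, and the factor multiplying it is $\mathscr{F}_{t_{i-1}}$-measurable. Differentiating in $\xi$ gives
\[
L_{n,d}^\epsilon-L_{n,d}^{\epsilon\ \prime}=-\frac{1}{\sqrt{n}}\sum_{i=1}^n g_i\bigl(|\Delta I_i^d|^2-|\Delta J_i^d|^2\bigr),\qquad g_i\coloneqq\frac{\nabla_\xi|\hat{\sigma}_{t_{i-1}}|^2}{(|\hat{\sigma}_{t_{i-1}}|^2+\epsilon d)^2}.
\]
By Lemma~\ref{OWImeorignOIAMOIMSdfkLQ}, $|g_i|\le M/d$. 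I localize with the stopping times $R_l$ from the proof of Lemma~\ref{EORIngotihmoSIFOIGNoaiNDF}: multiply each summand by $1_{\{t_{i-1}<R_l\}}$, which gives $|g_i|\le C_l/d$. Since $P(\Omega_l)\to1$, it suffices to prove convergence for the localized sums.

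\textbf{Decomposition of the increments.} For $t_i<T_j$, Proposition~\ref{SOfdignrsothimoISNogisdnfgoimOSINFgoidsfg} gives $\Delta I_i^{d,j}=\Delta J_i^{d,j}+r_i^j$ with
\[
r_i^j\coloneqq\Delta^{-1/2}\Bigl(-\int_{t_{i-1}}^{t_i}V_s\,ds+\int_{t_{i-1}}^{t_i}\bigl(\sigma_s^{T_j}-\sigma_{t_{i-1}}^{T_j}\bigr)dW_s\Bigr).
\]
Then $|\Delta I_i^d|^2-|\Delta J_i^d|^2=\sum_j\bigl(2\Delta J_i^{d,j}r_i^j+(r_i^j)^2\bigr)$. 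Indices with $T_j\le t_{i-1}$ contribute nothing. For each $j$ there is at most one $i$ with $t_{i-1}<T_j\le t_i$; the corresponding term is $O_P(1)$ by Lemma~\ref{OWImeorignOIAMOIMSdfkLQ}. Because of the weight $n^{-1/2}\cdot d^{-1}$, these exceptional terms contribute $O_P(n^{-1/2})$ in total.

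\textbf{Conditional moments.} Set $\xi_i\coloneqq n^{-1/2}g_i\sum_j(\cdots)$ and write $E_i=E[\cdot\mid\mathscr{F}_{t_{i-1}}]$.

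First, the BDG inequality and Lemma~\ref{EOrignrotihomoISADOISnrgoin} give $E[(r_i^j)^2]\le C\Delta$. The quadratic terms therefore contribute at most $n^{-1/2}\cdot n\cdot C\Delta=O(n^{-1/2})$ in $L^1$.

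The cross term is the main obstacle, because a naive Cauchy--Schwarz bound only gives $E|\Delta J r|=O(\Delta^{1/2})$, which is too weak after the factor $n^{1/2}$. Instead I compute its conditional mean, using that $\Delta J_i^{d,j}=\sigma_{t_{i-1}}^{T_j}\Delta W_i/\sqrt{\Delta}$.

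For the martingale part, Itô's isometry gives
\[
E_i\Bigl[\Delta W_i\int_{t_{i-1}}^{t_i}(\sigma_s^{T_j}-\sigma_{t_{i-1}}^{T_j})\,dW_s\Bigr]=E_i\int_{t_{i-1}}^{t_i}(\sigma_s^{T_j}-\sigma_{t_{i-1}}^{T_j})\,ds .
\]
By the semimartingale decomposition of Lemma~\ref{EOrignrotihomoISADOISnrgoin}, the $dW$ part has zero conditional mean, so this equals $E_i\int_{t_{i-1}}^{t_i}\int_{t_{i-1}}^s b_u^{T_j}\,du\,ds=O(\Delta^2)$ after localization. Dividing by $\Delta$ gives $O(\Delta)$.

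For the drift part, split $\int V_s\,ds=V_{t_{i-1}}\Delta+\int(V_s-V_{t_{i-1}})\,ds$. The first piece has zero conditional correlation with $\Delta W_i$. For the second, I use $E|V_s-V_t|^2\le C|s-t|$; this follows from \eqref{IWurgnsitugbiUQNIUBAiunsifgubW} and the $C^1$ regularity of $k$, by the same argument as Lemma~\ref{EOrignrotihomoISADOISnrgoin} with $x\to t$. It yields an $L^1$ bound $\Delta^{-1}\cdot\Delta^{1/2}\cdot\Delta^{3/2}=O(\Delta)$.

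Averaging over $j$ with weight $1/d$ and summing over $i$, we get
\[
E\Bigl|\sum_i E_i[\xi_i]\Bigr|\le C\,n^{-1/2}\,n\,\Delta\to0 .
\]
For the second moments, Cauchy--Schwarz in $j$ together with moment bounds gives $E_i[\xi_i^2]\le C_l\,n^{-1}\cdot d^{-2}\cdot d\sum_j E_i\bigl[(2\Delta J r+r^2)^2\bigr]\le C_l\,n^{-1}\Delta$ in $L^1$. Hence $\sum_iE_i[\xi_i^2]\to0$.

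\textbf{Conclusion.} Lemma~$9$ of \cite{genon1993estimation} now gives $\sum_i\xi_i\xrightarrow{p}0$ on each $\Omega_l$. Since $P(\Omega_l)\to1$, this proves Lemma~\ref{OiegndogihmoiNAOSIDmsr}.
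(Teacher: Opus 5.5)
Your route is the paper's: localization by $R_l$; the exact quadratic identity for $L_{n,d}^\epsilon-L_{n,d}^{\epsilon\ \prime}$, which is the same identity as the paper's Taylor formula \eqref{EOrtigndoghimoIASsoigfndsfogim}; an $L^1$ bound on the quadratic remainder; and the conditional mean of the cross term split into a drift part and a martingale part. You handle the martingale part via It\^o's isometry and Lemma~\ref{EOrignrotihomoISADOISnrgoin}, which reproduces the paper's bound $E[A_{i,j}']\le C\Delta^2$. You finish with the conditional second moments and Lemma~9 of \cite{genon1993estimation}. All of this is sound, and your second-moment bound $C_l\Delta$ is even slightly sharper than the paper's $C_l\Delta^{1/2}$. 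One step, however, does not hold under the paper's hypotheses.

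The gap is in the drift part: the estimate $E|V_s-V_t|^2\le C|s-t|$ is not available. The exponential factor in \eqref{IWurgnsitugbiUQNIUBAiunsifgubW} has mean one, so $E[V_t]=V_0^t$ and hence $E|V_t-V_s|^2\ge|V_0^t-V_0^s|^2$. The spot variance therefore inherits the time regularity of the initial curve $u\mapsto V_0^u$, which is only assumed continuous. The $C^1$ regularity of $k$ controls only the exponential factor. Nor does the semimartingale argument of Lemma~\ref{EOrignrotihomoISADOISnrgoin} carry over to the diagonal $V_t=V_t^t$: it works for $\sigma_t^x$ only because $V_t^u$ is integrated over $u$.

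The fix is what the paper does. You only need $o(\Delta^{3/2})$ per term, and continuity alone gives this. Set $\tau(\Delta)\coloneqq E[\sup_{|t-s|\le\Delta}|V_t-V_s|^2]^{1/2}$. Then $E\bigl|E_i\bigl[\Delta W_i\int_{t_{i-1}}^{t_i}(V_s-V_{t_{i-1}})\,ds\bigr]\bigr|\le\Delta^{3/2}\tau(\Delta)$. Moreover $\tau(\Delta)\to0$ by the almost sure uniform continuity of $V$ and dominated convergence, with the supremum dominated by $4M^2$ from Lemma~\ref{OWImeorignOIAMOIMSdfkLQ}. With your weights, the drift part then contributes $C_l\tau(\Delta)$ rather than $O(n^{-1/2})$. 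Your final bound becomes $C_l(\Delta^{1/2}+\tau(\Delta))\to0$, which suffices for the lemma. A rate would require extra regularity, e.g.\ $1/2$-H\"older continuity of $V_0^\cdot$.
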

\begin{proof}
   We define the stopping times $R_l$ and the events $\Omega_l$ in the same way as in the proof of Lemma~\ref{EORIngotihmoSIFOIGNoaiNDF}.\ Fix $\epsilon,\alpha$ arbitrarily and set
   \[\chi_{i,d}^l \coloneqq \frac{1}{\sqrt{n}} [\partial_\alpha F_\epsilon(\theta,t_{i-1},\Delta I_i^d )-\partial_\alpha F_\epsilon(\theta,t_{i-1},\Delta J_i^d)]1_{\{t_{i-1}<R_l\}}.\]
   Hereafter,\ we simply denote $E[\cdot \mid\mathscr{F}_{t_{i-1}}]$ by $E_i[\cdot]$.\ 
   
   We first show $\sum_{i=1}^n E_i[\chi_{i,d}^l] \xrightarrow[]{p}0$ as $n,d \rightarrow \infty$.\ Let us define $g_i(x) \coloneqq \partial_\alpha F_\epsilon(\theta,t_{i-1},x)$.
   Then by Taylor's formula,\ we get
 \begin{equation}
 \label{EOrtigndoghimoIASsoigfndsfogim}
      g_i(\Delta I_i^d) -g_i(\Delta J_i^d)=\nabla g_i(\Delta J_i^d)^\mathsf{T}(\Delta I_i^d -\Delta J_i^d) -\frac{2(\partial_\alpha \hat{\sigma}_{t_{i-1}})^\mathsf{T}\hat{\sigma}_{t_{i-1}}}{(|\hat{\sigma}_{t_{i-1}}|^2+\epsilon d)^2}|\Delta I_i^d -\Delta J_i^d|^2.
 \end{equation}
 Here we set $\Delta W_i \coloneq W_{t_i}-W_{t_{i-1}}$.\ In what follows,\ constants used the estimates and those depending on $l$ will be denoted by the same symbols $C$ and $C_l$.\ Then,\ we have
 \[
    \left|\sum_{i=1}^n E_i[\chi_{i,d}^l]\right|\leq \frac{1}{\sqrt{n}} \cdot \frac{C_l}{d} \sum_{i=1}^n (|E_i[(\Delta J_i^d)^\mathsf{T}(\Delta I_i^d -\Delta J_i^d)]|1_{\{t_{i-1}<R_l\}}+E_i[|\Delta I_i^d -\Delta J_i^d|^2]).
 \]
By Lemmas~\ref{OWImeorignOIAMOIMSdfkLQ} and \ref{EOrignrotihomoISADOISnrgoin}, for any $p \geq 2$, we obtain
\begin{equation}
\label{OIEnrtgoidfghmoIASBDEOFISr}
    \sum_{i=1}^nE[|\Delta I_i^d -\Delta J_i^d|^p] \leq Cd^{p/2} (1+\Delta^{p/2-1}).
\end{equation}
Furthermore, we obtain 
     \[
         \sum_{i=1}^n |E_i[ (\Delta J_i^d)^\mathsf{T}(\Delta I_i^d -\Delta J_i^d)]|1_{\{t_{i-1}<R_l\}}\leq \frac{C_l}{\Delta}\sum_{j=1}^d \sum_{i=1}^n[A_{i} + A_{i,j}'] ,
     \]
     where
     \[A_{i} \coloneqq \left|E_i\left[\Delta W_i \int_{t_{i-1}}^{t_i} V_sds\right]\right|,\quad A_{i,j}' \coloneqq \left|E_i\left[\Delta W_i\int_{t_{i-1}}^{t_i} (\sigma_{s}^{T_j}-\sigma_{t_{i-1}}^{T_j})dW_s\right]\right|.\]
     We observe $E[A_{i}] \leq \Delta^{3/2} \tau(\Delta)$, where $\tau(\Delta) \coloneqq E[\sup_{|t-s|\leq \Delta} |V_t-V_s|^2]^{1/2}$. By the uniform continuity of $V$ and the dominated convergence theorem,\ we obtain that $\tau(\Delta) \rightarrow 0$ as $\Delta \rightarrow 0$.\ From Lemma~\ref{EOrignrotihomoISADOISnrgoin},
     \[\sigma_s^{T_j} - \sigma_{t_{i-1}}^{T_j} = u_s + v_s + \phi_{t_{i-1}}^{T_j}(W_s - W_{t_{i-1}}),\ u_s \coloneqq \int_{t_{i-1}}^s b_r^{T_j} dr,\ v_s \coloneqq \int_{t_{i-1}}^s (\phi_r^{T_j}-\phi_{t_{i-1}}^{T_j})dW_r. \]
     It follows that $E[A_{i,j}']\leq C\Delta^2$.
     Therefore,\ we have
     \[E\left[\sum_{i=1}^n |E_i[(\Delta J_i^d)^\mathsf{T}(\Delta I_i^d -\Delta J_i^d)]|1_{\{t_{i-1}<R_l\}}\right] \leq C_ld(1+\sqrt{n}\tau(\Delta)).\]
     Thus,
     \[
         E\left[\left|\sum_{i=1}^nE_i[\chi_{i,d}^l]\right|\right] \leq \frac{1}{\sqrt{n}} \cdot \frac{C_l}{d}(d(1+\sqrt{n}\tau(\Delta)) +d) \leq C_l(\Delta^{1/2} +\tau(\Delta)) \rightarrow 0,\ n,d \rightarrow \infty.
      \]

     We turn to the proof of $\sum_{i=1}^n E_i[(\chi_{i,d}^l)^2] \xrightarrow[]{p}0$,\ as $n,d \rightarrow \infty$.\ From \eqref{EOrtigndoghimoIASsoigfndsfogim},\ we have
     \[(\chi_{i,d}^l)^2 \leq \frac{C_l}{d}(\Delta W_i)^2 |\Delta I_i^d-\Delta J_i^d|^2 +\frac{C_l }{nd^2} |\Delta I_i^d -\Delta J_i^d|^4 .\]
     Therefore, by \eqref{OIEnrtgoidfghmoIASBDEOFISr}, $E\left[\sum_{i=1}^n E_i[(\chi_{i,d}^l)^2]\right] \leq C_l\Delta^{1/2} \rightarrow 0,\ n,d \rightarrow \infty$.

     From the above,\ $\sum_{i=1}^n \chi_{i,d}^l \xrightarrow[]{p}0$,\ as $n,d \rightarrow \infty$.\ Finally,\ since $P(\Omega_l) \rightarrow 1$,\ the assertion follows.
\end{proof}

Let $(\mathscr{G}_t)$ denote the filtration generated by the standard Brownian motion $W$.\ We shall show that $L_{n,d}^\epsilon$ $\mathscr{G}_1$-stably converges in law to $L^\epsilon$,\ where $L^\epsilon$ is defined on an extension of the space $\Omega$ and is centered Gaussian,\ conditionally on $\mathscr{G}_1$,\ with covariance matrix $D^\epsilon$~\eqref{ErogirethoimOIWNOEIrgndofighoIASN},\ where the $(\alpha,\beta)$-component of $D^\epsilon$ is restated as
\[ \begin{split}
    (D^\epsilon)^{\alpha,\beta} &= \int_0^1z_t^\epsilon \left(\int_0^1 \partial_\alpha \sigma_t^x(\theta)\sigma_t^x(\theta)\mu(dx)\right)\left(\int_0^1 \partial_\beta \sigma_t^x(\theta)\sigma_t^x(\theta)\mu(dx)\right) dt,\\
    z_t^\epsilon&= \frac{8\left(\int_0^1 \sigma_t^x(\theta)^2\mu(dx)\right)^2}{\left(\int_0^1 \sigma_t^x(\theta)^2\mu(dx)+\epsilon\right)^4}.
\end{split}\]
\begin{lem}
Assume additionally that \hypref{OEIrgnrsotihymoIANOisrfg} holds.
    For all $\epsilon$,\ $L_{n,d}^\epsilon$ $\mathscr{G}_1$-stably converges in law to $L^\epsilon$.
\end{lem}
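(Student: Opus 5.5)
By Lemma~\ref{OiegndogihmoiNAOSIDmsr}, it suffices to prove that $L_{n,d}^{\epsilon\,\prime}$ converges $\mathscr{G}_1$-stably in law to $L^\epsilon$. I first compute the summands explicitly. Write $Z_i\coloneqq \Delta W_i/\sqrt{\Delta}$, so that $|\Delta J_i^d|^2=|\sigma_{t_{i-1}}^d|^2 Z_i^2$. Differentiating $F_\epsilon$ gives
\[
\partial_\alpha F_\epsilon(\theta,t,x)=\frac{\partial_\alpha|\hat{\sigma}_t|^2\,\bigl(|\hat{\sigma}_t|^2-|x|^2\bigr)}{(|\hat{\sigma}_t|^2+\epsilon d)^2}.
\]
Substituting $x=\Delta J_i^d$, I split $\sqrt{n}^{-1}\partial_\alpha F_\epsilon(\theta,t_{i-1},\Delta J_i^d)=\zeta_i^\alpha+\rho_i^\alpha$, where
\[
\zeta_i^\alpha\coloneqq\frac{1}{\sqrt{n}}\frac{\partial_\alpha|\hat{\sigma}_{t_{i-1}}|^2\,|\sigma_{t_{i-1}}^d|^2(1-Z_i^2)}{(|\hat{\sigma}_{t_{i-1}}|^2+\epsilon d)^2},\qquad
\rho_i^\alpha\coloneqq\frac{1}{\sqrt{n}}\frac{\partial_\alpha|\hat{\sigma}_{t_{i-1}}|^2\,(|\hat{\sigma}_{t_{i-1}}|^2-|\sigma_{t_{i-1}}^d|^2)}{(|\hat{\sigma}_{t_{i-1}}|^2+\epsilon d)^2}.
\]

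\textbf{Step 1: the bias term vanishes.} This is the only place where \hypref{OEIrgnrsotihymoIANOisrfg} is used. By Lemma~\ref{OWImeorignOIAMOIMSdfkLQ}, $|\partial_\alpha|\hat{\sigma}_t|^2|\le Md$ and the denominator is at least $\epsilon^2d^2$. The discretization estimate from the proof of Theorem~\ref{EORigeothimoISNAOFISNrogsdifghaosfigneso} gives $\bigl||\hat{\sigma}_t|^2-|\sigma_t^d|^2\bigr|/d\le M\max_k|T_k-T_{k-1}|$. Hence
\[
\Bigl|\sum_{i=1}^n\rho_i^\alpha\Bigr|\le M\sqrt{n}\,\max_k|T_k-T_{k-1}|,
\]
and the right-hand side is $o_p(1)$ by \hypref{OEIrgnrsotihymoIANOisrfg}.

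\textbf{Step 2: the martingale term.} Each $\zeta_i$ is $\mathscr{G}_{t_i}$-measurable, since $I$ and hence $\hat\sigma$ and $\sigma^d$ are functionals of $W$. Its coefficient is $\mathscr{G}_{t_{i-1}}$-measurable and $Z_i$ is independent of $\mathscr{G}_{t_{i-1}}$. I will apply Jacod's stable martingale CLT, as in \cite{genon1993estimation}, to the triangular array $\zeta_i$, and verify the following conditions.

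\begin{enumerate}
\item $E_i[\zeta_i]=0$, because $E[1-Z_i^2]=0$.
\item Since $E[(1-Z^2)^2]=2$,
\[
\sum_i E_i[\zeta_i\zeta_i^\mathsf{T}]=\frac1n\sum_{i=1}^n \frac{8\,(|\sigma^d_{t_{i-1}}|^2/d)^2}{(|\hat{\sigma}_{t_{i-1}}|^2/d+\epsilon)^4}\,\hat e_{t_{i-1},d}\hat e_{t_{i-1},d}^\mathsf{T}.
\]
I will show this converges in probability to $D^\epsilon$. I reuse the argument of Lemma~\ref{EORIngotihmoSIFOIGNoaiNDF}: localize with $R_l$ and $\Omega_l$; replace the Riemann sum by a $dt$-integral using the $|t-s|^{1/2}$ $L^1$-continuity from Lemmas~\ref{EOEITnhgoedtihmoIAOIDGHOIDFNhoiAMOIWS} and~\ref{EOrignrotihomoISADOISnrgoin}; then pass to the limit $d\to\infty$ via \hypref{EORigndgfhoimOAINWOirngdofgimOASINSOfiogm}, the $\max_k|T_k-T_{k-1}|$ discretization bounds, and dominated convergence.
\item The Lyapunov condition $\sum_iE_i|\zeta_i|^4\le CM/n\to0$ holds, since the coefficients are bounded by $M/\sqrt n$.
\item Orthogonality to $W$ holds: $E_i[\zeta_i\Delta W_i]=0$, because $E[(1-Z^2)Z]=0$. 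Since $(\mathscr{G}_t)$ is the Brownian filtration, every bounded $\mathscr{G}$-martingale is a stochastic integral with respect to $W$. Therefore $E_i[\zeta_i\Delta N_i]=0$ for every bounded $\mathscr{G}$-martingale $N$ orthogonal to $W$, and the condition $\sum_iE_i[\zeta_i\Delta W_i]\to0$ holds trivially.
\end{enumerate}

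These conditions give $\mathscr{G}_1$-stable convergence of $\sum_i\zeta_i$ to a conditionally centered Gaussian with covariance $D^\epsilon$. By Slutsky, combining Step 1 and Lemma~\ref{OiegndogihmoiNAOSIDmsr}, the same limit holds for $L_{n,d}^\epsilon$.

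\textbf{Main obstacle.} I expect the hardest step to be the convergence of the conditional covariance in condition (ii), because the joint limit $n,d\to\infty$ must be handled uniformly in $t$. If the $L^1$-Hölder bounds are insufficient there, I would fall back on the localized uniform bounds on $\Omega_l$ together with Step 1's bias estimate.
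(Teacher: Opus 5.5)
Your proposal is correct and follows essentially the paper's route. The paper applies Jacod's conditions directly to $\psi_{i,d}$ (your $\zeta_i+\rho_i$) and handles your bias term $\rho_i$ in its conditional-mean condition with the same bound $M\sqrt{n}\max_k|T_k-T_{k-1}|$; the conditional-covariance, fourth-moment and orthogonality checks match yours. One point to state explicitly: Jacod's theorem needs the conditional covariance convergence for every $t\in[0,1]$ (sums up to $[nt]$ converging to $D^\epsilon(t)$), not only at $t=1$, though your Riemann-sum and dominated-convergence argument gives this verbatim.
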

\begin{proof}
    First,\ by Lemma~\ref{OiegndogihmoiNAOSIDmsr},\ it suffices to show the stable convergence of $L_{n,d}^{\epsilon\ \prime}$ instead of $L_{n,d}^\epsilon$.\ Fix $\epsilon$ arbitrarily and define $\mathscr{G}_{t_{i}}$-measurable fourth moment integrable $\mathbb{R}^q$-valued random vectors $\psi_{i,d}$,\ where the $\alpha$-component of $\psi_{i,d}$ is\\ $(1/\sqrt{n}) \partial_\alpha F_\epsilon (\theta,t_{i-1},\Delta J_i^d)$.\ We represent $E[\cdot \mid \mathscr{G}_{t_{i-1}}]$ in terms of $E_i[\cdot]$ as follows.\ Then,\ according to \cite{Jacod1997},\ it suffices to show the following in order to prove that $L_{n,d}^{\epsilon\ \prime}$ $\mathscr{G}_1$-stably converges to $L^\epsilon$.
    \begin{enumerate}
        \item $\sup_t \left|\sum_{i=1}^{[nt]} E_i[\psi_{i,d}]\right| \xrightarrow[]{p}0,\ n,d \rightarrow \infty$,
        \item For all $t,\alpha,\beta$,
        $\sum_{i=1}^{[nt]} (E_i[\psi_{i,d}^\alpha \psi_{i,d}^\beta]-E_i[\psi_{i,d}^\alpha]E_i[\psi_{i,d}^\beta]) \xrightarrow[]{p} (D^\epsilon)^{\alpha,\beta}(t),\ n,d \rightarrow \infty$,
        where 
        \[ \begin{split}
    (D^\epsilon)^{\alpha,\beta}(t) &\coloneqq \int_0^t z_s^\epsilon\left(\int_0^1 \partial_\alpha \sigma_s^x\sigma_s^x\mu(dx)\right)\left(\int_0^1 \partial_\beta \sigma_s^x\sigma_s^x\mu(dx)\right)ds,\\
    z_t^\epsilon &\coloneqq \frac{8\left(\int_0^1 (\sigma_t^x)^2\mu(dx)\right)^2}{\left(\int_0^1 (\sigma_t^x)^2\mu(dx)+\epsilon\right)^4},
\end{split}\]
\item For every $t$,\ $\sum_{i=1}^{[nt]} E_i[ \psi_{i,d} (W_{t_i}-W_{t_{i-1}})]\xrightarrow[]{p}0,\ n,d \rightarrow \infty$,
\item $\sum_{i=1}^n E_i[|\psi_{i,d}|^4] \xrightarrow[]{p}0,\ n,d \rightarrow \infty$,
\item For any $t$,\ $\sum_{i=1}^{[nt]}E_i[\psi_{i,d}(N_{t_i}-N_{t_{i-1}})] \xrightarrow[]{p}0,\ n,d \rightarrow \infty$,\ where $N$ is bounded $(\mathscr{G}_t)$-martingale orthogonal to $W$.
    \end{enumerate}
    We begin with (i). As confirmed in the proof of Theorem~\ref{EORigeothimoISNAOFISNrogsdifghaosfigneso},\ there exists a random variable $M<\infty$ such that for all $\alpha$,
    \[
        |E_i[\psi_{i,d}^\alpha]| \leq \frac{1}{\sqrt{n}}\cdot \frac{2|(\partial_\alpha \hat{\sigma}_{t_{i-1}})^\mathsf{T} \hat{\sigma}_{t_{i-1}}|}{(|\hat{\sigma}_{t_{i-1}}|^2+\epsilon d)^2}||\hat{\sigma}_{t_{i-1}}|^2-|\sigma_{t_{i-1}}^d|^2|\leq \frac{M }{\sqrt{n}} \max_{k}|T_k-T_{k-1}|,
    \]
    and hence $\sup_t \left|\sum_{i=1}^{[nt]} E_i[\psi_{i,d}]\right| \leq M\sqrt{n} \max_k|T_k-T_{k-1}|$.\ Therefore,\ by \hypref{OEIrgnrsotihymoIANOisrfg},\ this converges to zero as $n,d \rightarrow \infty$.

    To show (ii),\ take arbitrary $\alpha$ and $\beta$.\ We first observe that
    \[
        E_i[\psi_{i,d}^\alpha \psi_{i,d}^\beta]-E_i[\psi_{i,d}^\alpha]E_i[\psi_{i,d}^\beta]
        =\frac{8}{n} \cdot A_{\alpha,t_{i-1}}A_{\beta,t_{i-1}}|\sigma_{t_{i-1}}^d|^4,\quad 
        A_{\alpha,t} \coloneqq \frac{(\partial_\alpha \hat{\sigma}_{t})^\mathsf{T} \hat{\sigma}_{t}}{(|\hat{\sigma}_{t}|^2+\epsilon d)^2}.
    \]
    Here we define $\chi_{d}(t) \coloneqq 8A_{\alpha,t}A_{\beta,t}|\sigma_t^d|^4$.\ Fix $t$ arbitrarily.\ First,\ we show\\ $|\sum_{i=1}^{[nt]}\chi_{d}(t_{i-1})/n -\int_0^t\chi_{d}(s)ds| \xrightarrow[]{p}0$ as $n,d \rightarrow \infty$.\ To begin with,\ we observe that
    \[
        \left|\sum_{i=1}^{[nt]}\chi_{d}(t_{i-1})/n -\int_0^t\chi_{d}(s)ds\right|\leq \sum_{i=1}^{[nt]}\int_{t_{i-1}}^{t_i}|\chi_{d}(t_{i-1})-\chi_{d}(s)|ds+\int_{[nt]/n}^t|\chi_{d}(s)|ds.
    \]
    In what follows,\ we use the same symbol $M$ for random variables employed in the estimates,\ all of which are integrable to the required powers.\ Since $|\chi_{d}| \leq M$,\ the second term on the right-hand side of the above expression converges to zero as $n,d \rightarrow \infty$.\ Next,\ we have
    \[
        |\chi_{d}(t)-\chi_{d}(s)|\leq M \left[d |A_{\alpha,t}-A_{\alpha,s}| +d |A_{\beta,t}-A_{\beta,s}|+\frac{1}{\sqrt{d}}|\sigma_t^d-\sigma_s^d|\right].
    \]
    Moreover,\ we have
    \[
        |A_{\alpha,t}-A_{\alpha,s}|
        \leq \frac{M}{d^{3/2}}(|\hat{\sigma}_{t}-\hat{\sigma}_{s}| +|\partial_\alpha \hat{\sigma}_{t}-\partial_\alpha \hat{\sigma}_{s}|).
    \]
    From the above,\ it follows from Lemma~\ref{EOEITnhgoedtihmoIAOIDGHOIDFNhoiAMOIWS} that there exists a constant $C$ such that $E[|\chi_{d}(t)-\chi_{d}(s)|] \leq C|t-s|^{1/2}$,\ and hence $|\sum_{i=1}^{[nt]}\chi_{d}(t_{i-1})/n -\int_0^t\chi_{d}(s)ds| \xrightarrow[]{p}0$ as $n,d \rightarrow \infty$.\ As established in the proofs of Theorem~\ref{EORigeothimoISNAOFISNrogsdifghaosfigneso} and Lemma~\ref{EORIngotihmoSIFOIGNoaiNDF},\ we have $\frac{1}{d} |\sigma_t^d|^2 \rightarrow \int_0^1 (\sigma_t^x)^2\mu(dx)$,\ $\frac{1}{d} |\hat{\sigma}_t|^2 \rightarrow \int_0^1 (\sigma_t^x)^2\mu(dx)$ and $\frac{1}{d} (\partial_\alpha \hat{\sigma}_t)^\mathsf{T}\hat{\sigma_t} \rightarrow  \int_0^1 \partial_\alpha \sigma_t^x \sigma_t^x \mu(dx)$,\ and hence,\ by the dominated convergence theorem,\ $\int_0^t\chi_{d}(s)ds \xrightarrow[]{p} (D^\epsilon)^{\alpha,\beta}(t)$,\ as $d \rightarrow \infty$.\ Thus,\ (ii) is established.

    (iii) is clear from the definition.\ We now prove (iv).\ Since
    \[E_i[|\psi_{i,d}|^4] \leq \frac{C}{n^2} \sum_\alpha \sum_\beta A_{\alpha,t_{i-1}}^2 A_{\beta,t_{i-1}}^2 (|\hat{\sigma}_{t_{i-1}}|^8+|\sigma_{t_{i-1}}^d|^8)\]
    for another constant $C$ and $A_{\alpha,t_{i-1}}^2 A_{\beta,t_{i-1}}^2 (|\hat{\sigma}_{t_{i-1}}|^8+|\sigma_{t_{i-1}}^d|^8) \leq M$,\ we have $\sum_{i=1}^n E_i[|\psi_{i,d}|^4] \leq Mq^2/n \rightarrow 0$,\ as $n,d \rightarrow \infty$.

    It remains to prove (v).\ Let $N$ be any bounded $(\mathscr{G}_t)$-martingale orthogonal to $W$.\ Since $(\mathscr{G}_t)$ is generated by Brownian motion $W$,\ it follows from the It\^{o} representation theorem that $N$ is constant.\ Thus (v) is satisfied.
\end{proof}

We now discuss the invertiblity of $B^\epsilon$.\ Suppose that
$x^\mathsf{T} B^\epsilon x = \int_0^1 y_t^\epsilon (x^\mathsf{T}e_t)^2 dt =0$.
Then it follows that
\begin{equation}
\label{OEInrgodimoiAOIDndsorigdseotriNA}
    x^\mathsf{T} e_t =0,\ t \text{-a.e.}
\end{equation}
Fix arbitrary $t<1$.\ Arguing in the same way as in the proof of Corollary~\ref{EORginseoihmOIANOisrgmoiNAOiedrg} (and using the same notation as there), we obtain
\[\begin{split}x^\mathsf{T} e_t &= \int_0^{1-t}\sum_\alpha x_\alpha \partial_\alpha k(\theta,r) \int_0^{1-t}k(\theta,r')K_t(r,r')dr'dr\\
&=\int_0^{1-t}\sum_\alpha x_\alpha\partial_\alpha  k(\theta,r) q'(t,r)dr,\\
q'(t,r) 
&\coloneqq \phi(t,r)\left(G(t,r)\int_0^r k(r')\phi(t,r')dr' + \int_r^{1-t} k(r') \phi(t,r') G(t,r')dr'\right).
\end{split}\]
Furthermore,\ we have $q'(t,\cdot) >0$ on $(0,1-t)$.

Now suppose that $x \neq 0$.\ Then, by \hypref{ErogisdotihnoiSMDOGFIsnrofgidsnfoginOIW},\ there exists $\delta >0$ such that \\
$\sum_\alpha x_\alpha \partial_\alpha k(\theta,\cdot) >0\ (\text{or} <0)$\ on\ $(0,\delta)$.\ Taking any $t \in (1-\delta,1)$,\ we obtain
\[\int_0^{1-t}\sum_\alpha x_\alpha \partial_\alpha k(\theta,r) q'(t,r)dr >0\ (\text{or} <0),\]
which contradicts \eqref{OEInrgodimoiAOIDndsorigdseotriNA}.\ Therefore, we conclude that $x = 0$.\ Similarly,\ $D^\epsilon$ is also invertible.

\begin{proof}[Proof of Theorem~\textnormal{\ref{OWIRgnosdifgnoIAMOSIdf}}]
    -- Let $\Omega_{n,d}^{\epsilon\ \prime} \coloneqq \{\omega \in \Omega_{n,d}^\epsilon \mid B_{n,d}^\epsilon(\omega)\ \text{is invertible}\}$,\ and $B_{n,d}^{\epsilon\ \prime} =B_{n,d}^\epsilon$ on $\Omega_{n,d}^{\epsilon\ \prime}$,\ and $B_{n,d}^{\epsilon\ \prime}=I_q$ (the $q \times q$ identity matrix) on $(\Omega_{n,d}^{\epsilon\ \prime})^c$.\ Then \eqref{EOrigneothinoIMASDOfisdnroginmoINASD} yields $P(\Omega_{n,d}^{\epsilon\ \prime}) \rightarrow 1$ and $(B_{n,d}^{\epsilon\ \prime})^{\ -1} \xrightarrow[]{p}(B^\epsilon)^{-1}$.\ Since $\mathscr{F}^I \subset \mathscr{G}_1$,\ $L_{n,d}^\epsilon$ $\mathscr{F}^I$-stably converges in law to $L^\epsilon$.\ Furthermore,\ from integration by parts we obtain
    $\sigma_t^x =I_t^x k(\theta,x-t)-\int_t^xI_t^u \partial_tk(\theta,u-t)du$,
    hence $B^\epsilon$ and $D^\epsilon$ are $\mathscr{F}^I$-measurable and $-(B_{n,d}^{\epsilon\ \prime})^{-1}L_{n,d}^\epsilon$ $\mathscr{F}^I$-stably converges in law to the claim $S^\epsilon=-(B^\epsilon)^{-1}L^\epsilon$.\ Finally,\ in view of $\sqrt{n}(\hat{\theta}_{n,d}-\theta) =-(B_{n,d}^{\epsilon\ \prime})^{\ -1}L_{n,d}^\epsilon$ on $\Omega_{n,d}^{\epsilon\ \prime}$,\ the desired result follows.
\end{proof}

\begin{proof}[Proof of Corollary~\textnormal{\ref{EEGRTOAINoinoidsnSOAI}}]
--Take an arbitrary $\epsilon >0$.\ Since Corollary~\ref{EORginseoihmOIANOisrgmoiNAOiedrg} holds, it suffices to show that $\hat{B}_{n,d}^\epsilon(\xi)\rightarrow B^\epsilon(\xi)$ and $\hat{D}_{n,d}^\epsilon(\xi)\rightarrow D^\epsilon(\xi)$ $\xi$-uniformly in probability as $n,d \rightarrow \infty$.\ To this end,\ define $B_d^\epsilon(\xi)$ and $D_d^\epsilon(\xi)$ by
\[\begin{split}
    B_d^\epsilon(\xi) \coloneqq \int_0^1 \hat{y}_{t,d}^\epsilon(\xi) \hat{e}_{t,d}(\xi)\hat{e}_{t,d}(\xi)^\mathsf{T}dt, D_d^\epsilon(\xi) \coloneqq \int_0^1 \hat{z}_{t,d}^\epsilon(\xi) \hat{e}_{t,d}(\xi)\hat{e}_{t,d}(\xi)^\mathsf{T}dt
\end{split}\]
and proceed step by step. 

First,\ we show that $\sup_{\xi}|\hat{B}_{n,d}^\epsilon(\xi) -B_d^\epsilon(\xi)|\xrightarrow[n,d \rightarrow \infty]{p} 0$. To prove this, it suffices to show that for each $\alpha,\beta =1,\ldots,q$,\ $\hat{B}_{n,d}^\epsilon(\xi)^{\alpha,\beta}-B_d^\epsilon(\xi)^{\alpha,\beta}$ converges to $0$ in probability at each $\xi$,\ and that $\sup_{\xi,n,d}|\nabla_\xi \hat{B}_{n,d}^\epsilon(\xi)^{\alpha,\beta}| <\infty$ and $\sup_{\xi,d} |\nabla_\xi B_d^\epsilon(\xi)^{\alpha,\beta} | <\infty$.\ The argument for $\sup_\xi|\hat{D}_{n,d}^\epsilon(\xi)-D_d^\epsilon(\xi)| \xrightarrow[n,d \rightarrow \infty]{p} 0$ is similar. By Lemma~\ref{OWImeorignOIAMOIMSdfkLQ},\ there exists a random variable $M$ such that for each $\xi,t,d$ and $\alpha$,\ we obtain the estimates $|\hat{y}_{t,d}^\epsilon(\xi)| \leq 4/\epsilon^2$,\ $|z_{t,d}^\epsilon(\xi)| \leq M$ and $|e_{t,d}^\epsilon(\xi)^\alpha|  \leq M$.\ Furthermore,\ for each $\gamma$,
$|\partial_\gamma \hat{y}_{t,d}^\epsilon(\xi)| \leq M$,
    $|\partial_\gamma \hat{z}_{t,d}^\epsilon(\xi)| \leq M$, and
    $|\partial_\gamma \hat{e}_{t,d}(\xi)^\alpha|\leq M$.
Therefore, we have 
$\sup_{\xi,n,d}|\nabla_\xi \hat{B}_{n,d}^\epsilon(\xi)^{\alpha,\beta}| <\infty$ and $\sup_{\xi,d}|\nabla_\xi B_d^\epsilon(\xi)^{\alpha,\beta}| <\infty$. Similarly,\ we obtain $\sup_{\xi,n,d}|\nabla_\xi \hat{D}_{n,d}^\epsilon(\xi)^{\alpha,\beta}| <\infty$ and $\sup_{\xi,d}|\nabla_\xi D_d^\epsilon(\xi)^{\alpha,\beta}| <\infty$.
Again by Lemma~\ref{OWImeorignOIAMOIMSdfkLQ},\ there exists a random variable $M$ such that for any $\xi,t,s,d$ and $\alpha$,
\[\begin{split}
    &|\hat{y}_{t,d}^\epsilon(\xi) - \hat{y}_{s,d}^\epsilon(\xi)| \leq \frac{M}{\sqrt{d}}|\hat{\sigma}_t(\xi)-\hat{\sigma}_s(\xi)|,\quad |\hat{z}_{t,d}^\epsilon(\xi) - \hat{z}_{s,d}^\epsilon(\xi)| \leq \frac{M}{\sqrt{d}}|\hat{\sigma}_t(\xi)-\hat{\sigma}_s(\xi)|,\\
    &|\hat{e}_{t,d}(\xi)^\alpha - \hat{e}_{s,d}(\xi)^\alpha| \leq \frac{M}{\sqrt{d}} (|\partial_\alpha\hat{\sigma}_t(\xi)-\partial_\alpha \hat{\sigma}_s(\xi)| + |\hat{\sigma}_t(\xi)-\hat{\sigma}_s(\xi)|).
\end{split} \]
Hence,\ by Lemma~\ref{EOEITnhgoedtihmoIAOIDGHOIDFNhoiAMOIWS},\ there exists a constant $C$,\ independent of $\xi,t,s,d$ and $\alpha$,
\[\begin{split}
    &E|\hat{y}_{t,d}^\epsilon(\xi) - \hat{y}_{s,d}^\epsilon(\xi)| \leq C|t-s|^{1/2},\quad E|\hat{z}_{t,d}^\epsilon(\xi) - \hat{z}_{s,d}^\epsilon(\xi)|\leq C|t-s|^{1/2},\\
    &E|\hat{e}_{t,d}(\xi)^\alpha - \hat{e}_{s,d}(\xi)^\alpha| \leq C|t-s|^{1/2}.
\end{split}\]
From the above, we obtain
\[
    E|\hat{B}_{n,d}^\epsilon(\xi)^{\alpha,\beta} - B_d^\epsilon(\xi)^{\alpha,\beta}| \leq C\Delta^{1/2},\quad E|\hat{D}_{n,d}^\epsilon(\xi)^{\alpha,\beta} - D_d^\epsilon(\xi)^{\alpha,\beta}|\leq C\Delta^{1/2}.
\]
Therefore, $|\hat{B}_{n,d}^\epsilon(\xi)^{\alpha,\beta} - B_d^\epsilon(\xi)^{\alpha,\beta}| \xrightarrow{p} 0$ and $|\hat{D}_{n,d}^\epsilon(\xi)^{\alpha,\beta} - D_d^\epsilon(\xi)^{\alpha,\beta}| \xrightarrow{p} 0$. Consequently,\ the precious arguments yield $\sup_{\xi}|\hat{B}_{n,d}^\epsilon(\xi) -B_d^\epsilon(\xi)|\xrightarrow{p} 0$ and $\sup_\xi|\hat{D}_{n,d}^\epsilon(\xi)-D_d^\epsilon(\xi)| \xrightarrow{p} 0$.

Next, we show that $\sup_{\xi}|B_{d}^\epsilon(\xi) -B^\epsilon(\xi)|\xrightarrow[d \rightarrow \infty]{p} 0$. To prove this, it suffices to show that, for each $\alpha,\beta =1,\ldots,q$,\ $B_{d}^\epsilon(\xi)^{\alpha,\beta}-B^\epsilon(\xi)^{\alpha,\beta}$ converges to $0$ in probability at each $\xi$,\ and that $\sup_{\xi,d}|\nabla_\xi B_{d}^\epsilon(\xi)^{\alpha,\beta}| <\infty$ and $\sup_{\xi} |\nabla_\xi B^\epsilon(\xi)^{\alpha,\beta} | <\infty$.\ However,\ $\sup_{\xi,d}|\nabla_\xi B_{d}^\epsilon(\xi)^{\alpha,\beta}| <\infty$ has already been proved.\ The argument for $\sup_\xi|D_{d}^\epsilon(\xi)-D^\epsilon(\xi)| \xrightarrow[d \rightarrow \infty]{p} 0$ is similar.\ We first note that, as in the previous argument, there exists a random variable $M$ such that for any $\xi,t,\alpha$, and $\gamma$,
$|y_t^\epsilon(\xi)| \leq 4/\epsilon^2$, $|z_t^\epsilon(\xi)| \leq M$, 
$|e_t(\xi)^\alpha| \leq M$,
$|\partial_\gamma y_t^\epsilon(\xi)| \leq M$, $|\partial_\gamma z_t^\epsilon(\xi)| \leq M$, and  $|\partial_\gamma e_t(\xi)^\alpha| \leq M$. This bound implies $\sup_\xi |\nabla_\xi B^\epsilon(\xi)^{\alpha,\beta}|<\infty$ and $\sup_\xi |\nabla_\xi D^\epsilon(\xi)^{\alpha,\beta}|<\infty$ for any $\alpha,\beta$. Furthermore, we obtain
\[\begin{split}
    &|\hat{y}_{t,d}^\epsilon(\xi)-y_t^\epsilon(\xi)| \leq M\left|\int_0^1 \sigma_t^x(\xi)^2 \mu(dx) - \frac{|\hat{\sigma}_t(\xi)|^2}{d}\right|,\\
    &|\hat{z}_{t,d}^\epsilon(\xi)-z_t^\epsilon(\xi)| \leq M\left|\int_0^1 \sigma_t^x(\xi)^2 \mu(dx) - \frac{|\hat{\sigma}_t(\xi)|^2}{d}\right|,\\
    &|\hat{e}_{t,d}^\epsilon(\xi)^\alpha-e_t^\epsilon(\xi)^\alpha| \leq \left|\frac{\partial_\alpha \hat{\sigma}_t(\xi)^\mathsf{T}\hat{\sigma}_t(\xi)}{d}-\int_0^1 \partial_\alpha \sigma_t^x(\xi)\sigma_t^x(\xi)\mu(dx)\right|.
\end{split}\]
As already shown in the proofs of Theorem~\ref{EORigeothimoISNAOFISNrogsdifghaosfigneso} and Lemma~\ref{EORIngotihmoSIFOIGNoaiNDF}, the right-hand side of each inequality converges to $0$ almost surely as $d \to \infty$.
Therefore, by the dominated convergence theorem, we obtain $|B_d^\epsilon(\xi)^{\alpha,\beta}-B^\epsilon(\xi)^{\alpha,\beta}| \xrightarrow[]{p} 0$ and $|D_d^\epsilon(\xi)^{\alpha,\beta}-D^\epsilon(\xi)^{\alpha,\beta}| \xrightarrow[]{p} 0$ for each $\xi$. Consequently, $\sup_\xi|B_d^\epsilon(\xi)-B^\epsilon(\xi)| \xrightarrow[]{p} 0$ and $\sup_\xi|D_d^\epsilon(\xi)-D^\epsilon(\xi)| \xrightarrow[]{p} 0$ holds. Combining these with the previous result completes the proof.
\end{proof}

\FloatBarrier
\section{Simulation and Empirical Analysis}
\label{eoitrgnoIAnoewirgnoISANOSIDFB}
This section has two purposes. 
First, we examine the finite-sample validity of the asymptotic theory by Monte Carlo simulations. 
Second, we conduct an empirical study using SPXW option time-series data. For this empirical analysis, we consider the path-dependent model~\eqref{OWEIrngodfigmoAIBNoisdfbgoiAOINSdf} with the kernel specified as
$k(t)=\eta(t+c)^{H-1/2}$, $\eta>0$, $H\in\mathbb{R}$,
where \(c>0\) is a small regularization constant. We then estimate the parameters of interest \((\eta,H)\).

\FloatBarrier
\subsection{Finite-Sample Validation of the Asymptotic Theory}
We first examine the finite-sample validity of the asymptotic theory by Monte Carlo simulations. 
We consider two parametric kernels,
$k_1(t)\coloneqq\eta\exp(-\xi t)$,
$k_2(t)\coloneqq \eta(t+0.01)^{-\xi}$,
where \(\eta>0\) and \(\xi\in\mathbb{R}\). 
For each kernel, we consider two true parameter values,
$(\eta_0,\xi_0)=(1,-1), (1,1)$.

For each Monte Carlo replication, we first construct the discrete observations
\(\{I_{t_i}^{T_j}\}\) according to the theoretical formula ~\eqref{IWurgnsitugbiUQNIUBAiunsifgubW}--\eqref{OEIRgndosfihgmfdoghinoIWMOSIngodisghoiNSOIN}, with the initial forward variance curve $V_0^\cdot$ is assumed to be constant and equal to $1$. 
We then compute the estimating function \(U_{n,d}^{\epsilon}\) in \eqref{WRgudgtnhnOAIdnfozdifgNSODIndfhg} from these observations with \(\epsilon=10^{-3}\), and obtain the estimator
$(\hat\eta,\hat\xi)$
by minimizing \(U_{n,d}^{\epsilon}(\cdot)\) over \((0,\infty)\times\mathbb{R}\). 
Using the estimated parameter and the same observations \(\{I_{t_i}^{T_j}\}\), we also compute the conditional asymptotic covariance estimator 
\(\hat{\Gamma}_{n,d}^{\epsilon}\) in \eqref{OEirgndsoihoiNOISNdofgisadnfgoi}. 

The asymptotic normality result~\eqref{EWOrigndoghimOAISNFOSIFg} implies that, for sufficiently large \(n\) and \(d\), the studentized statistics
\[\hat Z_{\eta}
\coloneqq \frac{\sqrt{n}(\hat{\eta}-\eta_0)}{\sqrt{(\hat{\Gamma}_{n,d}^{\epsilon})_{11}}}
,\quad
\hat Z_{\xi}
\coloneqq \frac{\sqrt{n}(\hat{\xi}-\xi_0)}{\sqrt{(\hat{\Gamma}_{n,d}^{\epsilon})_{22}}}
\]
should both be approximately standard normal. 
We repeat this procedure $1{,}000$ times. 
The grid is given by
$t_i=i/n$, $i=0,\ldots,n$, $n=10{,}000$,
and
$T_j=j/d$, $j=0,\ldots,d$, $d\asymp n^{0.95}$.
\begin{figure}[!tbp]
\centering
\includegraphics[width=\textwidth]{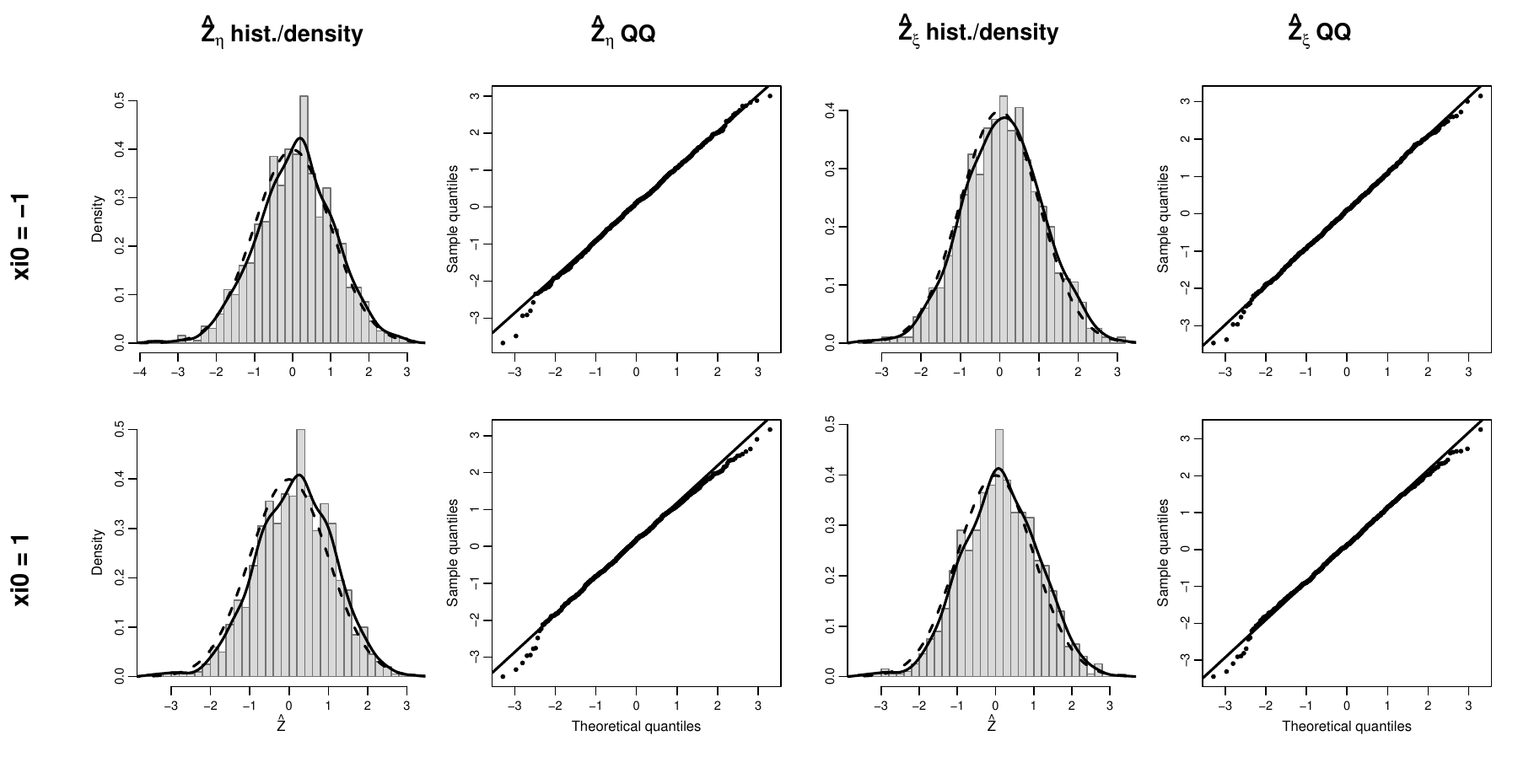}
\caption{
Studentized Monte Carlo diagnostics for \(k_1\) over 1,000 runs: $n=10{,}000$, $d\asymp n^{0.95}$, and $\epsilon = 10^{-3}$.
}
\label{fig:mc_k1}
\end{figure}

\begin{figure}[!tbp]
\centering
\includegraphics[width=\textwidth]{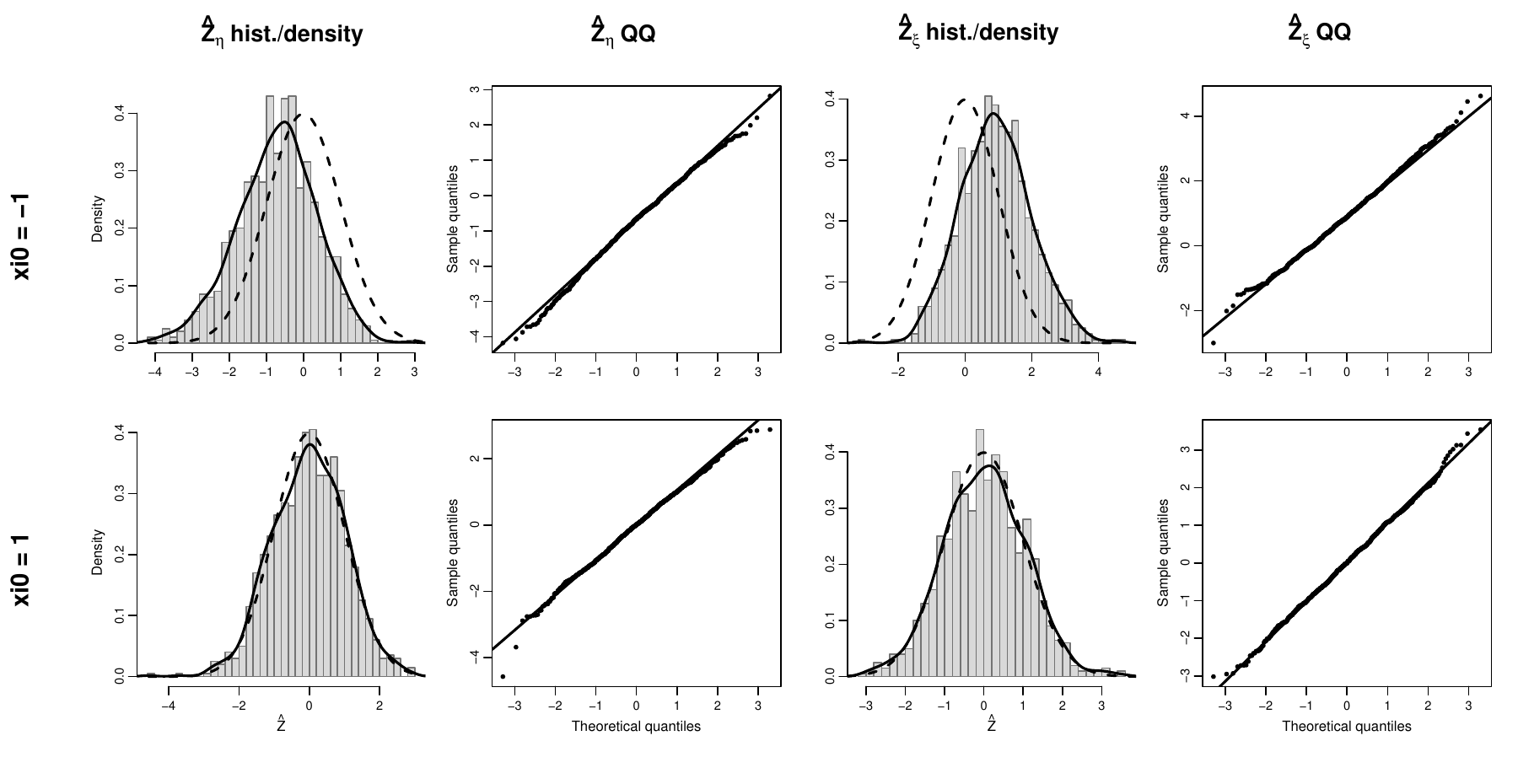}
\caption{
Studentized Monte Carlo diagnostics for \(k_2\) over 1,000 runs: $n=10{,}000$, $d\asymp n^{0.95}$, and $\epsilon = 10^{-3}$.
}
\label{fig:mc_k2}
\end{figure}
Figures~\ref{fig:mc_k1} and~\ref{fig:mc_k2} report the simulation results for the exponential and shifted power-law kernels, respectively. 
In each figure, the left panel reports the histogram and kernel density estimate of the corresponding studentized statistic, together with the standard normal density shown as a dashed line. 
The right panel reports the QQ plot against the standard normal distribution.

For the exponential kernel \(k_1\), the studentized statistics are close to the standard normal benchmark. 
The densities are centered around zero and the QQ plots are almost linear, especially in the central part of the distribution. 
Only mild deviations are observed in the tails. 
These results provide finite-sample support for the asymptotic normality result.

For the shifted power-law kernel \(k_2\), the approximation is somewhat less accurate than for the exponential kernel. 
In particular, when \(\xi_0=-1\), the QQ plots remain approximately linear, but the density plots indicate a visible shift relative to the standard normal benchmark. 
This suggests that the studentized statistics are close to normal in shape, but exhibit a finite-sample location bias.  
When \(\xi_0=1\), the approximation is considerably better, with the QQ plots showing a good fit in the central region. 

Overall, the simulations are broadly consistent with the asymptotic normality result. 
At the same time, they show that the quality of the finite-sample approximation depends on the kernel and the parameter region, with the shifted power-law kernel under \(\xi_0=-1\) exhibiting the most visible centering error.

\FloatBarrier
\subsection{Empirical Estimation of the Path-Dependent Model}
We now apply the proposed method to SPXW option time-series data. 
The sample covers options traded from January to the end of December 2021, with maturities restricted to the same period, so that both the trading-date and maturity dimensions span one year.

Following \cite{fukasawa-murayama}, we compute discrete observations of the cumulative forward variance 
\(\{I_{t_i}^{T_j}\}\) from the option data, where missing values are imputed by PCHIP interpolation across maturities for each fixed \(t_i\). We then use these observations as inputs for the proposed estimation procedure. We take the path-dependent model~\eqref{OWEIrngodfigmoAIBNoisdfbgoiAOINSdf} as the estimation target, with the kernel specified as
$k(t)=\eta(t+0.01)^{H-1/2}$, $\eta>0$, $H\in\mathbb{R}$.
We jointly estimate the kernel parameters \((\eta,H)\).
\small
\begin{table}[!tbp]
\centering
\caption{Empirical estimates of the path-dependent model for different values of \(\epsilon\).}
\label{tab:empirical_joint_estimation}
\begin{tabular}{cccc}
\toprule
\(\epsilon\) & \(\hat{\eta}\) & \(\hat{H}\) & 95\% CI for \(H\) \\
\midrule
\(10^{-2}\) & 0.102 & -0.859  & [-1.946, 0.228] \\
\(10^{-3}\) & 0.031 & -1.259  & [-1.906, -0.611] \\
\(10^{-4}\) & 0.104 & -0.789 & [-1.147, -0.431] \\
\(10^{-5}\) & 0.265 & -0.375 & [-0.607, -0.142] \\
\bottomrule
\end{tabular}
\end{table}
Table~\ref{tab:empirical_joint_estimation} reports the estimation results for 
\(\epsilon\in\{10^{-2},10^{-3},10^{-4},10^{-5}\}\).
For all values of \(\epsilon\), the estimate of \(H\) is negative. 
This is in line with the empirical findings of \cite{AbiJaberLi2025}, who report a negative calibrated value of \(H\) for a path-dependent specification.
At the same time, the estimates show some sensitivity to the choice of \(\epsilon\). 
In particular, for \(\epsilon=10^{-3}\), the estimate of the scale parameter is 
\(\hat{\eta}=0.031\), which is substantially smaller than the estimates obtained for the other values of \(\epsilon\). 
The confidence intervals for \(H\) are also relatively wide in some cases. 

\FloatBarrier
\subsection{Instability of the Joint Estimation}
The empirical results in the previous subsection suggest that the joint estimation of 
\((\eta,H)\) is sensitive to the choice of \(\epsilon\). 
These observations indicate that the simultaneous estimation of the scale and shape parameters may be unstable in the present empirical design.

To examine whether this instability is specific to the observed sample or reflects a more systematic finite-sample feature, we conduct a Monte Carlo experiment calibrated to the empirical setting. 
To construct the simulated samples, we start from the observed discrete cumulative forward variance 
\(\{I_{t_i}^{T_j}\}\), computed from the option data. 
We recover the corresponding initial forward variance curve \(V_0^{\cdot}\) and then reconstruct 
\(\{I_{t_i}^{T_j}\}\) using the theoretical formula~\eqref{IWurgnsitugbiUQNIUBAiunsifgubW}--\eqref{OEIRgndosfihgmfdoghinoIWMOSIngodisghoiNSOIN}. 
This construction preserves the main time-to-maturity structure of the empirical data. 
The true scale parameter is fixed at \(\eta_0=0.1\), which is of the same order as the empirical scale estimates and serves as a representative normalization for this diagnostic experiment. 
We consider several true values of \(H\), namely \(H_0\in\{-1.2,-0.8,-0.4\}\), and jointly estimate \((\eta,H)\) for 
\(\epsilon\in\{10^{-2},10^{-3},10^{-4},10^{-5}\}\). 
Tables~\ref{tab:eps_H_minus_1_200}--\ref{tab:eps_H_minus_0_400} report the results.
In these tables, \(\hat{Z}\) denotes the studentized statistic for the estimator of \(H\), and \(\hat{H}_{\mathrm{low}}\) and \(\hat{H}_{\mathrm{up}}\) denote the lower and upper endpoints of the two-sided 95\% confidence interval for \(H\), respectively. 
\small
\begin{table}[!tbp]
\caption{\label{tab:eps_H_minus_1_200}Monte Carlo performance of the joint estimator under \(\eta_0=0.1\) and \(H_0=-1.2\).}
\centering
\begin{tabular}{ccrrrrrr}
\toprule
$\epsilon$ & $E[\hat{\eta}]$ & $\hat{H}_{Bias}$ & $\hat{H}_{RMSE}$ & $E[\hat{Z}]$ & $\mathrm{Var}(\hat{Z})$ & $P(\hat{H}_{\mathrm{low}} \le H \le \hat{H}_{\mathrm{up}})$ \\
\midrule
$10^{-2}$ & 47.74 & 4.86 & 19.21 & -1.24 & 1504.6 & 0.385 \\
$10^{-3}$ & 10.00 & 0.92 & 3.10 & -2.05 & 1465.8 & 0.220 \\
$10^{-4}$ & 0.53 & 0.48 & 0.91 & -0.76 & 2189.2 & 0.101 \\
$10^{-5}$ & 0.36 & 0.69 & 0.79 & 5.09 & 15.0 & 0.022 \\
\bottomrule
\end{tabular}
\end{table}
\small
\begin{table}[!tbp]

\caption{\label{tab:eps_H_minus_0_800}Monte Carlo performance of the joint estimator under \(\eta_0=0.1\) and \(H_0=-0.8\).}
\centering
\begin{tabular}{ccrrrrrr}
\toprule
$\epsilon$ & $E[\hat{\eta}]$ & $\hat{H}_{Bias}$ & $\hat{H}_{RMSE}$ & $E[\hat{Z}]$ & $\mathrm{Var}(\hat{Z})$ & $P(\hat{H}_{\mathrm{low}} \le H \le \hat{H}_{\mathrm{up}})$\\
\midrule
$10^{-2}$ & 24.21 & 5.91 & 29.26 & -0.23 & 171.4 & 0.697 \\
$10^{-3}$ & 5.80 & 1.51 & 5.49 & 0.52 & 17.8 & 0.473 \\
$10^{-4}$ & 0.42 & 0.53 & 0.90 & 2.29 & 10.3 & 0.207 \\
$10^{-5}$ & 0.57 & 0.99 & 1.05 & 3.44 & 2.1 & 0.093 \\
\bottomrule
\end{tabular}
\end{table}
\small
\begin{table}[!tbp]
\caption{\label{tab:eps_H_minus_0_400}Monte Carlo performance of the joint estimator under \(\eta_0=0.1\) and \(H_0=-0.4\).}
\centering
\begin{tabular}{ccrrrrrr}
\toprule
$\epsilon$ & $E[\hat{\eta}]$ & $\hat{H}_{Bias}$ & $\hat{H}_{RMSE}$ & $E[\hat{Z}]$ & $\mathrm{Var}(\hat{Z})$ & $P(\hat{H}_{\mathrm{low}} \le H \le \hat{H}_{\mathrm{up}})$ \\
\midrule
$10^{-2}$ & 3.16 & 2.67 & 34.51 & -2.078 & 205.1 & 0.828 \\
$10^{-3}$ & 0.32 & 0.51 & 0.83 & -0.49 & 37.5 & 0.825 \\
$10^{-4}$ & 0.37 & 0.75 & 0.83 & 1.07 & 1.4 & 0.832 \\
$10^{-5}$ & 0.63 & 1.06 & 1.06 & 1.64 & 0.0 & 1.000 \\
\bottomrule
\end{tabular}
\end{table}
The simulation results show substantial instability of the joint estimator in this empirically calibrated design. 
The estimate of the scale parameter varies considerably across values of \(\epsilon\), even though the true value is fixed at \(\eta_0=0.1\). 
The bias and RMSE of \(\hat H\) are also large in several cases. 
The studentized statistics deviate substantially from the standard normal benchmark, as reflected in their means and variances. 
Moreover, the empirical coverage probabilities of the confidence intervals for \(H\) are far from the nominal level.

Overall, these results do not formally establish weak identification. 
However, they point to a practical difficulty in separating the scale and shape directions in the present empirical design. 

\FloatBarrier
\subsection{Fixed-\texorpdfstring{$\eta$}{eta} Estimation}
 As a complementary analysis, we therefore consider the estimation of \(H\) with the scale parameter fixed. 
Throughout this subsection, we fix \(\eta=0.1\), which is of the same order as the empirical scale estimates obtained above.

A remaining practical issue is the choice of the tuning parameter \(\epsilon\). 
To select a reasonable value, we use the same empirically calibrated Monte Carlo design as in the previous subsection, but estimate only \(H\) while keeping \(\eta=0.1\) fixed. 
We consider the same true values \(H_0\in\{-1.2,-0.8,-0.4\}\) and compare 
\(\epsilon\in\{10^{-2},10^{-3},10^{-4},10^{-5}\}\). 
Tables~\ref{tab:fixed_eta_H_minus_1200}--\ref{tab:fixed_eta_H_minus_0400} report the results.
The fixed-\(\eta\) estimator is substantially more stable than the joint estimator. 
Across the three values of \(H_0\), the bias and RMSE of \(\hat H\) are much smaller than those obtained in the joint estimation experiment. 
The studentized statistics are also closer to the standard normal benchmark, and the empirical coverage probabilities are more reasonable for moderate choices of \(\epsilon\). 
Among the values considered, \(\epsilon=10^{-3}\) provides the most balanced performance across the three designs: it yields small bias and RMSE, while avoiding the severe deterioration observed for very small values of \(\epsilon\), such as \(10^{-5}\).
\small
\begin{table}[!tbp]

\caption{\label{tab:fixed_eta_H_minus_1200}Monte Carlo performance of the fixed-\(\eta \) estimator under \(H_0=-1.2\).}
\centering
\begin{tabular}{ccrrrrrr}
\toprule
$\epsilon$  & $\hat{H}_{Bias}$ & $\hat{H}_{RMSE}$ & $E[\hat{Z}]$ & $\mathrm{Var}(\hat{Z})$ & $P(\hat{H}_{\mathrm{low}} \le H \le \hat{H}_{\mathrm{up}})$ \\
\midrule
$10^{-2}$ & 0.08 & 0.15 & 0.54 & 1.79 & 0.910 \\
$10^{-3}$ & -0.01 & 0.08 & -0.23 & 2.81 & 0.858 \\
$10^{-4}$ & -0.04 & 0.08 & -0.91 & 2.94 & 0.808 \\
$10^{-5}$ & -0.11 & 0.12 & -2.76 & 2.14 & 0.278 \\
\bottomrule
\end{tabular}
\end{table}
\small
\begin{table}[!tbp]

\caption{\label{tab:fixed_eta_H_minus_8}Monte Carlo performance of the fixed-\(\eta\) estimator under \(H_0=-0.8\).}
\centering
\begin{tabular}{ccrrrrrr}
\toprule
$\epsilon$  & $\hat{H}_{Bias}$ & $\hat{H}_{RMSE}$ & $E[\hat{Z}]$ & $\mathrm{Var}(\hat{Z})$ & $P(\hat{H}_{\mathrm{low}} \le H \le \hat{H}_{\mathrm{up}})$ \\
\midrule
$10^{-2}$  & 0.02 & 0.08 & 0.26 & 0.67 & 0.973 \\
$10^{-3}$  & -0.01 & 0.06 & -0.12 & 0.90 & 0.952 \\
$10^{-4}$  & -0.04 & 0.06 & -0.95 & 1.10 & 0.845 \\
$10^{-5}$  & -0.16 & 0.17 & -4.18 & 1.64 & 0.028 \\
\bottomrule
\end{tabular}
\end{table}
\small
\begin{table}[!tbp]

\caption{\label{tab:fixed_eta_H_minus_0400}Monte Carlo performance of the fixed-\(\eta\) estimator under \(H_0=-0.4\).}
\centering
\begin{tabular}{ccrrrrrr}
\toprule
$\epsilon$ & $\hat{H}_{Bias}$ & $\hat{H}_{RMSE}$ & $E[\hat{Z}]$ & $\mathrm{Var}(\hat{Z})$ & $P(\hat{H}_{\mathrm{low}} \le H \le \hat{H}_{\mathrm{up}})$ \\
\midrule
$10^{-2}$  & -0.09 & 0.10 & -1.12 & 0.50 & 0.926 \\
$10^{-3}$  & -0.05 & 0.08 & -0.75 & 0.71 & 0.925 \\
$10^{-4}$  & -0.09 & 0.10 & -1.81 & 0.64 & 0.567\\
$10^{-5}$ & -0.30 & 0.30 & -7.21 & 1.33 & 0.000 \\
\bottomrule
\end{tabular}
\end{table}

Based on this comparison, we use \(\epsilon=10^{-3}\) for the fixed-\(\eta\) empirical estimation. 
As an additional diagnostic check, 
\begin{figure}[!tbp]

\centering

\includegraphics[width=0.9\textwidth]{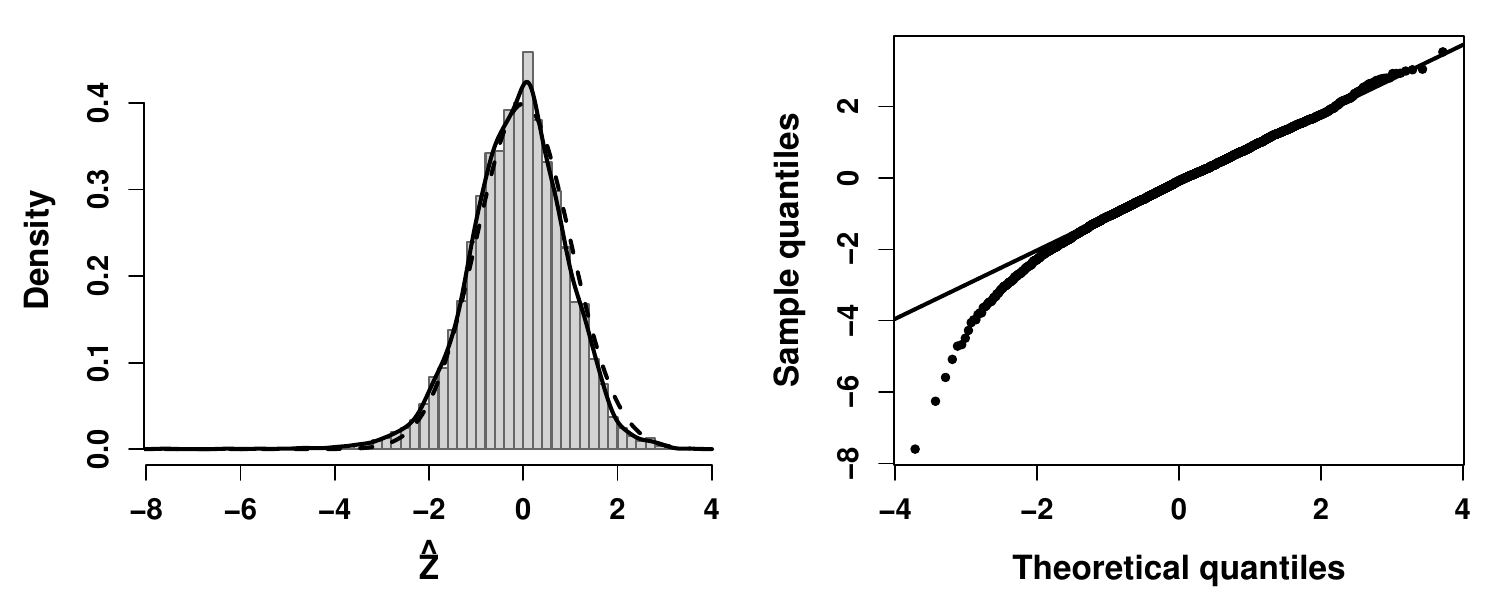}

\caption{Density and QQ plots of the studentized fixed-$\eta$ estimator under \(H_0=-0.8\) and \(\epsilon=10^{-3}\), based on 5,000 empirically calibrated Monte Carlo replications.}
\label{fig:fixed_eta_studentized}
\end{figure}
Figure~\ref{fig:fixed_eta_studentized} reports the density and QQ plots of the studentized fixed-\(\eta\) estimator of \(H\), based on 5000 Monte Carlo replications with \(\eta=0.1\), \(H_0=-0.8\), and \(\epsilon=10^{-3}\). 
The normal approximation is reasonably accurate around the center of the distribution, although the QQ plot shows a heavier left tail. 
Overall, the fixed-\(\eta\) inference is substantially more stable than the joint inference, although the normal approximation is not perfect in the tails. 

Finally, we estimate \(H\) from the SPXW data with \(\eta\) fixed at 0.1 and \(\epsilon=10^{-3}\). 
The resulting estimate is
$\hat H = -0.855$,
with the two-sided 95\% confidence interval
$[-0.906, -0.803]$.
The estimate is negative and is in line with the sign obtained from the joint estimation results, while the confidence interval is substantially narrower. 
This suggests that fixing the scale parameter provides a more stable inference on the shape parameter \(H\) in the present empirical design.

Overall, the simulation and empirical results lead to two main conclusions. 
First, the proposed studentization is broadly consistent with the asymptotic normality theory, although finite-sample centering errors can appear for some kernel specifications. 
Second, in the empirical SPXW application, the joint estimation of the scale and shape parameters appears practically unstable, while the fixed-scale analysis provides more stable inference on \(H\). 
Across the specifications considered, the empirical estimates point to a negative value of \(H\), which is in line with previous empirical evidence on path-dependent specifications.

\bibliographystyle{plain}
\bibliography{mybib}

\end{document}